\newtheorem{remark}{Remark}[section]
\newcommand{\al}{\alpha}
\newcommand{\fy}{\varphi}
\renewcommand{\d}{{\rm d}}
\def\Dal{{\partial_t^\al}}
\def\Om{\Omega}
\def\II{(\Om)}
\def\dH#1{\dot H^{#1}(\Omega)}
\def\bDal{\bar\partial_\tau^\alpha}
\def \contour{{\Gamma_{\theta,\sigma}}}
\def\L2Om{{L^2(\Omega)}}
\def\FF{\mathcal{F}}
\def\t#1{\tilde{#1}}
\def \E{E}
\def\GG{\mathcal{G}}
\def\tu{\tilde{u}}
\def\tud{\tilde{u}^\delta}
\def\I{\mathcal{I}}
\def \vecfunc#1#2{\begin{bmatrix}
#1(#2)\\
\partial_t #1(#2)
\end{bmatrix}}
\title{Backward diffusion-wave problem: stability, regularization and approximation}
\author{Zhengqi Zhang\thanks{Department of Applied Mathematics, The Hong Kong Polytechnic University, Hung Hom, Hong Kong. Email address: zhengqi.zhang@connect.polyu.hk}
\and Zhi Zhou\thanks{Department of Applied Mathematics, The Hong Kong Polytechnic University, Hung Hom, Hong Kong. Email address: zhizhou@polyu.edu.hk}}
\date{\today}
\begin{document}

\maketitle

\setlength\abovedisplayskip{5pt}
\setlength\belowdisplayskip{5pt}

\begin{abstract}
We aim at the development and analysis of the numerical schemes for approximately solving the backward diffusion-wave problem,
which involves a fractional derivative in time with order $\alpha\in(1,2)$.
From terminal observations at two time levels, i.e., $u(T_1)$ and $u(T_2)$,
we simultaneously recover two initial data $u(0)$ and $u_t(0)$ and hence the solution $u(t)$ for all $t > 0$.
First of all, existence, uniqueness and Lipschitz stability of the backward diffusion-wave problem were established
under some conditions about $T_1$ and $T_2$.
Moreover, for noisy data, we propose a quasi-boundary value scheme to regularize the "mildly" ill-posed problem,
and show the convergence of the regularized solution.
Next, to numerically solve the regularized problem, a fully discrete scheme is proposed
by applying finite element method in space and convolution quadrature in time.
We establish error bounds of the discrete solution in both cases of smooth and nonsmooth data.
The error estimate is very useful in practice since it indicates the way
to choose discretization parameters and regularization parameter, according to the noise level.
The theoretical results are supported by numerical experiments.
\end{abstract}

\begin{keywords}
backward diffusion-wave,  stability,  regularization,
fully discretization, error estimate
\end{keywords}

\begin{AMS}
Primary:  65M32, 35R11.
\end{AMS}

\pagestyle{myheadings}
\thispagestyle{plain}
\section{Introduction}
Let $\Omega\subset\mathbb{R}^d $ ($d=1,2,3$) be a
convex polyhedral domain
with boundary $\partial\Omega$. We consider
the following initial-boundary value problem of diffusion-wave equation with $\alpha\in(1,2)$
\begin{align}\label{eqn:fde-0}
\begin{aligned}
\Dal u -\Delta u & = f,\ &&\text{in}\ \Omega\times(0,T],\\
u &= 0,\ &&\text{on}\ \partial\Omega, \\
u(0) = a,~\partial_t u(0) &= b,\ &&\text{in}\ \Omega, \\
\end{aligned}
\end{align}
where $T>0$ is a fixed final time, $f \in L^\infty(0,T;L^2(\Omega))$ and $a,b\in L^2(\Omega)$ are given
source term and initial data, respectively, and $\Delta$ denotes the Laplace operator in space. Here
$\Dal u(t)$ denotes the Caputo fractional derivative in time $t$ of order $\alpha\in(1,2)$ \cite[p. 70]{KilbasSrivastavaTrujillo:2006}
\begin{align*}
   \Dal u(t)= \frac{1}{\Gamma(2-\alpha)}\int_0^t(t-s)^{1-\alpha} \tfrac{\partial^2}{\partial s^2} u(s)\d s.
\end{align*}

In recent years, there has been a growing interest in fractional / nonlocal models
due to their diverse applications in physics, engineering, biology
and finance. Specifically, the time-fractional diffusion equations ($\alpha\in(0,1)$) are often used to model subdiffusion phenomena
in media with highly
heterogeneous aquifers \cite{Adams:1992, Hatano:1998} and fractal geometry \cite{Nigmatullin:1986},
while the time-fractional diffusion-wave equations \eqref{eqn:fde-0}  ($\alpha\in(1,2)$)  are frequently used to describe
the propagation of mechanical waves in viscoelastic media \cite{Mainardi:1996, Mainardi:book}.
We refer interested readers to \cite{Metzler:2014,MetzlerKlafter:2000}
for a long list of applications of fractional models arising from biology and physics.

Inverse problems for fractional evolution models have attracted much interest, and there has already been a vast literature; see e.g.,
review papers \cite{JinRundell:2015, LiLiuYamamoto:2019b, LiYamamoto:2019a, LiuLiYamamoto:2019c} and references therein.
The aim of this paper is to investigate the backward problem for the diffusion-wave model \eqref{eqn:fde-0}: \textbf{(IP)}
we simultaneously determine the initial data $u(x,0)$ and $u_t(x,0)$ with $x\in \Omega$ (and hence the function
$u(x,t)$ for all $(x,t)\in \Omega\times(0,T)$)
from two terminal
observations
\begin{equation*}
u(x,T_1)=g_1(x), \quad u(x,T_2) = g_2(x)\quad \text{for all}  ~~x\in\Omega,
\end{equation*}
where $T_1, T_2\in(0,T]$ and $T_1<T_2$.

The study on the backward problem
for the diffusion-wave model remains fairly scarce.
In \cite{WeiZhang:2018} Wei and Zhang studied the backward problem to recover a single initial condition $u(0)$
or $u_t(0)$ (with the other one known) from the single terminal data $u(T)$.
Floridia and Yamamoto analyzed the simutaneous recovery of two initial data
from two terminal observations $u(T)$ and $u_t(T)$, and established a Lipschitz stability
in \cite{FloridiaYamamoto:2020}. In the setting of current paper,
we consider two observations $u(T_1)$ and $u(T_2)$,
which are practical in many empirical experiments.
As far as we know, there is no rigorous analysis of the discretized (numerical) scheme for
solving the backward problem (IP)
where some regularization error and discretization error(s) will be introduced into the system.
Then there arises a natural question: is it possible to derive an a priori error estimate, showing the way to
to balance discretization error, regularization parameter and the noise?
However, such an analysis remains unavailable, and it is precisely this gap that the project aims to fill in.

The backward subdiffusion problem ($\alpha\in(0,1)$) has been intensively studied in recent years, where
the single initial condition $u(0)$ is determined from the single observation $u(T)$.
See e.g. \cite{SakamotoYamamoto:2011,Tuan:2020} for the uniqueness and some stability estimate,
\cite{LiuYamamoto:2010,WangLiu:TV,YangLiu:2013,WeiWang:2014}
for some regularization methods, and \cite{ZhangZhou:2020} for error analysis of fully discrete schemes.
Compared with the subdiffusion problem, the initial layer near $t=0$ is more singular for the diffusion wave model, in sense that
$$  \|  u(t)  \|_{\dH 2} + \|  \partial_t^\alpha u(t)  \|_{L^2\II} \le c t^{-\alpha} \Big(\|  u(0) \|_{L^2\II} + t \|  u_t(0) \|_{L^2\II}\Big), $$
that clearly indicates a stronger singularity for $\alpha\rightarrow2$.
This brings more challenges in both numerical approximation and analysis.
Besides, the mathematical and numerical analysis of the backward subdiffusion problem
heavily rely on the completely monotonicity of the Mittag--Leffler function $E_{\alpha,1}(-z)$ or its discrete analogue,
which is not valid for $\alpha\in (1,2)$. Therefore, the well-posedness of the backward problem requires additional conditions
on the terminal time levels $T_1$ and $T_2$. This contrasts sharply with the subdiffusin counterpart  ($\alpha\in(0,1)$),
where the existence, uniqueness and two-side Lipschitz stability hold valid for any observation $u(T)$ with $T>0$.

In the first part of this paper, we show the well-posedness of the backward diffusion-wave problem.
In particular, using the asymptotic behavior of Mittag-Leffler functions, we show that,
under some conditions on $T_1$ and $T_2$ (depending on the spectrum of $-\Delta$),
 for any
$g_1, g_2\in \dH2$, there exists $a,b\in \L2Om$ such that the solution $u$ to \eqref{eqn:fde} satisfies
$u(T_1) = g_1$ and $u(T_2) = g_2$, and there holds two-sided Lipschitz stability (Theorem \ref{thm:stab})
\begin{equation*}
c_1\Big(\|g_1\|_{\dH 2}+\|g_2\|_{\dH 2}\Big) \le \|a\|_\L2Om+\|b\|_\L2Om\le c_2\Big(\|g_1\|_{\dH 2}+\|g_2\|_{\dH 2}\Big).
\end{equation*}
where the constants $c_1$ and $c_2$ only depend on $T_1$, $T_2$ and the fractional order $\alpha$.
In practice, we assume that the observation data $g_1^\delta$ and $g_2^\delta$ are noisy in sense that
\begin{align}\label{eqn:noise}
\|g_1-g_1^\delta\|_{L^2\II} = \|g_2-g_2^\delta\|_{L^2\II} = \delta.
\end{align}
Note that the empirical observations $g_1^\delta$ and $g_2^\delta$ only belong to $L^2\II$.
In order to regularize the mildly ill-posed problem, we apply the quasi-boundary value method \cite{Liu:2019, YangLiu:2013}: find $\tud(t)$ satisfies
\begin{align}\label{eqn:back-2}
\begin{aligned}
\Dal \tud -\Delta \tud & = 0,&& \text{in}\ \Omega\times(0,T],\\
\tud &= 0,&& \text{on}\ \partial\Omega,\\
-\gamma \tud(0)+\tud(T_1) &= g_1^\delta,&& \text{in}\ \Omega, \\
\gamma \partial_t \tud(0)+\tud(T_2) &= g_2^\delta, &&\text{in}\ \Omega,
\end{aligned}
\end{align}
where the constant $\gamma>0$ denotes the regularization parameter.
In Theorem \ref{thm:est-tud-u},
we show that if $T_1$ and $T_2$ sufficiently large and $a,b\in L^2\II$ then there holds
\begin{equation*}
 \|(\tud-u)(0)\|_{\L2Om}  + \|\partial_t(\tud-u)(0)\|_{\dH {-s}}\rightarrow
 0\quad \text{for}~~ \delta,\gamma\rightarrow0, \frac{\delta}{\gamma}\rightarrow0.
 \end{equation*}
 with any $s\in(0,1]$. Moreover, if $a,b\in \dH q$ with $q\in[0,2]$, we have the following \textsl{a priori} estimate
\begin{equation*}\begin{aligned}
\|(\tud-u)(0)\|_{L^2(\Omega)} +
\|\partial_t (\tud-u)(0)\|_{L^2(\Omega)}
&\le c\Big(\gamma^\frac q2+\delta\gamma^{-1}\Big)  .
\end{aligned}\end{equation*}
and for all $t\in(0,T]$
\begin{equation*}\begin{aligned}
\|(\tud-u)(t)\|_{L^2(\Omega)}  &\le c\big(\gamma  \min(\gamma^{-(1-\frac{q}{2})},t^{-\alpha(1-\frac{q}{2})}) + \delta
\min(\gamma^{-1},t^{-\alpha}) \big).
\end{aligned}\end{equation*}
To approximate $u(t)$ with $t>0$, the above estimates indicate the optimal choice of regularized parameter $\gamma\sim \delta$,
then the corresponding error is of order $O(\delta)$,
which is independent of the smoothness of initial data. Meanwhile, for $t=0$, the choice $\gamma\sim \delta^{\frac{2}{q+2}}$ leads to the optimal approximation $O(\delta^\frac{q}{q+2})$
if $a,b\in \dH q$ with $q\in(0,2]$.
These results will be intensively used in the  error estimation of (discretized) numerical schemes.
The proof mainly relies on the asymptotic behaviors of Mittag--Leffler functions. 

The second contribution of this paper is to develop a discrete numerical schemes for
solving the backward diffusion-wave problem with provable error bound.
The literature on the numerical approximation for the direct problems of time-fractional
models is vast. The most popular methods include
convolution quadrature \cite{CuestaLubichPalencia:2006,JinLiZhou:2017sisc,BanjaiLopez:2019,Fischer:2019},
collocation-type method \cite{ZhuXu:2019, Stynes:2017, Liao:2018, Kopteva:2019, Kopteva:2020, Kopteva:2021},
discontinuous Galerkin method
\cite{MustaphaAbdallahFurati:2014, McLeanMustapha:2015},
and spectral method \cite{Chen:2020, HouXu:2017, ZayernouriKarniadakis:2013}.
See also \cite{Baffet:2017,
JiangZhang:2017, Lubich:2008, XuHesthavenChen:2015}
for some fast algorithms.
Specifically, in this work,
we discretize the regularized
problem \eqref{eqn:back-2} by applying piecewise linear finite element method (FEM) in space with mesh size $h$,
and convolution quadrature generated by backward Euler scheme
(CQ-BE) in time with time step size $\tau$. Then some discretization error will be introduced into the system.
We carefully establish some
error bounds for the proposed scheme and
specify the way to balance the discrization error, regularization parameter and noise level.
For example, we show the following error estimates.
Suppose that $u(t)$ is the exact solution of the backward diffusion-wave problem
and $\t U_n^\delta$ is the fully discrete solution (approximating $u(t_n)$ at time level $t_n = n\tau$),
$\t a_{h,\tau}^\delta$ and $\t b_{h,\tau}^\delta$ are the approximations to exact initial data $a$ and $b$, respectively. Then,
provided that $a,b\in L^2\II$ and $T_1$ and $T_2$ are sufficiently large (depending on the smallest eigenvalue of $-\Delta$),  for arbitrarily small $s\in(0,1]$, we have (Theorem \ref{thm:err-fully})
\begin{equation*}
 \| \tilde a_{h,\tau} ^\delta- a \|_{L^2\II}   +  \| \tilde b_{h,\tau}^\delta - b \|_{H^{-s}\II}   \rightarrow0,\qquad \text{as}~~
\gamma, \tau \rightarrow0,~ \frac{\delta}{\gamma} \rightarrow0, ~ \frac{h}{\gamma} \rightarrow0,
\end{equation*}
and for $n\ge 1$
\begin{equation*}
\|  \tilde U_n^\delta-u(t_n)  \|_{L^2\II}
\le c \Big[\gamma t_n^{-\alpha}+ (\tau t_n^{\alpha-1}+ h^2 +\delta \big) \min(\gamma^{-1}, t_n^{-\alpha}) \Big].
\end{equation*}
Here the constant $c$ may depend on $T_1$, $T_2$, $T$, $a$ and $b$, but is always independent of $h$, $\gamma$, $\delta$ and $t_n$.
This estimate is useful since it indicates the way to balance parameters $\gamma$, $h$, $\tau$ according to $\delta$.
The estimates could be further improved if the initial data $a$ and $b$ are more regular and compatible with the boundary condition.
In particular, if $a,b\in \dH q$ with $q\in(0,2]$, there holds
\begin{equation*}
 \| \tilde a_{h,\tau}^\delta- a \|_{L^2\II}   +  \| \tilde b_{h,\tau}^\delta - b \|_{L^2\II}
\le c \big(\gamma^{\frac{q}{2}} + \tau + (h^2+\delta)\gamma^{-1}\big)   .
\end{equation*}
and for $n\ge 1$
\begin{equation*}
\|  \tilde U_n^\delta-u(t_n)  \|_{L^2\II}
\le c \Big[\gamma \min(\gamma^{-(1-\frac q2)}, t_n^{-(1-\frac q2)\alpha})+ (\tau t_n^{\alpha-1}+ h^2 +\delta \big) \min(\gamma^{-1}, t_n^{-\alpha}) \Big].
\end{equation*}
The proof relies heavily on refined properties of (discrete) solution operators
and some non-standard error estimates for the direct problem
in terms of problem data regularity \cite{JinLazarovZhou:SISC2016}.
As far as we know, this is the first work providing rigorous error analysis of numerical methods
for solving the  backward diffusion-wave problem.
Note that the above error estimates are much sharper than the
ones stated in the early work, see e.g., \cite[Theorem 4.1]{ZhangZhou:2020}, for backward subdiffusion problem.
Moreover, the analysis in current work only requires the domain to be convex polygonal while in \cite[Section 4]{ZhangZhou:2020} we assume the
the boundary of $\Omega$ is sufficiently smooth (since we required $H^4\II$-regularity for smooth data).
The sharpness of the error estimates are fully examined by the numerical experiments.

The rest of the paper is organized as follows. In section \ref{sec:prelim}, we provide some preliminary
results about solution representation and asymptotic behaviors of Mittag--Leffler functions.
Stability and regularization for the inverse problem are introduced in Section \ref{sec:stab}.
Then in sections \ref{sec:semi} and \ref{sec:fully}, we propose and analyze
spatially semi-discrete scheme and space-time fully discrete scheme, respectively.
Finally, in section \ref{sec:numerics}, we
present some numerical examples to illustrate and complete the theoretical analysis.
The notation $c$ denotes a generic constant, which may change at each occurrence, but it is always independent
of the noise level $\delta$, the regularization parameter $\gamma$, the mesh size $h$ and time step $\tau$ etc.

\section{Preliminaries}\label{sec:prelim}
In this section, we shall present some preliminary results about the diffusion-wave equation \eqref{eqn:fde-0},
including Mittag-Leffler functions, solution representation, and solution regularity.

In our analysis, a class of special functions, called Mittag--Leffler functions, play an 
important role. The Mittag--Leffler functions are defined by the following power series
\begin{equation*}
E_{\alpha,\beta}(z) = \sum_{k=0}^\infty  \frac{z^k}{\Gamma(k\alpha+\beta)} \qquad \text{for all}~~z\in \mathbb{C}.
\end{equation*}

Then the next lemma provides some useful bounds and asymptotic behaviors for Mittag-Leffler functions.
See detailed proof in \cite[p. 35]{Podlubny1999} and \cite[Theorem 3.2]{Jin:book}.
\begin{lemma}
\label{lem:ml}
Assume that  $0<\alpha<2$ and $\beta>0$. The Mittag-Leffler function $E_{\alpha,\beta}(z)$ is an entire function.
Meanwhile, there exists a positive constant $c$ (depending on $\alpha$ and $\beta$) such that
\begin{equation}\label{bound-ml}
|E_{\alpha,\beta}(-z)|\le \frac{c}{1+z},\ \text{for all}\ z\ge 0.
\end{equation}
Moreover, for large $z$, there holds the following asymptotic behaviours
\begin{equation}\label{asymp-ml}
\begin{aligned}
E_{\alpha,1}(-z) &= \frac{1}{\Gamma(1-\alpha)}\frac{1}{z}+O(\frac{1}{z^2})\quad \text{and}
\quad E_{\alpha,2}(-z) = \frac{1}{\Gamma(2-\alpha)}\frac{1}{z}+O(\frac{1}{z^2})\quad \forall z\to\infty.
\end{aligned}
\end{equation}
\end{lemma}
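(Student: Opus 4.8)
The plan is to establish the three assertions in turn, with the algebraic asymptotic expansion \eqref{asymp-ml} doing most of the work and the uniform bound \eqref{bound-ml} following from it by a compactness argument. First I would verify that $E_{\alpha,\beta}$ is entire directly from the defining power series: by Stirling's formula $\Gamma(k\alpha+\beta)$ grows factorially in $k$, so $\limsup_{k}|\Gamma(k\alpha+\beta)|^{-1/k}=0$ and the radius of convergence is infinite. This is routine and needs no contour machinery.

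The core step is the asymptotic expansion, and here I would pass from the power series to the Hankel-contour representation
\begin{equation*}
E_{\alpha,\beta}(z)=\frac{1}{2\pi\rmi}\int_{\mathcal H}\frac{\mu^{\alpha-\beta}\rme^\mu}{\mu^\alpha-z}\,\rmd\mu ,
\end{equation*}
where $\mathcal H$ is a Hankel contour encircling the origin and the singularity $\mu^\alpha=z$. For $z=-w$ with $w>0$ large, that singularity sits at argument $\pi$, and the decisive point is that for $0<\alpha<2$ this argument lies in the sector where the expansion of $E_{\alpha,\beta}$ is \emph{purely algebraic}, so no exponentially growing term $\tfrac1\alpha z^{(1-\beta)/\alpha}\rme^{z^{1/\alpha}}$ appears and $E_{\alpha,\beta}(-w)$ decays like $w^{-1}$. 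Expanding the kernel as the geometric series $(\mu^\alpha-z)^{-1}=-\sum_{k\ge0}\mu^{\alpha k}z^{-k-1}$ and integrating term by term against the reciprocal-Gamma representation $\tfrac{1}{2\pi\rmi}\int_{\mathcal H}\mu^{-s}\rme^\mu\,\rmd\mu=1/\Gamma(s)$ yields
\begin{equation*}
E_{\alpha,\beta}(-w)=-\sum_{k=1}^{N}\frac{(-w)^{-k}}{\Gamma(\beta-\alpha k)}+O(w^{-N-1}).
\end{equation*}
Taking $N=1$ and reading off the $k=1$ term with $\beta=1$ and $\beta=2$ gives exactly \eqref{asymp-ml}, since $(-w)^{-1}=-w^{-1}$ cancels the leading sign and leaves $w^{-1}/\Gamma(\beta-\alpha)$.

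I expect the main obstacle to be the rigorous justification of the term-by-term integration and the remainder bound $O(w^{-N-1})$: one must split $\mathcal H$ into the arc near the origin and the two rays toward $-\infty$, bound the tail of the geometric series uniformly along the contour, and keep $\mu^\alpha-z$ bounded away from zero. The delicate issue is that this must hold uniformly as $\arg z\to\pi$, i.e.\ exactly on the negative axis, and this is precisely where the restriction $0<\alpha<2$ enters: it allows one to choose a ray angle $\mu<\min\{\pi,\alpha\pi\}$ so that the negative axis ($\arg z=\pi$) falls in the regime $\mu\le|\arg z|\le\pi$ on which the exponential contribution is absent and only the algebraic expansion survives.

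Finally, the uniform bound \eqref{bound-ml} follows by combining the expansion with continuity. On any bounded interval $z\in[0,R]$ the entire function $E_{\alpha,\beta}(-z)$ is continuous, hence bounded by some $M$, while $(1+z)^{-1}\ge(1+R)^{-1}$, giving $|E_{\alpha,\beta}(-z)|\le M(1+R)\,(1+z)^{-1}$ there; for $z>R$ large the expansion yields $|E_{\alpha,\beta}(-z)|\le c'z^{-1}\le 2c'(1+z)^{-1}$. Taking the larger of the two constants $c$ proves \eqref{bound-ml} for all $z\ge0$.
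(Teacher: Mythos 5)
The paper offers no proof of this lemma --- it simply points to \cite[p.~35]{Podlubny1999} and \cite[Theorem 3.2]{Jin:book} --- and your argument is exactly the standard one found there: entireness from the power series via Stirling, the purely algebraic asymptotic expansion from the Hankel-type integral representation (valid for $\pi\alpha/2<\mu<\min\{\pi,\pi\alpha\}$ and $\mu\le|\arg z|\le\pi$, which for $0<\alpha<2$ covers the negative real axis), and the uniform bound \eqref{bound-ml} by combining boundedness on compacts with the $O(z^{-1})$ decay. The one cosmetic slip is that for the algebraic expansion the contour must \emph{not} enclose the singularity $\mu^\alpha=z$ (enclosing it would produce precisely the exponential residue term you correctly assert is absent); with that fixed, your reading of the $k=1$ coefficients $1/\Gamma(1-\alpha)$ and $1/\Gamma(2-\alpha)$ gives \eqref{asymp-ml} exactly.
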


The solution of the diffusion-wave problem \eqref{eqn:fde-0} could be written as
\begin{equation}\label{eqn:sol-rep}
 u(t) = \FF(t)\begin{bmatrix} a\\b \end{bmatrix} + \int_0^t E(t-s)f(s)\,\d s
 = F(t) a + \bar F(t) b + \int_0^t E(t-s)f(s)\,\d s.
 \end{equation}
where the solution operators $ F(t)$, $\bar F(t)$ and $E(t)$ are respectively defined by
\begin{equation*}
\begin{aligned}
F(t) v        &=  \sum_{j=1}^\infty E_{\alpha,1}(-\lambda_jt^\alpha)(v,\fy_j)\fy_j, \quad \bar F(t) v =  \sum_{j=1}^\infty tE_{\alpha,2}(-\lambda_jt^\alpha)(v,\fy_j)\fy_j,\\
E(t)v         &=  \sum_{j=1}^\infty t^{\alpha-1}E_{\alpha,\alpha}(-\lambda_j t^\alpha)(v,\fy_j)\fy_j
\end{aligned}
\end{equation*}
for any $v\in L^2\II$.
By Laplace Transform, we have the following integral representations of the solution operators:
\begin{equation}\label{eqn:FE-LAP}
\begin{aligned}
F(t) &= \frac{1}{2\pi i }\int_\contour e^{zt} z^{\alpha-1}(z^\alpha-\Delta)^{-1}dz,\quad \bar F(t) &= \frac{1}{2\pi i }\int_\contour e^{zt} z^{\alpha-2}(z^\alpha-\Delta)^{-1}dz,\\
E(t) &= \frac{1}{2\pi i}\int_\contour  e^{zt} (z^\alpha-\Delta)^{-1} dz.
 \end{aligned}
\end{equation}
Here  $\contour$ denotes the integral contour in the complex plane $\mathbb{C}$, defined by
\begin{equation*}
\contour =\{z\in \mathbb{C}: |z| = \delta , |\arg z|\le \theta\} \cup \{ z\in \mathbb{C}: z =\rho e^{\pm i\theta},\rho\ge \sigma\},
\end{equation*}
with $\sigma\ge 0$ and $\frac{\pi}{2}<\theta< \frac{\pi}{\alpha}$,
oriented counterclockwise. 

To discuss the regularity of the solution, we shall need some notation.
Throughout, we denote by $\dH q$ the Hilbert space induced by the norm
\begin{equation*}
  \|v\|_{\dH q}^2:=\|(-\Delta)^\frac{q}{2}v\|_{L^2(\Omega)}^2=\sum_{j=1}^{\infty}\lambda_j^q ( v,\fy_j )^2, \qquad q\ge-1.
\end{equation*}
with $\{\lambda_j\}_{j=1}^\infty$ and $\{\fy_j\}_{j=1}^\infty$ being respectively the eigenvalues and
the $L^2(\Omega)$-orthonormal eigenfunctions of the negative Laplacian $-\Delta$ on the domain
$\Omega$ with a homogeneous Dirichlet boundary condition.
Then $\{\fy_j\}_{j=1}^\infty$ forms orthonormal basis in $L^2(\Omega)$ and hence
 $\|v\|_{\dH 0}=\|v\|_{L^2(\Omega)}$ is the norm in $L^2(\Omega)$.
Besides,  $\|v\|_{\dH 1}= \|\nabla v\|_{L^2(\Omega)}$ is a norm in $H_0^1(\Om)$,
$\|v\|_{\dH {-1}}= \| v\|_{ H^{-1}(\Omega)}$ is a norm in $H^{-1}(\Om) = (H_0^1\II)'$, 
$\|v\|_{\dH 2}=\|\Delta v\|_{L^2(\Omega)}$ is a norm in $H^2(\Om)\cap H^1_0(\Om)$
\cite[Section 3.1]{Thomee:2006}.

The important bounds in Lemma \ref{lem:ml} implies limited smoothing
properties in both space and time for the solution operators  $F(t)$, $\bar F(t)$ and $E(t)$.
Next, we state a few regularity results. The proof of these results can be found in, e.g.,
 \cite{Bajlekov:2001, JinLazarovZhou:SISC2016, Jin:book, SakamotoYamamoto:2011}.

\begin{lemma} \label{lem:reg-u}
Let $u(t)$ be defined in \eqref{eqn:sol-rep}. Then the following statements hold.
\begin{itemize}
  \item[$\rm(i)$] If $a,b \in \dH q$  with $q\in[0,2]$ and $f=0$, then $u(t)$ is the solution to problem \eqref{eqn:fde-0}, and
  $u(t)$ satisfies for any integer $m\ge 0$ and $ q \le p\le 2+q$
\begin{equation*}
\|  \partial_t^{(m)}  u(t) \|_{\dH p} \le c\left(t^{-m-\alpha(p-q)/2 }\|a\|_{\dH q}+t^{1- m-\alpha (p-q)/2}\|b\|_{\dH q}\right).
\end{equation*}
  \item[$\rm(ii)$]If $a=b=0$ and $f\in L^p(0,T;L^2(\Omega))$ with $1/\alpha<p<\infty$, then  $u(t)$ is the solution to problem \eqref{eqn:fde-0}
such that $u\in C([0,T];L^2\II)$ and
\begin{equation*}
\|  u\|_{L^p(0,T;\dot H^2(\Omega))}
+\|\Dal u\|_{L^p(0,T;L^2(\Omega))}
\le c\|f\|_{L^p(0,T;L^2(\Omega))}.
\end{equation*}
\end{itemize}
\end{lemma}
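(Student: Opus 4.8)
The plan is to derive both parts from the Dunford--Taylor (Laplace inversion) representations \eqref{eqn:FE-LAP} of the solution operators, combined with resolvent estimates for $(z^\al-\Delta)^{-1}$ that are available precisely because the contour angle satisfies $\theta<\pi/\al$, so that $z^\al$ lies in a sector of half-angle $\al\theta<\pi$ and thus stays strictly away from the negative real axis where the spectrum $\{-\lambda_j\}$ of $\Delta$ lives. The single analytic fact driving everything is that, since $-\Delta$ is self-adjoint and positive, $|z^\al+\lambda_j|\ge c(|z|^\al+\lambda_j)$ for all $z\in\contour$, which is the spectral counterpart of the bound \eqref{bound-ml}.

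For part (i), with $f=0$ we have $u(t)=F(t)a+\bar F(t)b$. I would first record the resolvent bound obtained from the sectoriality estimate by an elementary optimization in $\lambda_j$:
\[
\|(-\Delta)^{s}(z^\al-\Delta)^{-1}\|_{\L2Om\to\L2Om}=\sup_j\frac{\lambda_j^{s}}{|z^\al+\lambda_j|}\le c\,|z|^{\al(s-1)},\qquad 0\le s\le 1.
\]
Setting $s=(p-q)/2\in[0,1]$, which is exactly the admissible range $q\le p\le q+2$, and differentiating the representation of $F(t)$ under the integral sign, I obtain
\[
(-\Delta)^{(p-q)/2}\partial_t^{(m)}F(t)=\frac{1}{2\pi i}\int_\contour e^{zt}z^{m+\al-1}(-\Delta)^{(p-q)/2}(z^\al-\Delta)^{-1}\,dz.
\]
Taking norms, inserting the resolvent bound, and deforming the contour so that $\sigma\sim t^{-1}$, the standard estimate $\int_\contour|e^{zt}|\,|z|^{\beta}\,|dz|\le c\,t^{-\beta-1}$ with $\beta=m-1+\al(p-q)/2$ produces the factor $t^{-m-\al(p-q)/2}$ multiplying $\|a\|_{\dH q}$, after premultiplying $a$ by $(-\Delta)^{-q/2}$. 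The operator $\bar F(t)$ carries $z^{\al-2}$ in place of $z^{\al-1}$, i.e.\ one extra power $z^{-1}$, which the same contour scaling turns into one extra power of $t$; this yields the $t^{1-m-\al(p-q)/2}\|b\|_{\dH q}$ term. That $u$ so defined genuinely solves \eqref{eqn:fde-0} is immediate from the Laplace-transform construction.

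For part (ii), with $a=b=0$ the solution is $u(t)=\int_0^t E(t-s)f(s)\,ds$, and since $\Dal u=\Delta u+f$ holds pointwise, the bound on $\|u\|_{L^p(0,T;\dH 2)}=\|\Delta u\|_{L^p(0,T;\L2Om)}$ is equivalent, up to the harmless term $\|f\|_{L^p(0,T;\L2Om)}$, to the bound on $\|\Dal u\|_{L^p(0,T;\L2Om)}$. The statement thus reduces to the maximal $L^p$-regularity of $f\mapsto\Dal\int_0^t E(t-s)f(s)\,ds$. I would establish this via the operator-valued Fourier multiplier theorem of Weis: the symbol family $\{z^\al(z^\al-\Delta)^{-1}\}$ over the sector is $\mathcal R$-bounded because each member is a uniformly bounded function of the single self-adjoint operator $-\Delta$ (for which $\mathcal R$-boundedness within the functional calculus coincides with uniform boundedness on the Hilbert space $\L2Om$), and $\theta<\pi/\al$ keeps $z^\al$ in a sector of opening $<\pi$. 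The continuity $u\in C([0,T];\L2Om)$ then follows from the embedding of the maximal-regularity space, valid exactly when $1/\al<p$, which is the stated hypothesis.

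The main obstacle is part (ii): the elementary contour estimates that suffice for (i) do not yield $L^p$-in-time control, and the passage from the pointwise (in $z$) resolvent estimates to genuine $L^p$-boundedness requires the $\mathcal R$-boundedness machinery rather than a direct calculation. A secondary subtlety in (i) is uniformity of the resolvent bound across the whole range $q\le p\le q+2$, in particular at the endpoint $s=1$, where $\|(-\Delta)(z^\al-\Delta)^{-1}\|$ is bounded but no longer decays in $|z|$; there one checks that the contour integral nonetheless converges and reproduces the claimed exponent $t^{-m-\al}$.
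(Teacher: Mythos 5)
Your proposal is correct: part (i) via the Dunford--Taylor representation \eqref{eqn:FE-LAP} with the sectorial resolvent bound $\|(-\Delta)^{s}(z^\alpha-\Delta)^{-1}\|\le c|z|^{\alpha(s-1)}$ and contour deformation to $\sigma\sim t^{-1}$, and part (ii) via maximal $L^p$-regularity (trivialized on the Hilbert space $L^2(\Omega)$, where $\mathcal R$-boundedness reduces to uniform boundedness) is exactly the standard route. The paper itself gives no proof and instead cites \cite{Bajlekov:2001, JinLazarovZhou:SISC2016, Jin:book, SakamotoYamamoto:2011}, and your argument is essentially the one carried out in those references, so there is nothing to reconcile.
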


By using Lemma \ref{lem:reg-u}, if $u(t)$ is the solution to the diffusion-wave equation, the function
$w(t) = u(t) - \int_0^t E(t-s) f(s) \,\d s$
satisfies the diffusion-wave equation \eqref{eqn:fde-0} with a trivial source term.
Therefore, without loss of generality, throughout the paper we consider the homogeneous problem
\begin{align}\label{eqn:fde}
\begin{aligned}
\Dal u -\Delta u & = 0,\ &&\text{in}\ \Omega\times(0,T],\\
u &= 0,\ &&\text{on}\ \partial\Omega, \\
u(0) = a,~\partial_t u(0) &= b,\ &&\text{in}\ \Omega. \\
\end{aligned}
\end{align}

\section{Stability and regularization}\label{sec:stab}
The aim of this section is to show the Lipschitz stability of the inverse problem.
Moreover, we shall develop a regularization scheme to regularize the ``mildly'' ill-posed problem 
(with noisy observation data).
A complete analysis of the regularized problem will be provided.

\subsection{Stability of the backward diffusion-wave problems}
To begin with, we intend to examine the well-posedness of the 
backward problem diffusion-wave problem for $0<T_1<T_2\le T$
\begin{align}\label{eqn:back-1}
\begin{aligned}
\Dal u -\Delta u & = 0,\ &&\text{in}\ \Omega\times(0,T],\\
u &= 0,\ &&\text{on}\ \partial\Omega, \\
u(T_1) = g_1,~
 u(T_2) &= g_2,\ &&\text{in}\ \Omega.
\end{aligned}
\end{align}
Using the solution representation \eqref{eqn:sol-rep}, we have the following relation
\begin{equation}\label{eqn:sol-2}
\begin{aligned}
\begin{bmatrix}
g_1\\g_2
\end{bmatrix}
&=\GG(T_1,T_2) \begin{bmatrix} a\\b \end{bmatrix}
:= \begin{bmatrix}
F(T_1) & \bar F(T_1)\\
F(T_2) &\bar F(T_2)
\end{bmatrix}\begin{bmatrix} a\\b \end{bmatrix}=\sum_{j=1}^\infty \begin{bmatrix}
E_{\alpha,1}(-\lambda_j T_1^\alpha)& T_1E_{\alpha,2}(-\lambda_j T_1^\alpha)\\
E_{\alpha,1}(-\lambda_j T_2^\alpha)& T_2E_{\alpha,2}(-\lambda_j T_2^\alpha)
\end{bmatrix}
\begin{bmatrix}
(a,\fy_j) {\fy_j}\\(b,\fy_j) {\fy_j}
\end{bmatrix}.
\end{aligned}
\end{equation}
In order to represent the inverse of the operator $\GG(T_1,T_2)$, we define the function
\begin{equation}\label{eqn:psi}
\psi(T_1, T_2; \lambda_j) =  T_2E_{\alpha,1}(-\lambda_j T_1^\alpha)E_{\alpha,2}
(-\lambda_jT_2^\alpha)-T_1E_{\alpha,1}(-\lambda_j T_2^\alpha)E_{\alpha,2}(-\lambda_jT_1^\alpha).
\end{equation}
Then $\GG(T_1,T_2)^{-1}$ is well-defined, provided that $\psi(T_1, T_2; \lambda_j) \neq0$ for all $j=1,2,\ldots$, and
a direct computation leads to the relation
\begin{equation}\label{eqn:sol-3}
\begin{aligned}
\begin{bmatrix}
a\\b
\end{bmatrix}
&=\GG(T_1,T_2)^{-1} \begin{bmatrix} g_1\\g_2 \end{bmatrix}=\sum_{j=1}^\infty
\psi(T_1,T_2;\lambda_j)^{-1}
\begin{bmatrix}
T_2E_{\alpha,2}(-\lambda_j T_2^\alpha) &  {-T_1E_{\alpha,2}(-\lambda_jT_1^\alpha) }\\  { - E_{\alpha,1}(-\lambda_j T_2^\alpha)}
& E_{\alpha,1}(-\lambda_j T_1^\alpha)
\end{bmatrix}
\begin{bmatrix}
(g_1,\fy_j)\fy_j\\ (g_2,\fy_j)\fy_j
\end{bmatrix}.
\end{aligned}
\end{equation}

The next lemma
clarifies the conditions for $\psi(T_1,T_2;\lambda_j) \neq 0$ for all $j=1,2,\ldots$.
\begin{lemma}\label{lem:unique}
Let $\lambda>0$ and $\psi(T_1, T_2;\lambda)$ be the function defined in \eqref{eqn:psi}.
Then there exists a constant $M(\lambda)$ such that for all $T_2>T_1\ge M(\lambda)$, then
$$ {\psi(T_1,T_2;\lambda) \le \frac{c(T_2-T_1)}{ \Gamma(1-\alpha)
\Gamma(2-\alpha)}\frac{1}{\lambda^2T_1^\alpha T_2^\alpha} < 0  },$$
where the constant c is independent of $\lambda$,  $T_1$ and $T_2$.
\end{lemma}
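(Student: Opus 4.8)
The plan is to exploit the determinant structure of $\psi$. Writing $p(T)=E_{\alpha,1}(-\lambda T^\alpha)$ and $q(T)=T\,E_{\alpha,2}(-\lambda T^\alpha)$, definition \eqref{eqn:psi} reads $\psi(T_1,T_2;\lambda)=p(T_1)q(T_2)-p(T_2)q(T_1)$, i.e. $\psi$ is the $2\times2$ determinant with rows $(p(T_i),q(T_i))$. In particular $\psi$ is antisymmetric in $(T_1,T_2)$ and vanishes at $T_1=T_2$, which is precisely the origin of the factor $(T_2-T_1)$ in the claimed bound. The sign of the bound is governed by the fact that for $\alpha\in(1,2)$ one has $\Gamma(1-\alpha)<0<\Gamma(2-\alpha)$, so the target right-hand side is negative.

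First I would insert the leading asymptotics from Lemma \ref{lem:ml}. Setting $A=\tfrac{1}{\Gamma(1-\alpha)\lambda}$ and $B=\tfrac{1}{\Gamma(2-\alpha)\lambda}$, \eqref{asymp-ml} gives $p(T)=A\,T^{-\alpha}+\rho_p(T)$ and $q(T)=B\,T^{1-\alpha}+\rho_q(T)$ with $\rho_p(T)=O(\lambda^{-2}T^{-2\alpha})$ and $\rho_q(T)=O(\lambda^{-2}T^{1-2\alpha})$. I would also record the exact identities $q'(T)=p(T)$ and $p'(T)=-\lambda T^{\alpha-1}E_{\alpha,\alpha}(-\lambda T^\alpha)$, which, together with the asymptotics of $E_{\alpha,\alpha}$, yield the companion bounds $\rho_p'(T)=O(\lambda^{-2}T^{-2\alpha-1})$ and $\rho_q'(T)=\rho_p(T)=O(\lambda^{-2}T^{-2\alpha})$ (the cancellation $(1-\alpha)B=A$ is used here). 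Multiplying the two leading terms produces the exact expression $A\,B\,(T_1^{-\alpha}T_2^{1-\alpha}-T_2^{-\alpha}T_1^{1-\alpha})=A\,B\,T_1^{-\alpha}T_2^{-\alpha}(T_2-T_1)$, which equals $\tfrac{(T_2-T_1)}{\Gamma(1-\alpha)\Gamma(2-\alpha)\lambda^2 T_1^\alpha T_2^\alpha}$ and is strictly negative; this is the main term, already of the claimed form.

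It then remains to show that the three cross/remainder determinants are negligible relative to the main term, \emph{uniformly} in $T_2>T_1$. This uniformity is the main obstacle: the main term behaves like $(T_2-T_1)$ in the near-diagonal regime $T_2\downarrow T_1$ but decays like $T_2^{1-\alpha}$ as $T_2\to\infty$, and a naive term-by-term absolute estimate of a remainder determinant fails in one of these regimes because it discards the cancellation encoded in the antisymmetric structure. I would resolve this by first factoring out the common decay $T_1^{-\alpha}T_2^{-\alpha}$ from each remainder group: writing $\rho_p(T)=T^{-\alpha}\zeta(T)$ and $\rho_q(T)=T^{1-\alpha}\eta(T)$ with $\zeta,\eta=O(\lambda^{-2}T^{-\alpha})$, each group becomes $T_1^{-\alpha}T_2^{-\alpha}$ times a difference such as $T_2\eta(T_2)-T_1\eta(T_1)$ or $T_2\zeta(T_1)-T_1\zeta(T_2)$, i.e. a difference of a bounded, slowly varying function evaluated at $T_1$ and $T_2$. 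Applying the fundamental theorem of calculus to that difference, together with the derivative bounds above, extracts a factor $(T_2-T_1)$ and an extra $O(\lambda^{-2}T_1^{-\alpha})$; crucially, after the decay has been factored out the comparison is against the main term's bracket $(T_2-T_1)$ alone, so the crude estimate $\int_{T_1}^{T_2}s^{-\alpha}\,\d s\le(T_2-T_1)T_1^{-\alpha}$ suffices in both regimes and no case split is needed.

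Combining these estimates shows that each remainder determinant is bounded by $O(\lambda^{-r}T_1^{-\alpha})$ times the (negative) main term for some fixed power $r$. I would then choose $M(\lambda)$ large enough that this relative error is at most $\tfrac12$, which forces $\psi\le \tfrac12\,A\,B\,T_1^{-\alpha}T_2^{-\alpha}(T_2-T_1)<0$ for all $T_2>T_1\ge M(\lambda)$, establishing the assertion with $c=\tfrac12$ (independent of $\lambda$, $T_1$, $T_2$, the $\lambda$-dependence being absorbed entirely into the threshold $M(\lambda)$).
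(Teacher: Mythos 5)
Your proposal is correct and follows the same route as the paper: insert the large-argument asymptotics of $E_{\alpha,1}$ and $E_{\alpha,2}$ from Lemma \ref{lem:ml}, identify the leading determinant $\frac{T_2-T_1}{\Gamma(1-\alpha)\Gamma(2-\alpha)}\frac{1}{\lambda^2T_1^\alpha T_2^\alpha}$, observe that it is negative because $\Gamma(1-\alpha)<0<\Gamma(2-\alpha)$ for $\alpha\in(1,2)$, and take $M(\lambda)$ large enough that the remainder is at most half the leading term, giving the bound with $c=\tfrac12$. The one point where you go beyond the paper is in justifying that the remainder determinants carry a factor of $(T_2-T_1)$ uniformly down to the diagonal $T_2\downarrow T_1$: the paper simply asserts the factored expansion \eqref{asymp:determinant}, whereas your antisymmetry-plus-fundamental-theorem-of-calculus argument (using $q'=p$, the identity $(1-\alpha)/\Gamma(2-\alpha)=1/\Gamma(1-\alpha)$, and the differentiated asymptotics) actually proves it — a worthwhile tightening of the same argument rather than a different approach.
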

\begin{proof}
By means of the asymptotic property of Mittag--Leffler functions in \eqref{asymp-ml},  we have
\begin{equation}\label{asymp:determinant}
\psi(T_1,T_2;\lambda) = (T_2-T_1)\left( \frac{1}{\Gamma(1-\alpha)\Gamma(2-\alpha)}\frac{1}{\lambda^2T_1^\alpha T_2^\alpha}
+O\Big(\frac{1}{\lambda^4T_1^{2\alpha}T_2^{2\alpha}}\Big) \right), \qquad \text{for}~~T_1,T_2\to\infty.
\end{equation}
For $\lambda>0$ and $T_2>T_1>0$, we know the leading term
$ {\frac{1}{\Gamma(1-\alpha)\Gamma(2-\alpha)}\frac{1}{\lambda^2T_1^\alpha T_2^\alpha}<0},$
and hence the asymptotic behavior \eqref{asymp:determinant} implies the existence of $M(\lambda)$ such that for all $T_2>T_1\ge M(\lambda)$
$$  {\psi(T_1,T_2;\lambda) \le \frac{T_2-T_1}{2\Gamma(1-\alpha)\Gamma(2-\alpha)}\frac{1}{\lambda^2T_1^\alpha T_2^\alpha} < 0  }.$$
This completes the proof of the lemma.
\end{proof}

Combining Lemmas \ref{lem:ml} and \ref{lem:unique}, we have the following stability estimate.

\begin{theorem}\label{thm:stab}
Let $\lambda_1$ be the smallest eigenvalue of $-\Delta$ with homogeneous Dirichlet boundary condition,
and $M(\lambda_1)$ be the constant defined in Lemma \eqref{lem:unique}.
Suppose that $T_2 > T_1 \ge  M(\lambda_1)$. Then for any
$g_1, g_2\in \dH2$, there exists $a,b\in \L2Om$ such that the solution $u$ to \eqref{eqn:fde} satisfies
\begin{equation*}
u(T_1) = g_1\quad \text{and}\quad u(T_2) = g_2.
\end{equation*}
Meanwhile, there holds the following two-sided Lipschitz stability
\begin{equation}\label{eqn:sol-4}
c_1\Big(\|g_1\|_{\dH 2}+\|g_2\|_{\dH 2}\Big) \le \|a\|_\L2Om+\|b\|_\L2Om\le c_2\Big(\|g_1\|_{\dH 2}+\|g_2\|_{\dH 2}\Big).
\end{equation}
\end{theorem}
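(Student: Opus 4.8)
The plan is to diagonalize the operator $\GG(T_1,T_2)$ in the eigenbasis $\{\fy_j\}$ of $-\Delta$, thereby reducing the two-sided estimate to a family of uniform scalar bounds on the Fourier coefficients $(a,\fy_j)$, $(b,\fy_j)$ and $(g_i,\fy_j)$. First I would establish that $\GG(T_1,T_2)^{-1}$ is well-defined, i.e. that $\psi(T_1,T_2;\lambda_j)\ne 0$ for every $j$. Lemma \ref{lem:unique} supplies this for each fixed mode once $T_1\ge M(\lambda_j)$; the point I must address is that the single hypothesis $T_1\ge M(\lambda_1)$ covers all modes simultaneously. This holds because the threshold is governed by the size of the argument $\lambda_jT_1^\alpha$ of the Mittag--Leffler functions: there is a universal $R_0$ so that the asymptotic expansion \eqref{asymp-ml}, hence the bound \eqref{asymp:determinant}, is valid whenever $\lambda_jT_1^\alpha\ge R_0$. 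Since $\lambda_j\ge\lambda_1$ and $T_2>T_1$, the binding constraint is $\lambda_1T_1^\alpha\ge R_0$, which is exactly $T_1\ge M(\lambda_1)$; equivalently $M$ may be taken nonincreasing in $\lambda$, so $M(\lambda_j)\le M(\lambda_1)$ for all $j$. With $\psi_j:=\psi(T_1,T_2;\lambda_j)\ne0$ fixed, defining $(a,b)$ through \eqref{eqn:sol-3} produces the desired solution with $u(T_1)=g_1$ and $u(T_2)=g_2$.

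For the \emph{upper} bound, I would read off from \eqref{eqn:sol-3} the scalar identities
\begin{align*}
(a,\fy_j) &= \psi_j^{-1}\big[T_2E_{\alpha,2}(-\lambda_jT_2^\alpha)(g_1,\fy_j)-T_1E_{\alpha,2}(-\lambda_jT_1^\alpha)(g_2,\fy_j)\big],\\
(b,\fy_j) &= \psi_j^{-1}\big[-E_{\alpha,1}(-\lambda_jT_2^\alpha)(g_1,\fy_j)+E_{\alpha,1}(-\lambda_jT_1^\alpha)(g_2,\fy_j)\big],
\end{align*}
and combine the decay bound \eqref{bound-ml}, namely $|E_{\alpha,\beta}(-\lambda_jT_i^\alpha)|\le c(1+\lambda_jT_i^\alpha)^{-1}$, with the uniform lower bound $|\psi_j|\ge c(T_2-T_1)\lambda_j^{-2}T_1^{-\alpha}T_2^{-\alpha}$ coming from Lemma \ref{lem:unique}. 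Each product then collapses to a bound of the form $|(a,\fy_j)|+|(b,\fy_j)|\le c\,\lambda_j\big(|(g_1,\fy_j)|+|(g_2,\fy_j)|\big)$, in which the single surviving factor $\lambda_j$ is precisely what the $\dH2$-norm on the right absorbs: squaring, summing over $j$, and using $\sum_j\lambda_j^2(g_i,\fy_j)^2=\|g_i\|_{\dH2}^2$ yields $\|a\|_\L2Om+\|b\|_\L2Om\le c_2(\|g_1\|_{\dH2}+\|g_2\|_{\dH2})$.

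The \emph{lower} bound is the easier direction and uses only the smoothing of the forward operators. From \eqref{eqn:sol-2} I would write $(g_1,\fy_j)=E_{\alpha,1}(-\lambda_jT_1^\alpha)(a,\fy_j)+T_1E_{\alpha,2}(-\lambda_jT_1^\alpha)(b,\fy_j)$ and likewise for $g_2$; multiplying by $\lambda_j$ and applying \eqref{bound-ml} together with the elementary inequality $\lambda_j(1+\lambda_jT_i^\alpha)^{-1}\le T_i^{-\alpha}$ gives $\lambda_j|(g_1,\fy_j)|\le cT_1^{-\alpha}|(a,\fy_j)|+cT_1^{1-\alpha}|(b,\fy_j)|$, so that the gain of two spatial derivatives exactly compensates the $\lambda_j$ in the $\dH2$-norm. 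Summing the squares then produces $\|g_1\|_{\dH2}+\|g_2\|_{\dH2}\le c_1^{-1}(\|a\|_\L2Om+\|b\|_\L2Om)$.

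I expect the main obstacle to be the upper bound, and specifically the uniform (in $j$) control of $\psi_j^{-1}$: one must verify that the leading-order cancellation in \eqref{eqn:psi} does not degrade for moderate arguments (small $\lambda_j$) under the single hypothesis $T_1\ge M(\lambda_1)$, i.e. that Lemma \ref{lem:unique} genuinely delivers a lower bound on $|\psi_j|$ with a constant independent of $j$. Everything else is a routine mode-by-mode estimate followed by a Parseval summation.
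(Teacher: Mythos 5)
Your proposal is correct and follows essentially the same route as the paper: diagonalize $\GG(T_1,T_2)$ in the eigenbasis, invoke the uniform lower bound on $|\psi(T_1,T_2;\lambda_j)|$ from Lemma \ref{lem:unique} together with the Mittag--Leffler decay \eqref{bound-ml} for the upper bound, and use the forward smoothing (the paper cites Lemma \ref{lem:reg-u}, which you reprove mode by mode) for the lower bound. Your explicit remark that $M(\lambda)$ may be taken nonincreasing in $\lambda$, so that the single hypothesis $T_1\ge M(\lambda_1)$ controls every mode, is a point the paper leaves implicit in asserting \eqref{eqn:est-psi} for all $\lambda\ge\lambda_1$, and is a worthwhile clarification rather than a deviation.
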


\begin{proof}
By Lemma \ref{lem:unique} and the asymptotic estimate \eqref{asymp:determinant}, we have for all $T_2>T_1>M(\lambda_1)$
and $\lambda \ge \lambda_1$
\begin{equation}\label{eqn:est-psi}
  {|\psi(T_1,T_2;\lambda)| \ge \left|\frac{c(T_2-T_1)}{\Gamma(1-\alpha)\Gamma(2-\alpha)}\frac{1}{\lambda^2T_1^\alpha T_2^\alpha}\right| > 0, }
 \end{equation}
where the constant $c$ is independent of $\lambda_j$, $T_1$ and $T_2$. This together with \eqref{eqn:sol-3}
indicates the existence and uniqueness of initial data $a$ and $b$.

Next we turn to the stability estimate. Noting that the first inequality has been confirmed by Lemma \ref{lem:reg-u},
so it suffices to verify the second one. The estimate \eqref{eqn:est-psi} and the relation \eqref{eqn:sol-3} imply
\begin{equation*}
\begin{aligned}
\| a \|_{L^2\II}^2 + \| b \|_{L^2\II}^2 &\le \frac{c}{(T_2-T_1)^2} \sum_{j=1}^\infty \lambda_j^4
 \Big(\frac{(g_1, \fy_j)^2}{(1+\lambda_j T_2^\alpha)^2} + \frac{(g_2, \fy_j)^2}{(1+\lambda_j T_2^\alpha)^2} \Big)\\
 &\le  \frac{c}{(T_2-T_1)^2}  \Big(\|g_1\|_{\dH 2}^2+\|g_2\|_{\dH 2}^2\Big).
\end{aligned}
\end{equation*}

\end{proof}

\begin{remark}
Note that in the stability estimate \eqref{eqn:sol-4} the constant $c_2$ is proportional to $(T_2-T_1)^{-1}$.
This is reasonable since one cannot recover two initial data $u(0)$ and $\partial_t u(0)$ from a single observation $u(T)$.
Throughout our numerical analysis, we shall assume that $T_2>T_1\ge M(\lambda_1)$ and $T_2-T_1 \ge c_0 >0$.
\end{remark}

\subsection{Regularization and convergence analysis}
From now on, we shall assume that our observation is noisy with noise level $\delta$, i.e., \eqref{eqn:noise}.
Note that both $g_1^\delta$ and $g_2^\delta$ are nonsmooth.
Since the backward diffusion-wave problem \eqref{eqn:back-1}  is mildly ill-posed, we shall regularize the problem by using
the quasi boundary value scheme \eqref{eqn:back-2}.
Recalling the definition of the operator $\GG(T_1,T_2)$ in \eqref{eqn:sol-2}, the solution to the regularized problem \eqref{eqn:back-2}
could be written as
\begin{equation*}
\begin{aligned}
\begin{bmatrix}
g_1^\delta \\g_2^\delta
\end{bmatrix}
&= (\gamma \I+ \GG(T_1,T_2))
\begin{bmatrix}
\tu(0)\\ \partial_t \tu(0)
\end{bmatrix}:=\sum_{j=1}^\infty
\begin{bmatrix}
-\gamma +E_{\alpha,1}(-\lambda_jT_1^\alpha)&T_1E_{\alpha,2}(-\lambda_jT_1^\alpha)\\
E_{\alpha,1}(-\lambda_jT_2^\alpha)&\gamma +T_2E_{\alpha,2}(-\lambda_jT_2^\alpha)
\end{bmatrix}
\begin{bmatrix}
(\tud(0),\fy_j)\fy_j\\
(\partial_t \tud(0),\fy_j)\fy_j
\end{bmatrix}
\end{aligned}
\end{equation*}
where $\I$ denotes the matrix of operators
\begin{equation}\label{eqn:I}
\I  = \begin{bmatrix}
-I & 0 \\
0 & I
\end{bmatrix}
\end{equation}
where $I$ is the identity operator.

Now we define an auxiliary function
\begin{equation}\label{form:regular:determinant}
 {\tilde \psi(T_1,T_2;\lambda_j) :=\psi(T_1,T_2;\lambda_j)- \gamma^2+\gamma [E_{\alpha,1}(-\lambda_j T_1^\alpha)-T_2E_{\alpha,2}(-\lambda_jT_2^\alpha)].}
\end{equation}
Lemma \ref{lem:ml} implies that there exists a constant $z_0>0$ such that for $z\ge z_0$,
\begin{equation*}
\begin{aligned}
 {  E_{\alpha,1}(-z) \le\frac{1}{2\Gamma(1-\alpha)}\frac{1}{z}  < 0 } \quad \text{and}\quad
 { E_{\alpha,2}(-z) \ge \frac{1}{2\Gamma(2-\alpha)}\frac{1}{z} > 0}.
\end{aligned}
\end{equation*}
Without loss of generality, we assume that
\begin{equation}\label{ass:z0}
M(\lambda_1)^\alpha > z_0/\lambda_1.
\end{equation}
Then with $T_2>T_1\ge M(\lambda_1)$, 
\begin{equation}\label{eqn:determinant-2-b}
 { \tilde \psi(T_1,T_2;\lambda_j) \le -c\Big( \lambda_j^{-2} +  \gamma \lambda_j^{-1} +\gamma^2 \Big)<0 },
\end{equation}
where $c$ is only dependent on $T_1$, $T_2$ and $\alpha$.
Therefore the operator $\gamma \I+ \GG(T_1,T_2)$ is also invertible and there holds the relation
 \begin{align}\label{eqn:sol-back-2}
\begin{bmatrix}
\tud(0)\\\partial_t \tud(0)
\end{bmatrix}
&=(\gamma \I+\GG(T_1,T_2))^{-1} \begin{bmatrix} g_1^\delta\\g_2^\delta \end{bmatrix}\\
&=\sum_{j=1}^\infty
\widetilde \psi(T_1,T_2;\lambda_j)^{-1}
\begin{bmatrix}
\gamma+T_2E_{\alpha,}(-\lambda_jT_2^\alpha)& -T_1E_{\alpha,2}(-\lambda_jT_1^\alpha)  \\
 -E_{\alpha,1}(-\lambda_jT_2^\alpha) &  -\gamma +E_{\alpha,1}(-\lambda_jT_1^\alpha)
\end{bmatrix}
\begin{bmatrix}
(g_1^\delta,\fy_j)\fy_j\\ (g_2^\delta,\fy_j)\fy_j
\end{bmatrix}.\notag
\end{align}
 Meanwhile, with $\FF(t) = [F(t)~\bar F(t)]$, we know
 \begin{equation}\label{eqn:sol-back-3}
\tu^\delta (t)
=\FF(t)(\gamma \I +\GG(T_1,T_2))^{-1}
\begin{bmatrix}
g_1^\delta \\g_2^\delta
\end{bmatrix}.
\end{equation}

Now we intend to establish estimates for $u(0) - \tu^\delta(0)$, $\partial_tu(0) - \partial_t\tu^\delta(0)$ and $u(t) - \tu^\delta (t)$.
To this end, we need the following auxiliary function
 \begin{equation}\label{eqn:sol-back-4}
\tu (t)
=\FF(t)(\gamma I +\GG(T_1,T_2))^{-1}
\begin{bmatrix}
g_1 \\g_2
\end{bmatrix}=\FF(t)(\gamma I +\GG(T_1,T_2))^{-1} \GG(T_1,T_2)
\begin{bmatrix}
a \\ b
\end{bmatrix},
\end{equation}
which is the solution to the following quasi boundary value problem:
\begin{align}\label{eqn:back-3}
\begin{aligned}
\Dal \tu -\Delta \tu & = 0,&& \text{in}\ \Omega\times(0,T],\\
\tu &= 0,&& \text{on}\ \partial\Omega,\\
-\gamma \tu(0)+\tu(T_1) &= g_1,&& \text{in}\ \Omega, \\
\gamma \partial_t \tu(0)+\tu(T_2) &= g_2, &&\text{in}\ \Omega.
\end{aligned}
\end{align}

The next lemma provides an estimate for the operator $\FF(t)(\gamma \I +\GG(T_1,T_2))^{-1}$.
\begin{lemma} \label{lem:regular:estimate:ope}
Let $M(\lambda_1)$ be the constant defined in Lemma \ref{lem:unique},
and suppose that $T_2 > T_1 \ge  M(\lambda_1)$.
Let $\FF(t)$ and $(\gamma \I +\GG(T_1,T_2))^{-1}$ be defined in
\eqref{eqn:sol-rep} and \eqref{eqn:sol-back-2}, then for all $0<t \le T$, $v,w\in \dH q$, for any $0\le p\le q\le 2+p$, we have
\begin{equation*}
\left\|\Big(\frac{d}{dt}\Big)^\ell\FF(t)(\gamma \I +\GG(T_1,T_2))^{-1}\begin{bmatrix}
v\\w
\end{bmatrix}
\right\|_{\dH p} \le  c t^{-\ell}
\min(\gamma^{-(1+\frac{p-q}{2})},t^{-\alpha(1+\frac{p-q}{2})}) (\|v\|_{\dH q} +\|w\|_{\dH q}).
\end{equation*}
Meanwhile, we have
\begin{equation*}
\left\|(\gamma \I +\GG(T_1,T_2))^{-1}\begin{bmatrix}
v\\w
\end{bmatrix}
\right\|_{\dH p} \le  c\gamma^{-(1+\frac{p-q}{2})} (\|v\|_{\dH q} +\|w\|_{\dH q}).
\end{equation*}
\end{lemma}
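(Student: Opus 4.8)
The plan is to diagonalize the entire operator in the eigenbasis $\{\fy_j\}$, reduce both bounds to scalar multiplier estimates via Parseval's identity, and then prove the multiplier estimates by a two–regime analysis that trades the regularization scale $\gamma$ against the time scale $t$.

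First I would combine the series representation \eqref{eqn:sol-back-2} of $(\gamma\I+\GG(T_1,T_2))^{-1}$ with the definitions of $F(t)$ and $\bar F(t)$ to write, for $v,w\in\dH q$,
\begin{equation*}
\FF(t)(\gamma\I+\GG(T_1,T_2))^{-1}\begin{bmatrix} v\\w\end{bmatrix}
=\sum_{j=1}^\infty \big( m_j^v(t)\,(v,\fy_j)+m_j^w(t)\,(w,\fy_j)\big)\fy_j,
\end{equation*}
where each scalar multiplier is $\tilde\psi(T_1,T_2;\lambda_j)^{-1}$ times a combination of $E_{\alpha,1}(-\lambda_j t^\alpha)$ and $tE_{\alpha,2}(-\lambda_j t^\alpha)$ with the corresponding factors at the terminal times. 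Since $\|\cdot\|_{\dH p}^2=\sum_j\lambda_j^p(\cdot,\fy_j)^2$, Parseval's identity reduces the first claimed inequality to the pointwise bound
\begin{equation*}
|m_j^v(t)|+|m_j^w(t)|\le c\,t^{-\ell}\,\lambda_j^{(q-p)/2}\,\min\big(\gamma^{-(1+\frac{p-q}{2})},\,t^{-\alpha(1+\frac{p-q}{2})}\big),
\end{equation*}
and the second inequality (with $\FF(t)$ removed) to the pure $c\,\gamma^{-(1+\frac{p-q}{2})}\lambda_j^{(q-p)/2}$ bound on the four entries of $\tilde\psi_j^{-1}$ times the cofactor matrix in \eqref{eqn:sol-back-2}.

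Next I would estimate the multipliers. A short computation identifies the bracket multiplying $(v,\fy_j)$ as $\gamma E_{\alpha,1}(-\lambda_j t^\alpha)+\psi(t,T_2;\lambda_j)$ and the one multiplying $(w,\fy_j)$ as $-\gamma tE_{\alpha,2}(-\lambda_j t^\alpha)-\psi(t,T_1;\lambda_j)$, with $\psi$ as in \eqref{eqn:psi}. Using only the uniform bound \eqref{bound-ml}, and writing $\mu=\lambda_j$, each numerator is controlled by $c\gamma(1+\mu t^\alpha)^{-1}+c(1+\mu t^\alpha)^{-1}(1+\mu)^{-1}$ (absorbing the fixed $T_1,T_2$ and $t\le T$ into $c$), while the lower bound \eqref{eqn:determinant-2-b} yields $|\tilde\psi_j|^{-1}\le c\,\mu^2(1+\gamma\mu)^{-2}$. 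Inserting these, I would establish the two one–sided estimates separately: for the $\gamma$–regime I use $\gamma\mu^2(1+\gamma\mu)^{-2}\le \gamma^{-1}(\gamma\mu)^2(1+\gamma\mu)^{-2}$ and $\gamma\mu(1+\gamma\mu)^{-2}\le\tfrac14$; for the $t$–regime I use $\mu(1+\mu t^\alpha)^{-1}\le t^{-\alpha}$ together with $\gamma\mu(1+\gamma\mu)^{-2}\le\tfrac14$. The fractional weight $\lambda_j^{(q-p)/2}$ for $0\le q-p\le 2$ is then recovered by distributing the available decay through the elementary inequalities $x(1+x)^{-1}\le x^{r}$ and $(\gamma\mu)^{2-r}\le(1+\gamma\mu)^2$ ($0\le r\le1$, $x\ge0$), applied with $x=\mu t^\alpha$ and $x=\gamma\mu$. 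Taking the minimum of the two one–sided bounds gives the case $\ell=0$.

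Finally, for $\ell\ge1$ I would differentiate at the symbol level, using $\frac{d}{dt}E_{\alpha,1}(-\mu t^\alpha)=-\mu t^{\alpha-1}E_{\alpha,\alpha}(-\mu t^\alpha)$ and the analogous identity for $tE_{\alpha,2}(-\mu t^\alpha)$, so that each time derivative contributes a factor bounded by $c\,t^{-1}$ uniformly in $j$ (since $\mu t^{\alpha-1}(1+\mu t^\alpha)^{-1}\le t^{-1}$) without degrading the decay order, which produces the extra $t^{-\ell}$. The second estimate follows from the same multiplier bounds restricted to the $\gamma$–regime (it is exactly the $t$–independent part). I expect the main obstacle to be the bookkeeping in the regime split: one must verify that, in each regime, the product of the $\tilde\psi_j^{-1}$ lower bound with the numerator bound produces \emph{exactly} the weight $\lambda_j^{(q-p)/2}$ together with the correct exponent $1+\frac{p-q}{2}$ of $\gamma$ (respectively of $t^{-\alpha}$), uniformly over the full range $0\le p\le q\le 2+p$ and over all $\mu\ge\lambda_1$. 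The delicate point is the interplay of the $\gamma$– and $t$–scales that generates the $\min$, rather than any single estimate.
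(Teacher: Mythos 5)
Your proposal is correct and follows essentially the same route as the paper: expand in the eigenbasis, reduce to scalar multiplier bounds via Parseval, control the numerators by the Mittag--Leffler decay \eqref{bound-ml} and the denominator by the lower bound \eqref{eqn:determinant-2-b} on $\tilde\psi$, and interpolate in $\lambda_j$ to get the $\min(\gamma^{-(1+\frac{p-q}{2})},t^{-\alpha(1+\frac{p-q}{2})})$ factor. The only organizational difference is that you multiply out the product into explicit multipliers involving $\psi(t,T_i;\lambda_j)$ while the paper bounds the two matrix factors separately (estimates \eqref{bound:regular:mat:F:t} and \eqref{bound:regular:mat:G}), and your symbol-level treatment of the $t$-derivatives actually addresses the $\ell\ge1$ case, which the paper's proof leaves implicit.
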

\begin{proof}
First of all, for $0<t \le T$, we let
\begin{equation*}
\begin{aligned}
\zeta(t) &=\FF(t)(\gamma I +\GG(T_1,T_2))^{-1}\begin{bmatrix}
v\\w
\end{bmatrix} \\
&= \sum_{j=1}^\infty \tilde\psi(T_1,T_2;\lambda_j)^{-1}
\begin{bmatrix}
E_{\alpha,1}(-\lambda_jt^\alpha)&tE_{\alpha,2}(-\lambda_jt^\alpha)
\end{bmatrix}
\begin{bmatrix}
\gamma+T_2E_{\alpha,2}(-\lambda_jT_2^\alpha)& {-T_1E_{\alpha,2}(-\lambda_jT_1^\alpha)}\\  {-E_{\alpha,1}(-\lambda_jT_2^\alpha)}
 &  {-\gamma}+E_{\alpha,1}(-\lambda_jT_1^\alpha)
\end{bmatrix}
\begin{bmatrix}
(v,\fy_j)\fy_j\\(w,\fy_j)\fy_j
\end{bmatrix}.
\end{aligned}
\end{equation*}
By means of Lemmas \ref{lem:ml}, we arrive at
\begin{equation}\label{bound:regular:mat:F:t}
\begin{bmatrix}
|E_{\alpha,1}(-\lambda_jt^\alpha)|&|tE_{\alpha,2}(-\lambda_jt^\alpha)|
\end{bmatrix}
\le  \frac{c}{1+\lambda_j t^\alpha}\begin{bmatrix}
1 & t
\end{bmatrix}.
\end{equation}
Similarly by Lemma \ref{lem:ml} and the estimate \eqref{eqn:determinant-2-b}
\begin{equation}\label{bound:regular:mat:G}
\begin{aligned}
|\t \psi(T_1,T_2;\lambda_j)|^{-1}
\begin{bmatrix}
|\gamma+T_2E_{\alpha,2}(-\lambda_jT_2^\alpha)| & {|-T_1E_{\alpha,2}(-\lambda_jT_1^\alpha)|} \\
  {|-E_{\alpha,1}(-\lambda_jT_2^\alpha)|} & | {-\gamma}+E_{\alpha,1}(-\lambda_jT_1^\alpha)|
\end{bmatrix} 
&\le
\frac{c \lambda_j}{1+\gamma\lambda_j}\begin{bmatrix}
1 & 1\\
1& 1
\end{bmatrix}
\end{aligned}
\end{equation}
Combining \eqref{bound:regular:mat:F:t} and  \eqref{bound:regular:mat:G} we obtain
\begin{equation*}
\begin{aligned}
\lambda_j^p
(\zeta(t),\fy_j)^2
&\le  c
 \left(\frac{\lambda_j^{1+\frac{p-q}{2}}}{(1+\gamma\lambda_j)(1+\lambda_jt^\alpha)}\right)^2 \lambda_j^q ((v,\fy_j)^2+(w,\fy_j)^2) \\
&\le c
\Big(\min(\gamma^{-(1+\frac{p-q}{2})},t^{-\alpha(1+\frac{p-q}{2})})\Big)^2 \lambda_j^q((v,\fy_j)^2+(w,\fy_j)^2).
\end{aligned}
\end{equation*}
As a result, we conclude that
\begin{equation*}
\begin{aligned}
 \|  \zeta(t) \|_{\dH p}^2 &\le c \Big(\min(\gamma^{-(1+\frac{p-q}{2})},t^{-\alpha(1+\frac{p-q}{2})})\Big)^2 \sum_{j=1}^\infty \lambda_j^q((v,\fy_j)^2+(w,\fy_j)^2)\\
 & = c \Big(\min(\gamma^{-(1+\frac{p-q}{2})},t^{-\alpha(1+\frac{p-q}{2})})\Big)^2 \Big(\|  v \|_{\dH q}^2 + \|  w \|_{\dH q}^2\Big).
\end{aligned}
\end{equation*}

Now we turn to the second estimate. Noting that
\begin{equation*}
\begin{aligned}
\begin{bmatrix}
\zeta\\ \xi
\end{bmatrix}
&=(\gamma I +\GG(T_1,T_2))^{-1}\begin{bmatrix}
v\\w
\end{bmatrix} = \sum_{j=1}^\infty \tilde\psi(T_1,T_2;\lambda_j)^{-1}
\begin{bmatrix}
\gamma+T_2E_{\alpha,}(-\lambda_jT_2^\alpha)&  {-T_1E_{\alpha,2}(-\lambda_jT_1^\alpha)} \\
 {-E_{\alpha,1}(-\lambda_jT_2^\alpha)}&  {-\gamma}+E_{\alpha,1}(-\lambda_jT_1^\alpha)
\end{bmatrix}
\begin{bmatrix}
(v,\fy_j)\fy_j\\(w,\fy_j)\fy_j
\end{bmatrix},
\end{aligned}
\end{equation*}
the estimate \eqref{bound:regular:mat:G} leads to
\begin{equation*}
\begin{aligned}
\|\zeta\|_{\dH p}^2 + \|\xi\|_{\dH p}^2
&\le c \sum_{j=1}^\infty
\Big( \frac{ \lambda_j^{1+\frac{p-q}{2}}}{1+\gamma\lambda_j}\Big)^2
\lambda_j^q \Big(  (v,\fy_j)^2+ (w,\fy_j)^2 \Big)\\
&\le c\gamma^{-(2+(p-q))}  \Big(\|v\|_{\dH q}^2+\|w\|_{\dH q}^2\Big) .
\end{aligned}
\end{equation*}
This completes the proof of the lemma.
\end{proof}

Using the similar argument, we have the following estimate for higher regularity estimate for $\tu(0)$ and $\partial_t \tu(0)$,
which will be intensively used in the the next section.
\begin{corollary}\label{cor:tu-reg}
Let $M(\lambda_1)$ be the constant defined in Lemma \ref{lem:unique},
and suppose that $T_2 > T_1 \ge  M(\lambda_1)$.
Let $\tu$ be the solution to\eqref{eqn:back-3}. Then there holds
\begin{equation*}
\|  \tu(0) \|_{\dH q} + \| \partial_t \tu(0)  \|_{\dH q}  \le c \gamma^{-q/2} \Big( \|  a \|_{L^2\II} + \| b  \|_{L^2\II}\Big).
\end{equation*}
\end{corollary}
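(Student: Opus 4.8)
The plan is to read off the initial traces of $\tu$ from the representation \eqref{eqn:sol-back-4} and then invoke the operator bound already established in Lemma \ref{lem:regular:estimate:ope}. Since $\tu$ solves the quasi boundary value problem \eqref{eqn:back-3} with the \emph{exact} data $g_1=u(T_1)$, $g_2=u(T_2)$, i.e. the forward traces $\begin{bmatrix} g_1\\ g_2\end{bmatrix}=\GG(T_1,T_2)\begin{bmatrix} a\\ b\end{bmatrix}$, the formula \eqref{eqn:sol-back-4} evaluated at $t=0$ gives
\begin{equation*}
\begin{bmatrix} \tu(0)\\ \partial_t\tu(0)\end{bmatrix}
=(\gamma\I+\GG(T_1,T_2))^{-1}\begin{bmatrix} g_1\\ g_2\end{bmatrix}.
\end{equation*}
Thus the corollary is essentially the second estimate of Lemma \ref{lem:regular:estimate:ope} applied to $(v,w)=(g_1,g_2)$, once we know how smooth $g_1$ and $g_2$ are.

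The key observation is that, although $a,b$ belong only to $\L2Om$, the intermediate data $g_1,g_2$ are already smooth, being time-$T_i$ traces of the forward solution. Concretely, I would apply Lemma \ref{lem:reg-u}(i) with $m=0$, source index $0$ and target index $2$ to obtain
\begin{equation*}
\|g_i\|_{\dH 2}=\|u(T_i)\|_{\dH 2}\le c\big(T_i^{-\alpha}\|a\|_{\L2Om}+T_i^{1-\alpha}\|b\|_{\L2Om}\big)\le c\big(\|a\|_{\L2Om}+\|b\|_{\L2Om}\big),\quad i=1,2,
\end{equation*}
where in the last step I use that $T_1,T_2$ are fixed, bounded below by $M(\lambda_1)$ and above by $T$, so the prefactors are harmless constants. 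Hence $g_1,g_2\in\dH 2$.

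Now I would apply the second estimate of Lemma \ref{lem:regular:estimate:ope} to $(v,w)=(g_1,g_2)$, measuring the input in $\dH 2$ and the output in $\dH q$; in the notation of that lemma the source index is $2$ and the target index is our $q\in[0,2]$, so the admissibility constraint $0\le q\le 2\le 2+q$ holds. The exponent $-(1+\tfrac{p-q}{2})$ there becomes $-(1+\tfrac{q-2}{2})=-q/2$, and we obtain
\begin{equation*}
\|\tu(0)\|_{\dH q}+\|\partial_t\tu(0)\|_{\dH q}
\le c\gamma^{-q/2}\big(\|g_1\|_{\dH 2}+\|g_2\|_{\dH 2}\big)
\le c\gamma^{-q/2}\big(\|a\|_{\L2Om}+\|b\|_{\L2Om}\big),
\end{equation*}
which is the claim. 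Alternatively, one can redo the spectral computation directly: the matrix bound \eqref{bound:regular:mat:G} yields $|(\tu(0),\fy_j)|+|(\partial_t\tu(0),\fy_j)|\le \tfrac{c\lambda_j}{1+\gamma\lambda_j}\big(|(g_1,\fy_j)|+|(g_2,\fy_j)|\big)$, and the elementary inequality $(1+\gamma\lambda_j)^2\ge(\gamma\lambda_j)^q$ valid for $0\le q\le 2$ extracts the factor $\gamma^{-q}$ after multiplying by $\lambda_j^q$ and summing against $\|g_i\|_{\dH 2}^2=\sum_j\lambda_j^2(g_i,\fy_j)^2$. I do not expect a genuine obstacle here; the only points requiring care are that one must route the estimate through the $\dH 2$-smoothness of the forward traces $g_i$ rather than through the merely $L^2$ data $a,b$ (this is precisely what upgrades the $\gamma^{-1}$ scaling of the inverse operator to the sharper $\gamma^{-q/2}$), and that one must respect the index range $q\in[0,2]$ under which both the regularity lemma and the operator lemma apply.
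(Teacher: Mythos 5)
Your proposal is correct and is essentially the argument the paper intends: the paper gives no explicit proof (it only says ``using the similar argument'' as in Lemma \ref{lem:regular:estimate:ope}), and your route --- reading off $(\tu(0),\partial_t\tu(0))=(\gamma\I+\GG(T_1,T_2))^{-1}(g_1,g_2)^T$, upgrading $g_i=u(T_i)$ to $\dH 2$ via Lemma \ref{lem:reg-u}(i) with $T_i\ge M(\lambda_1)>0$, and then invoking the second estimate of Lemma \ref{lem:regular:estimate:ope} with source index $2$ and target index $q$ so that the exponent $-(1+\frac{p-q}{2})$ becomes $-q/2$ --- is exactly the spectral computation $\lambda_j^{q}/(1+\gamma\lambda_j)^{2}\le\gamma^{-q}$ that this ``similar argument'' amounts to. The index check $0\le q\le 2\le 2+q$ and the closing step $\|g_i\|_{\dH 2}\le c(\|a\|_{\L2Om}+\|b\|_{\L2Om})$ are both handled correctly, so nothing is missing.
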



Lemma \ref{lem:regular:estimate:ope} with $p=q=0$ immediately leads to the estimate for $\tu^\delta - \tu$.

\begin{corollary} \label{cor:est-tu-tud}
Let $M(\lambda_1)$ be the constant defined in Lemma \ref{lem:unique},
and suppose that $T_2 > T_1 \ge  M(\lambda_1)$.
Let $\tud$ and $\tu$ be solutions to \eqref{eqn:back-2} and \eqref{eqn:back-3}, respectively.
Then for any $a, b\in L^2\II$ we have
\begin{equation*}
\| ( \tu^\delta - \tu)(t) \|_{L^2\II} \le  c\, \delta
\min(\gamma^{-1},t^{-\alpha})  \qquad \text{for all} ~~ t\in(0,T].
\end{equation*}
and
\begin{equation*}
\| ( \tu^\delta - \tu)(0) \|_{L^2\II} + \| \partial_t( \tu^\delta - \tu)(0) \|_{L^2\II}\le  c\,\delta\gamma^{-1} .
\end{equation*}
\end{corollary}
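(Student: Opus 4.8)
The plan is to exploit the linearity of the regularized solution operators together with Lemma~\ref{lem:regular:estimate:ope}. By the representation \eqref{eqn:sol-back-3} for $\tud$ and \eqref{eqn:sol-back-4} for $\tu$, both functions are obtained by applying the \emph{same} operator $\FF(t)(\gamma\I+\GG(T_1,T_2))^{-1}$ to the terminal data---noisy in one case, exact in the other. Subtracting, the operator factors out and the difference depends on the data only through $g_1^\delta-g_1$ and $g_2^\delta-g_2$:
\begin{equation*}
(\tud-\tu)(t)=\FF(t)(\gamma\I+\GG(T_1,T_2))^{-1}\begin{bmatrix} g_1^\delta-g_1\\ g_2^\delta-g_2\end{bmatrix}.
\end{equation*}
Since the noise model \eqref{eqn:noise} only guarantees $g_i^\delta-g_i\in\L2Om=\dH0$, I would invoke Lemma~\ref{lem:regular:estimate:ope} in the lowest-regularity case $p=q=0$ (admissible, as $0\le p\le q\le 2+p$ holds there).

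For the first (time-dependent) bound, I apply the first estimate of Lemma~\ref{lem:regular:estimate:ope} with $\ell=0$ and $p=q=0$, obtaining
\begin{equation*}
\|(\tud-\tu)(t)\|_{\L2Om}\le c\,\min(\gamma^{-1},t^{-\alpha})\big(\|g_1^\delta-g_1\|_{\L2Om}+\|g_2^\delta-g_2\|_{\L2Om}\big),
\end{equation*}
and inserting the noise level from \eqref{eqn:noise} yields the claimed $c\,\delta\min(\gamma^{-1},t^{-\alpha})$. For the estimate at $t=0$, the key observation is that subtracting \eqref{eqn:sol-back-2} evaluated at noisy and exact data gives the combined initial-data vector
\begin{equation*}
\begin{bmatrix}(\tud-\tu)(0)\\ \partial_t(\tud-\tu)(0)\end{bmatrix}=(\gamma\I+\GG(T_1,T_2))^{-1}\begin{bmatrix} g_1^\delta-g_1\\ g_2^\delta-g_2\end{bmatrix}.
\end{equation*}
The second estimate of Lemma~\ref{lem:regular:estimate:ope} with $p=q=0$ bounds the $\dH0$-norm of exactly this vector, and its proof controls both components at once, so
\begin{equation*}
\|(\tud-\tu)(0)\|_{\L2Om}+\|\partial_t(\tud-\tu)(0)\|_{\L2Om}\le c\gamma^{-1}\big(\|g_1^\delta-g_1\|_{\L2Om}+\|g_2^\delta-g_2\|_{\L2Om}\big)=c\,\delta\gamma^{-1}.
\end{equation*}

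This is essentially a direct corollary rather than an independent argument, so I do not expect a substantial obstacle. The only points requiring care are to verify that no spatial regularity beyond $L^2$ is needed---which is why the borderline choice $q=0$ is the correct one---and to recognize that the two-component bound of Lemma~\ref{lem:regular:estimate:ope} delivers the two initial conditions $(\tud-\tu)(0)$ and $\partial_t(\tud-\tu)(0)$ simultaneously, rather than requiring separate estimates.
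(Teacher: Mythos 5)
Your argument is exactly the paper's: the paper states this corollary as an immediate consequence of Lemma \ref{lem:regular:estimate:ope} with $p=q=0$, applied by linearity to the difference of the representations \eqref{eqn:sol-back-3} and \eqref{eqn:sol-back-4} (equivalently \eqref{eqn:sol-back-2} for the initial-data vector), with the noise level \eqref{eqn:noise} supplying the factor $\delta$. The proposal is correct and complete.
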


According to Lemma \ref{lem:regular:estimate:ope} we can derive the following estimate of $\tilde u(t) - u(t)$ with $t\in [0,T]$.
\begin{lemma}\label{lem:est-tu-u}
Let $M(\lambda_1)$ be the constant defined in Lemma \ref{lem:unique},
and suppose that $T_2 > T_1 \ge  M(\lambda_1)$.
Let $u(t)$ and $\tu(t)$ be the  solutions of problems \eqref{eqn:fde} and \eqref{eqn:back-3}, respectively.
\begin{itemize}
\item[(i)]
For $a,b\in \dH q$ with $q\in[0,2]$, we have
\begin{equation*}\begin{aligned}
\|(\tu-u)(0)\|_{L^2(\Omega)} +
\|\partial_t (\tu-u)(0)\|_{L^2(\Omega)}
&\le c\gamma^\frac q2
\end{aligned}\end{equation*}
and for all $t\in(0,T]$
\begin{equation*}\begin{aligned}
\|(\tu-u)(t)\|_{L^2(\Omega)}  &\le c\gamma \min(\gamma^{-(1-\frac q2)}, t^{-(1-\frac q2)\alpha}).
\end{aligned}\end{equation*}
\item[(ii)] In case that $a,b\in L^2\II$, we have for any small $s\in(0,1]$
\begin{equation*}
\lim_{\gamma\to 0}
\Big(\|(\tu-u)(0)\|_{\L2Om}  + \|\partial_t(\tu-u)(0)\|_{\dH {-s}} \Big)
= 0.
\end{equation*}
\end{itemize}
\end{lemma}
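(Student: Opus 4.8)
The plan is to reduce both parts to the operator bounds of Lemma~\ref{lem:regular:estimate:ope} through a single algebraic identity, and then to exploit the fact that the two components of the initial data are measured in different norms. By \eqref{eqn:sol-back-4} the initial vector of $\tu$ is $(\gamma\I+\GG)^{-1}\GG\,[a,\,b]^{\top}$, whereas $u$ has initial vector $[a,\,b]^{\top}$; since $\GG-(\gamma\I+\GG)=-\gamma\I$, this gives
\begin{equation*}
(\gamma\I+\GG)^{-1}\GG-I=-\gamma\,(\gamma\I+\GG)^{-1}\I,
\end{equation*}
and therefore, recalling $\I\,[a,\,b]^{\top}=[-a,\,b]^{\top}$,
\begin{equation*}
\begin{bmatrix}(\tu-u)(0)\\ \partial_t(\tu-u)(0)\end{bmatrix}
=-\gamma\,(\gamma\I+\GG)^{-1}\begin{bmatrix}-a\\ b\end{bmatrix},
\qquad
(\tu-u)(t)=-\gamma\,\FF(t)(\gamma\I+\GG)^{-1}\begin{bmatrix}-a\\ b\end{bmatrix}.
\end{equation*}
Because $\|-a\|_{\dH q}=\|a\|_{\dH q}$, the sign structure of $\I$ is harmless and every estimate follows by feeding the data $[-a,\,b]^{\top}$ into Lemma~\ref{lem:regular:estimate:ope}.

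Part (i) is then immediate. For the terminal values I apply the second (purely $\gamma$-weighted) bound of Lemma~\ref{lem:regular:estimate:ope} with $p=0$ and the given $q\in[0,2]$; the prefactor $\gamma$ converts $\gamma^{-(1-q/2)}$ into $\gamma^{q/2}$, and the $\dH 0$-bound on the vector controls both components at once, giving $\|(\tu-u)(0)\|_{L^2\II}+\|\partial_t(\tu-u)(0)\|_{L^2\II}\le c\gamma^{q/2}$. For $t\in(0,T]$ I apply the first bound with $\ell=0$, $p=0$; multiplying by $\gamma$ turns $\min(\gamma^{-(1-q/2)},t^{-\alpha(1-q/2)})$ into $\gamma\min(\gamma^{-(1-q/2)},t^{-(1-q/2)\alpha})$, which is exactly the claimed bound.

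Part (ii) is the substantive part, since at $q=0$ part (i) only yields a uniform bound, not decay, and the two components must be handled by different devices. For the first component, measured in $L^2\II$, no rate is available (this is the genuine ill-posedness), so I would use a density argument: split $[-a,\,b]^{\top}=P_N[-a,\,b]^{\top}+(I-P_N)[-a,\,b]^{\top}$, where $P_N$ is the spectral projection onto $\{\lambda_j\le\lambda_N\}$. The smooth part lies in $\dH 2$, so part~(i) with $q=2$ bounds its contribution by $c\,\gamma\,(\|P_N a\|_{\dH 2}+\|P_N b\|_{\dH 2})\to0$ as $\gamma\to0$ for fixed $N$; the tail is bounded, uniformly in $\gamma$, by the second estimate of Lemma~\ref{lem:regular:estimate:ope} with $p=q=0$, namely $c(\|(I-P_N)a\|_{L^2\II}+\|(I-P_N)b\|_{L^2\II})$, which tends to $0$ as $N\to\infty$. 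The usual $\varepsilon/2$ choice (first $N$, then $\gamma$) gives $\|(\tu-u)(0)\|_{L^2\II}\to0$.

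For the second component the weaker norm $\dH{-s}$ permits a clean direct estimate with no density argument, which is precisely why it is stated in $\dH{-s}$ rather than $L^2\II$. Using the entrywise bound \eqref{bound:regular:mat:G}, the $\fy_j$-coefficient of $\partial_t(\tu-u)(0)$ is dominated by $\gamma\,\frac{c\lambda_j}{1+\gamma\lambda_j}(|(a,\fy_j)|+|(b,\fy_j)|)$, and the negative norm supplies an extra factor $\lambda_j^{-s/2}$, so that
\begin{equation*}
\gamma\,\frac{\lambda_j^{1-s/2}}{1+\gamma\lambda_j}
=\gamma^{s/2}\,\frac{(\gamma\lambda_j)^{1-s/2}}{1+\gamma\lambda_j}\le c_s\,\gamma^{s/2},
\end{equation*}
where I use the boundedness of $x\mapsto x^{1-s/2}/(1+x)$ on $[0,\infty)$ for $s\in(0,1]$. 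Summing over $j$ gives $\|\partial_t(\tu-u)(0)\|_{\dH{-s}}\le c\,\gamma^{s/2}(\|a\|_{L^2\II}+\|b\|_{L^2\II})\to0$, in fact with the explicit rate $\gamma^{s/2}$. I expect the main obstacle to be the first component of part~(ii): unlike everything else it admits no convergence rate, forcing the two-scale ($N\to\infty$ against $\gamma\to0$) argument, and one must also check carefully that the $\lambda_j^{-s/2}$ produced by the $\dH{-s}$ norm is exactly what is needed to extract the $\gamma^{s/2}$ gain in the second component.
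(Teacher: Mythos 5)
Your proof of part (i) coincides with the paper's: the same identity $(\gamma\I+\GG)^{-1}\GG-I=-\gamma(\gamma\I+\GG)^{-1}\I$ followed by Lemma \ref{lem:regular:estimate:ope} with $p=0$, and your observation that the sign pattern of $\I$ is harmless is exactly how the paper uses it. Part (ii) is where you take a genuinely different route. The paper proves $\|(\tu-u)(0)\|_{\L2Om}\to 0$ by time-continuity: it chooses $t_0$ with $\|\tu(t_0)-\tu(0)\|_{\L2Om}+\|u(t_0)-u(0)\|_{\L2Om}<\epsilon/2$ and then applies part (i) at $t=t_0$; for $\partial_t(\tu-u)(0)$ in $\dH{-s}$ it only says the result ``follows from (i) and a shift argument.'' Your spectral truncation (low modes in $\dH 2$ handled by part (i) with $q=2$, tail handled uniformly in $\gamma$ by the second estimate of Lemma \ref{lem:regular:estimate:ope} with $p=q=0$, then the two-scale $\varepsilon/2$ choice) is legitimate because $P_N$ commutes with all operators involved, and it sidesteps a delicate point in the paper's version: $\tu$ depends on $\gamma$, so the admissible $t_0$ must be chosen uniformly in $\gamma$, which the paper does not justify. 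Your direct estimate for the second component, extracting $\gamma^{s/2}$ from $\gamma\lambda_j^{1-s/2}/(1+\gamma\lambda_j)\le c_s\gamma^{s/2}$ via \eqref{bound:regular:mat:G}, is a concrete realization of the paper's unspecified shift argument and yields the explicit rate $c\gamma^{s/2}(\|a\|_{\L2Om}+\|b\|_{\L2Om})$ as a bonus. (Incidentally, since the second estimate of Lemma \ref{lem:regular:estimate:ope} controls both components of the vector, your truncation argument would even give $\|\partial_t(\tu-u)(0)\|_{\L2Om}\to 0$, which is stronger than the stated $\dH{-s}$ convergence; this is consistent with, not contrary to, the claim.) Both approaches are valid; yours is the more self-contained and quantitative of the two.
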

\begin{proof}
Recalling the definition of the operator $\GG(T_1, T_2)$ in \eqref{eqn:sol-2}, we have the representation
\begin{align*}
\begin{bmatrix}
(\tu-u)(0)\\ \partial_t (\tu-u)(0)
\end{bmatrix}
&=
(\gamma \I+\GG(T_1,T_2))^{-1} \GG(T_1,T_2)
\begin{bmatrix}
a\\b
\end{bmatrix}-
\begin{bmatrix}
a\\b
\end{bmatrix}
= -{\gamma (\gamma \I+\GG(T_1,T_2))^{-1}}\I
\begin{bmatrix} a\\b \end{bmatrix}.
\end{align*}
From lemma \ref{lem:regular:estimate:ope} for $p=0$, we have
\begin{equation*}
\begin{aligned}
\|(\tu-u)(0)\|_{\L2Om} + \|\partial_t(\tu-u)(0)\|_{\L2Om}
&\le  c\gamma^\frac q2(\|a\|_{\dH q}+\|b\|_{\dH q}).
\end{aligned}
\end{equation*}
Similarly, we have the following representation to $(\tu-u)(t)$:
\begin{align*}
(\tu-u)(t)
&=
\FF(t) (\gamma \I +\GG(T_1,T_2))^{-1} \GG(T_1,T_2)
\begin{bmatrix} a\\b\end{bmatrix}
-\FF(t)\begin{bmatrix} a\\b \end{bmatrix}
\\
&= -\gamma\FF(t)(\gamma \I +\GG(T_1,T_2))^{-1}  \I \begin{bmatrix} a\\b\end{bmatrix}.
\end{align*}
We apply Lemma \ref{lem:regular:estimate:ope} with $p=0$ again to obtain
\begin{equation*}
\begin{aligned}
\|(\tu-u)(t)\|_{\L2Om} \le c
\gamma \min(\gamma^{-(1-\frac q2)}, t^{-(1-\frac q2)\alpha}).
\end{aligned}
\end{equation*}
Now we show the estimate (ii) for $q=0$.
In case that $a,b\in L^2\II$, we know that $\tilde u, u\in C([0,T];L^2\II)$. Then for any small $\epsilon$, we choose $t_0$ small enough such that
$$  \|  \tilde u(t_0) - \tilde u(0) \|_{L^2\II}  + \|  u(t_0) -  u(0) \|_{L^2\II} < \epsilon/2. $$
Then by the estimate in (i), we may find $\gamma_0$ small enough such that
$$  \|  \tilde u(t_0)  -  u(t_0) \|_{L^2\II} < \epsilon/2\quad \mbox{for all} ~\gamma<\gamma_0. $$
By triangle inequality , we obtain that for any $\gamma<\gamma_0$
$$  \|  \tilde u(0)  -  u(0) \|_{L^2\II} < \epsilon. $$
Theqrefore, $\tilde u(0)$ converges to $u(0)$ in $L^2$-sense, as $\gamma \rightarrow 0$.
Finally, the convergence of $\partial_t \tilde u(0)$ in $H^{-s}$ follows from (i) and a  shift argument.
\end{proof}

Combining Corollary \ref{cor:est-tu-tud} and Lemma \ref{lem:est-tu-u}, we obtain the following convergence result.

\begin{theorem}\label{thm:est-tud-u}
Let $M(\lambda_1)$ be the constant defined in Lemma \ref{lem:unique},
and suppose that $T_2 > T_1 \ge  M(\lambda_1)$.
Let $u(t)$ and $\tud(t)$ be the  solutions of problems \eqref{eqn:fde} and \eqref{eqn:back-2}, respectively.
\begin{itemize}
\item[(i)]
For $a,b\in \dH q$ with $q\in[0,2]$, we have
\begin{equation*}\begin{aligned}
\|(\tud-u)(0)\|_{L^2(\Omega)} +
\|\partial_t (\tud-u)(0)\|_{L^2(\Omega)}
&\le c\Big(\gamma^\frac q2+\delta\gamma^{-1}\Big)  .
\end{aligned}\end{equation*}
and for all $t\in(0,T]$
\begin{equation*}\begin{aligned}
\|(\tud-u)(t)\|_{L^2(\Omega)}  &\le c\big(\gamma  \min(\gamma^{-(1-\frac{q}{2})},t^{-\alpha(1-\frac{q}{2})}) + \delta
\min(\gamma^{-1},t^{-\alpha}) \big).
\end{aligned}\end{equation*}
\item[(ii)] In case that $a,b\in L^2\II$, we have for any small $s\in(0,1]$
\begin{equation*}
 \|(\tud-u)(0)\|_{\L2Om}  + \|\partial_t(\tud-u)(0)\|_{\dH {-s}}\rightarrow 0\quad \text{for}~~ \delta,\gamma\rightarrow0, \frac{\delta}{\gamma}\rightarrow0.
 \end{equation*}
\end{itemize}
\end{theorem}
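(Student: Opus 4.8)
The plan is to split the total error through the auxiliary function $\tu$ defined in \eqref{eqn:sol-back-4}, which solves the quasi boundary value problem \eqref{eqn:back-3} driven by the \emph{exact} (noise-free) data. Writing
\begin{equation*}
\tud - u = (\tud - \tu) + (\tu - u),
\end{equation*}
the first summand isolates the effect of the data noise and is already controlled by Corollary \ref{cor:est-tu-tud}, whereas the second is the pure regularization error controlled by Lemma \ref{lem:est-tu-u}. Every bound in the theorem then follows from the triangle inequality, so no genuinely new estimate has to be produced.

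For part (i) I would simply add the two sets of bounds. At $t=0$, Corollary \ref{cor:est-tu-tud} supplies $\|(\tud-\tu)(0)\|_{\L2Om}+\|\partial_t(\tud-\tu)(0)\|_{\L2Om}\le c\delta\gamma^{-1}$, while Lemma \ref{lem:est-tu-u}(i) supplies $\|(\tu-u)(0)\|_{\L2Om}+\|\partial_t(\tu-u)(0)\|_{\L2Om}\le c\gamma^{q/2}$; summing gives the asserted $c(\gamma^{q/2}+\delta\gamma^{-1})$. For $t\in(0,T]$ the pointwise $L^2$ estimate is obtained in the same way, combining the noise bound $c\delta\min(\gamma^{-1},t^{-\alpha})$ with the regularization bound $c\gamma\min(\gamma^{-(1-q/2)},t^{-(1-q/2)\alpha})$.

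For part (ii) the same decomposition is used, but now the regularization term is handled by the qualitative limit in Lemma \ref{lem:est-tu-u}(ii) rather than by a convergence rate. For the displacement, $\|(\tud-u)(0)\|_{\L2Om}\le c\delta\gamma^{-1}+\|(\tu-u)(0)\|_{\L2Om}$, in which the first term vanishes because $\delta/\gamma\to 0$ and the second because $\gamma\to 0$. For the velocity, measured in the weaker norm $\dH{-s}$, I would first invoke the continuous embedding $\|v\|_{\dH{-s}}\le c\|v\|_{\L2Om}$ (valid for $s\ge 0$, since $\lambda_j\ge\lambda_1>0$) to convert the $L^2$ noise estimate of Corollary \ref{cor:est-tu-tud} into a $\dH{-s}$ estimate, so that $\|\partial_t(\tud-u)(0)\|_{\dH{-s}}\le c\delta\gamma^{-1}+\|\partial_t(\tu-u)(0)\|_{\dH{-s}}$, and both terms again tend to zero under the stated joint limits.

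The argument is essentially routine once Corollary \ref{cor:est-tu-tud} and Lemma \ref{lem:est-tu-u} are available; the only step needing care is the norm mismatch in part (ii), where the noise estimate is known only in $L^2$ but the target norm for the velocity component is $\dH{-s}$. This is exactly where the embedding $L^2\hookrightarrow\dH{-s}$ enters, and it also makes transparent why one should not expect an $L^2$ limit for $\partial_t(\tud-u)(0)$ from merely $L^2$ data: the regularization error $\partial_t(\tu-u)(0)$ itself converges only in the weaker topology.
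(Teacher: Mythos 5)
Your proposal is correct and follows essentially the same route as the paper, which simply combines Corollary \ref{cor:est-tu-tud} and Lemma \ref{lem:est-tu-u} via the triangle inequality applied to the splitting $\tud-u=(\tud-\tu)+(\tu-u)$. Your explicit remark on the embedding $L^2(\Omega)\hookrightarrow \dH{-s}$ for the velocity component in part (ii) is a valid and welcome clarification of a step the paper leaves implicit.
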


\begin{remark}\label{rem:est-tud-u}
To approximate $u(t)$ with $t>0$, Theorem \ref{thm:est-tud-u} indicates an optimal regularized parameter $\gamma\sim \delta$, and the error is of the order $O(\delta)$
which is independent of the smoothness of initial data. Meanwhile, for $t=0$, the choice $\gamma\sim \delta^{\frac{2}{q+2}}$ leads to the optimal approximation $O(\delta^\frac{q}{q+2})$
if $a,b\in \dH q$ with $q\in(0,2]$.
\end{remark}

\section{Spatially semidiscrete scheme and error analysis}\label{sec:semi}
In this section, we shall propose and analyze a spatially semidiscrete scheme for solving the backward diffusion wave problem.
The semidiscrete scheme would give an insite view
to understand the role of the regularity of problem data
and plays an important role in the analysis of fully discrete scheme.

\subsection{Semidiscrete scheme for solving direct problem}

Let ${\{\mathcal{T}_h\}}_{0<h<1}$ be a family
of shape regular and quasi-uniform partitions of the domain $\Omega$ into $d$-simplexes, called
finite elements, with $h$ denoting the maximum diameter of the elements.
We consider the finite element space $X_h$ defined by
\begin{equation}\label{eqn:fem-space}
  X_h =\left\{\chi\in C(\bar\Omega)\cap H_0^1: \ \chi|_{K}\in P_1(K),
 \,\,\,\,\forall K \in \mathcal{T}_h\right\}
\end{equation}
where $P_1(K)$ denotes the space of linear polynomials on $K$.
Then we define the $L^2(\Omega)$ projection $P_h:L^2(\Omega)\to X_h$ and
Ritz projection $R_h:H_0^1(\Omega)\to X_h$, respectively, by
\begin{equation*}
  \begin{aligned}
    (P_h \psi,\chi)   =(\psi,\chi) ~~\forall \chi\in X_h,\psi\in L^2(\Omega) ~~\text{and}~~
    (\nabla R_h \psi,\nabla\chi)   =(\nabla \psi,\nabla\chi) ~~ \forall \chi\in X_h, \psi\in  H_0^1(\Omega).
  \end{aligned}
\end{equation*}
Then $P_h$ and $R_h$ satisfies the following approximation properties \cite[Chapter 1]{Thomee:2006}
\begin{align}
\label{estimate:PR}
\|P_hv-v\|_{\L2Om}+\|R_hv-v\|_{\L2Om}\le ch^2\|v\|_{H^2(\Omega)},\ \forall v\in \dH 2.
\end{align}

Then the semidiscrete standard Galerkin FEM for problem \eqref{eqn:fde} reads: find $u_h(t)\in X_h$ such that
\begin{align}\label{eqn:semi-0}
 \begin{aligned}
(\Dal u_h,\chi) + (\nabla u_h,\nabla \chi) & = (f,\chi),\ \forall \chi \in X_h,\ T\ge t>0,\\
u_h(0) = P_ha ,~~
\partial_t u_h(0) &= P_hb.\\
\end{aligned}
\end{align}
By introducing the discrete Laplacian $-\Delta_h:\, X_h\to X_h$ such that
\begin{equation*}
(-\Delta_h\xi,\chi) = (\nabla\xi,\nabla\chi),\ \forall\xi,\chi\in X_h,
\end{equation*}
spatially semidiscrete problem \eqref{eqn:semi-0} could be written as
\begin{align}\label{eqn:semi-1}
 \begin{aligned}
\Dal u_h-\Delta_hu_h& = f_h,\ T\ge t>0,\\
u_h(0) = P_ha ,~~
\partial_t u_h(0) &= P_hb.\\
\end{aligned}
\end{align}
Let $\{\lambda_j^h,\fy_j^h\}_{j=1}^J$ be eigenpairs of $-\Delta_h$ with $\lambda_1^h\le \lambda_2^h\le \ldots\lambda_J^h$.
By the Courant minimax principle and the fact that $X_h\subset H_0^1\II$, we know
\begin{equation}\label{eqn:minmax}
\lambda_1^h = \min_{\phi_h\in X_h}\frac{(-\Delta_h \phi_h, \phi_h)}{\|  \phi_h \|_{L^2\II}^2}
=  \min_{\phi_h\in X_h}\frac{(\nabla \phi_h, \nabla \phi_h)}{\|  \phi_h \|_{L^2\II}^2}
 \ge \min_{\phi\in H_0^1}\frac{(\nabla \phi, \nabla \phi)}{\|  \phi \|_{L^2\II}^2}  = \lambda_1.
\end{equation}
Analogue to \eqref{eqn:sol-rep}, the solution to the semidiscrete problem \eqref{eqn:semi-1} could be written as
\begin{equation}\label{eqn:sol-rep-semi}
\begin{aligned}
u_h(t) &:=
\FF_h(t) \begin{bmatrix}
P_h a\\ P_h b
\end{bmatrix}
+ \int_0^t E_h(t-s)P_h f_h(s)\, \d s \\
&:= \begin{bmatrix}F_h &  \bar F_h  \end{bmatrix}
\begin{bmatrix}
P_h a\\ P_h b
\end{bmatrix}
+  \int_0^t E_h(t-s)P_h f_h(s)\, \d s.
\end{aligned}
\end{equation}
where the solution operators $ F(t)$, $\bar F(t)$ and $E(t)$ are respectively defined by
\begin{equation}\label{eqn:FE-ML-semi}
\begin{aligned}
F_h(t) v_h        &=  \sum_{j=1}^J E_{\alpha,1}(-\lambda_j^h t^\alpha)(v_h,\fy_j^h)\fy_j^h,
\quad \bar F_h(t) v_h =  \sum_{j=1}^J tE_{\alpha,2}(-\lambda_j^h t^\alpha)(v_h,\fy_j^h)\fy_j^h,\\
E_h(t) v_h         &=  \sum_{j=1}^J t^{\alpha-1}E_{\alpha,\alpha}(-\lambda_j^h t^\alpha)(v_h,\fy_j^h)\fy_j^h
\end{aligned}
\end{equation}
for any $v_h \in X_h$. By Laplace Transform, we have the following integral representations of the solution operators:
\begin{equation}\label{eqn:FE-LAP-semi}
\begin{aligned}
F_h(t) &= \frac{1}{2\pi i }\int_\contour e^{zt} z^{\alpha-1}(z^\alpha-\Delta_h)^{-1}dz,
\quad \bar F_h(t) &= \frac{1}{2\pi i }\int_\contour e^{zt} z^{\alpha-2}(z^\alpha-\Delta_h)^{-1}dz,\\
E_h(t) &= \frac{1}{2\pi i}\int_\contour  e^{zt} (z^\alpha-\Delta_h)^{-1} dz.
 \end{aligned}
\end{equation}

The following Lemma provides an error estimate of the semidiscrete approximation \eqref{eqn:semi-1} with trivial source $f\equiv 0$.
See \cite[Theorem 3.2]{JinLazarovZhou:SISC2016} for detailed proof.

\begin{lemma}
\label{lem:forward-error}
Let $u$ and $u_h$ are the solutions to \eqref{eqn:fde} and \eqref{eqn:semi-1}, respectively, with $a,b\in \dH q$ and
$f\equiv0$.  Then there holds that
\begin{equation*}
\begin{aligned}
\| {(u-u_h)(t)}\|_\L2Om&\le ch^2t^{-\alpha(2-q)/2} \left( \|a\|_{\dH q} + t \|b\|_{\dH q}\right).
\end{aligned}
\end{equation*}
\end{lemma}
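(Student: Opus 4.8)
The plan is to reduce the error to a resolvent error estimate and then integrate along the contour $\contour$. Since $f\equiv0$, the representations \eqref{eqn:sol-rep} and \eqref{eqn:sol-rep-semi} give
\[
(u-u_h)(t) = \big(F(t)a - F_h(t)P_h a\big) + \big(\bar F(t)b - \bar F_h(t)P_h b\big),
\]
and by the Laplace-transform representations \eqref{eqn:FE-LAP} and \eqref{eqn:FE-LAP-semi} each piece is a contour integral; for instance
\[
F(t)a - F_h(t)P_h a = \frac{1}{2\pi i}\int_\contour e^{zt}z^{\alpha-1}\,w(z)\,dz, \qquad w(z) := \big[(z^\alpha-\Delta)^{-1} - (z^\alpha-\Delta_h)^{-1}P_h\big]a .
\]
Thus everything hinges on bounding $\|w(z)\|_{\L2Om}$ along $\contour$, for the two endpoint data regularities $q=0$ and $q=2$.

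To estimate $w(z)$, I would first record the sectorial resolvent bound $\|(z^\alpha-\Delta)^{-1}\|\le c|z|^{-\alpha}$ valid for $z\in\contour$, which holds because $\theta<\pi/\alpha$ keeps $z^\alpha$ off the negative real axis. Setting $v=(z^\alpha-\Delta)^{-1}a$ and $v_h=(z^\alpha-\Delta_h)^{-1}P_h a$, I split $v-v_h=(v-R_hv)+(R_hv-v_h)=:\rho+\vartheta$, use the Galerkin orthogonality of the Ritz projection, and test the resulting equation for $\vartheta\in X_h$ with $\vartheta$ itself. Taking real and imaginary parts to exploit the sector, I expect $\|\vartheta\|_{\L2Om}\le c\|\rho\|_{\L2Om}\le ch^2\|v\|_{\dH2}$ by \eqref{estimate:PR}. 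Combined with $\|v\|_{\dH2}=\|\Delta v\|\le|z|^\alpha\|v\|+\|a\|\le c\|a\|_{\L2Om}$, this gives the rough bound $\|w(z)\|_{\L2Om}\le ch^2\|a\|_{\L2Om}$, uniform in $z$. For smooth data I would sharpen this using the resolvent identity $(z^\alpha-\Delta)^{-1}=z^{-\alpha}[I+(z^\alpha-\Delta)^{-1}\Delta]$ (and its discrete analogue), which transfers two derivatives onto the data at the cost of a factor $z^{-\alpha}$, yielding $\|w(z)\|_{\L2Om}\le ch^2|z|^{-\alpha}\|a\|_{\dH2}$.

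Feeding the rough bound into the contour integral and choosing $\sigma\sim t^{-1}$, the elementary estimate $\int_\contour|e^{zt}||z|^{\alpha-1}\,|dz|\le ct^{-\alpha}$ gives $\|F(t)a-F_h(t)P_h a\|_{\L2Om}\le ch^2t^{-\alpha}\|a\|_{\L2Om}$, matching $q=0$; feeding the sharpened bound and using $\int_\contour|e^{zt}||z|^{-1}|dz|\le c$ gives the uniform-in-$t$ estimate $ch^2\|a\|_{\dH2}$, matching $q=2$. An interpolation argument between these two endpoints then covers all $q\in[0,2]$ and produces the weight $t^{-\alpha(2-q)/2}$. The $b$-term is handled identically, the only change being the weight $z^{\alpha-2}$ in place of $z^{\alpha-1}$, which replaces $t^{-\alpha}$ by $t^{1-\alpha}$ and thereby accounts for the extra factor $t$ multiplying $\|b\|_{\dH q}$.

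The delicate point is the resolvent error estimate uniformly in $z$ along $\contour$: one must make the energy argument rigorous when $z^\alpha$ is complex, taking real and imaginary parts and using $|\arg z^\alpha|=\alpha|\arg z|<\pi$ to recover coercivity with constants independent of $z$, and one must secure the sharpened $|z|^{-\alpha}$ gain for $a\in\dH2$. Once both endpoint resolvent bounds are established, the contour integration and the interpolation in $q$ are routine.
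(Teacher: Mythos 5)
Your proposal is correct and follows essentially the same route as the paper, which does not prove this lemma in-line but defers to \cite[Theorem 3.2]{JinLazarovZhou:SISC2016}: that proof is built on exactly the same ingredients, namely the uniform resolvent error bound $\|[(z^\alpha-\Delta)^{-1}-(z^\alpha-\Delta_h)^{-1}P_h]v\|_{\L2Om}\le ch^2\|v\|_{\L2Om}$ on the sector (proved by the Ritz-projection splitting and a complex energy argument), its sharpened $|z|^{-\alpha}$ variant for $\dH2$ data, contour integration with $\sigma\sim t^{-1}$, and interpolation in $q$. No gaps.
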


\subsection{Semidiscrete scheme for solving backward problem}
In order to solve the inverse problem,
we apply the following regularized semidiscrete scheme: find $\tu_h^\delta(t)\in X_h$ such that
\begin{align}\label{eqn:back-semi-1}
\begin{aligned}
\Dal \tu^\delta_h-\Delta_h\tu^\delta_h& = 0,\ T\ge t>0,\\
 -\gamma  \tu_h^\delta(0) + \tu_h^\delta(T_1) &= P_hg_1^\delta , ~~
\gamma \partial_t \tu^\delta_h(0) +  \tu^\delta_h(T_2)  = P_hg_2^\delta.\\
\end{aligned}
\end{align}
We define the operator $\GG_h(T_1, T_2)$ as
\begin{equation}\label{eqn:GGh}
\GG_h(T_1, T_2) = \begin{bmatrix}  F_h(T_1)  &\bar F_h(T_1) \\
F_h(T_2) & \bar F_h(T_2) \end{bmatrix}.
\end{equation}
Then from \eqref{eqn:sol-rep-semi} the solutions can be represented as
\begin{equation} \label{eqn:back-semi-1-sol}
\begin{bmatrix}
\tu^\delta_h (0)\\ \partial_t \tu^\delta_h(0)
\end{bmatrix}
=  (\gamma \I +\GG_h(T_1, T_2))^{-1}
\begin{bmatrix}
P_hg_1^\delta\\P_hg_2^\delta
\end{bmatrix}\quad \text{and}\quad \tu^\delta_h (t)  = \FF_h(t) (\gamma \I +\GG_h(T_1, T_2))^{-1}
\begin{bmatrix}
P_hg_1^\delta\\P_hg_2^\delta
\end{bmatrix},
\end{equation}
where the operator $\I$ is given by \eqref{eqn:I}.
Meanwhile, we shall introduce an auxiliary function $\tu_h(t)$, a semidiscrete solution satisfying
\begin{align}\label{eqn:back-semi-2}
 \begin{aligned}
\Dal \tu_h -\Delta_h\tu_h& = 0,\ T\ge t>0,\\
 -\gamma  \tu_h(0) + \tu_h(T_1) &= P_hg_1,\\
\gamma \partial_t \tu_h(0)+  \tu_h(T_2) &= P_hg_2.\\
\end{aligned}
\end{align}
Then we would write the solutions as
\begin{equation}\label{eqn:back-semi-2-sol}
\begin{bmatrix}
\tu_h (0)\\ \partial_t \tu_h(0)
\end{bmatrix}
=  (\gamma \I +\GG_h(T_1, T_2))^{-1}
\begin{bmatrix}
P_hg_1\\P_hg_2
\end{bmatrix}\quad \text{and}\quad \tu_h (t)  = {\FF_h(t)} (\gamma \I +\GG_h(T_1, T_2))^{-1}
\begin{bmatrix}
P_hg_1 \\P_hg_2
\end{bmatrix}.
\end{equation}

The next lemma confirms the invertibility of the operator $\gamma \I +\GG_h(T_1, T_2)$.

\begin{lemma}\label{lem:stab-semi}
Let $M(\lambda_1)$ be the constant defined in Lemma \ref{lem:unique},
and suppose that $T_2 > T_1 \ge  M(\lambda_1)$.
Then the operator $\gamma \I +\GG_h(T_1, T_2)$ is invertible.
Meanwhile, there holds for all $v_h, w_h \in X_h$
\begin{equation*}
\left\|\FF_h(t)(\gamma \I +\GG_h(T_1,T_2))^{-1}\begin{bmatrix}
v_h\\ w_h
\end{bmatrix}
\right\|_{L^2\II} \le  c \min(\gamma^{-1},t^{-\alpha} ) \Big( \|  v_h \|_{L^2\II} + \|  w_h \|_{L^2\II}\Big)
\end{equation*}
Meanwhile, we have
\begin{equation*}
\left\|(\gamma \I +\GG_h(T_1,T_2))^{-1}\begin{bmatrix}
v_h\\w_h
\end{bmatrix}
\right\|_{L^2\II} \le  c\gamma^{-1} \Big(\|v_h\|_{L^2\II} +\|w_h\|_{L^2\II}\Big)\begin{bmatrix}
1\\1
\end{bmatrix}.
\end{equation*}
\end{lemma}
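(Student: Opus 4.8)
The plan is to replicate, termwise, the proof of the continuous counterpart Lemma~\ref{lem:regular:estimate:ope} in the special case $p=q=0$, with the spectral data $\{\lambda_j,\fy_j\}$ replaced by the discrete eigenpairs $\{\lambda_j^h,\fy_j^h\}_{j=1}^J$ of $-\Delta_h$. The single ingredient that makes this transfer legitimate is the spectral comparison \eqref{eqn:minmax}, which yields $\lambda_j^h\ge\lambda_1^h\ge\lambda_1$ for every $j$. Consequently all discrete eigenvalues lie in the range $\lambda\ge\lambda_1$ on which every Mittag--Leffler bound and sign/decay estimate derived in Section~\ref{sec:stab} remains valid.

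First I would settle invertibility. Since $\{\fy_j^h\}_{j=1}^J$ is an $L^2\II$-orthonormal basis of $X_h$ diagonalizing $-\Delta_h$, the operator $\gamma\I+\GG_h(T_1,T_2)$ is block diagonal in this basis and acts on the $j$-th eigenspace through the $2\times2$ matrix
\begin{equation*}
\begin{bmatrix}
-\gamma + E_{\alpha,1}(-\lambda_j^h T_1^\alpha) & T_1 E_{\alpha,2}(-\lambda_j^h T_1^\alpha)\\
E_{\alpha,1}(-\lambda_j^h T_2^\alpha) & \gamma + T_2 E_{\alpha,2}(-\lambda_j^h T_2^\alpha)
\end{bmatrix},
\end{equation*}
whose determinant equals $\tilde\psi(T_1,T_2;\lambda_j^h)$ from \eqref{form:regular:determinant}. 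Because $\lambda_j^h\ge\lambda_1$ and $T_2>T_1\ge M(\lambda_1)$, and because of assumption \eqref{ass:z0}, the bound \eqref{eqn:determinant-2-b} applies verbatim with $\lambda_j$ replaced by $\lambda_j^h$, so $\tilde\psi(T_1,T_2;\lambda_j^h)<0$. Hence each $2\times2$ block is invertible, its inverse being the usual adjugate over the (nonzero) determinant, and therefore $\gamma\I+\GG_h(T_1,T_2)$ is invertible on $X_h$.

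Next I would transcribe the two quantitative bounds. Writing $\zeta_h(t)=\FF_h(t)(\gamma\I+\GG_h(T_1,T_2))^{-1}\begin{bmatrix}v_h\\w_h\end{bmatrix}$ in the discrete eigenbasis, the discrete analogues of the matrix estimates \eqref{bound:regular:mat:F:t} and \eqref{bound:regular:mat:G} (which again follow from Lemma~\ref{lem:ml} and the bound \eqref{eqn:determinant-2-b} evaluated at $\lambda_j^h$) give, for each $j$,
\begin{equation*}
(\zeta_h(t),\fy_j^h)^2 \le c\left(\frac{\lambda_j^h}{(1+\gamma\lambda_j^h)(1+\lambda_j^h t^\alpha)}\right)^2\big((v_h,\fy_j^h)^2+(w_h,\fy_j^h)^2\big).
\end{equation*}
Using the elementary inequality $\frac{\lambda_j^h}{(1+\gamma\lambda_j^h)(1+\lambda_j^h t^\alpha)}\le\min(\gamma^{-1},t^{-\alpha})$ and summing over $j$ yields the first claimed bound; discarding the $\FF_h(t)$ factor and using $\frac{\lambda_j^h}{1+\gamma\lambda_j^h}\le\gamma^{-1}$ gives the second. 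Summability is automatic since $X_h$ is finite dimensional.

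I do not anticipate a genuine obstacle: the only substantive point is that the continuous estimates survive the passage to the discrete spectrum, and this is guaranteed once and for all by \eqref{eqn:minmax} together with the already-established bound \eqref{eqn:determinant-2-b}. The finite-dimensionality of $X_h$ even removes the convergence concerns present in the continuous argument, so the proof reduces to a verbatim, termwise repetition.
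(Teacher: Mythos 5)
Your proposal is correct and follows essentially the same route as the paper: the paper likewise derives $\tilde\psi(T_1,T_2;\lambda_j^h)<0$ from \eqref{eqn:est-psi}, \eqref{eqn:minmax} and \eqref{ass:z0} to get invertibility, and then obtains the two stability bounds by repeating the argument of Lemma~\ref{lem:regular:estimate:ope} with $p=q=0$ on the discrete spectrum. Your write-up simply makes explicit the block-diagonalization and the elementary inequality $\lambda/((1+\gamma\lambda)(1+\lambda t^\alpha))\le\min(\gamma^{-1},t^{-\alpha})$ that the paper leaves implicit.
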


\begin{proof}
By \eqref{eqn:est-psi} and the the fact \eqref{eqn:minmax}, we obtain for any $1\le j\le J$
\begin{equation}\label{eqn:esi-psi-h}
 |\psi(T_1,T_2;\lambda_j^h)| \ge \left|\frac{c(T_2-T_1)}{\Gamma(1-\alpha)\Gamma(2-\alpha)}
 \frac{1}{(\lambda_j^h)^2T_1^\alpha T_2^\alpha}\right| > 0 ,
 \end{equation}
where the constant $c$ is independent of $\lambda_j^h$, $T_1$ and $T_2$. Then by the assumption \eqref{ass:z0}
we have
\begin{equation}\label{eqn:esi-tpsi-h}
\begin{aligned}
 \tilde \psi(T_1,T_2;\lambda_j^h)  &=
\psi(T_1,T_2;\lambda_j^h)- \gamma^2+\gamma [E_{\alpha,1}(-\lambda_j^h T_1^\alpha)-T_2E_{\alpha,2}(-\lambda_j^hT_2^\alpha)]\\
&\le   -c\Big( (\lambda_j^h)^{-2} +  \gamma (\lambda_j^h)^{-1} +\gamma^2 \Big) <0.
\end{aligned}
\end{equation}
and hence the operator $\gamma\I + \GG_h(T_1, T_2)$  is invertible. Finally, the desired two stability estimates follows by the
same argument in the proof of Lemma \ref{lem:regular:estimate:ope} with $p=q=0$.
\end{proof}

This lemma together with the representations \eqref{eqn:back-semi-1-sol} and \eqref{eqn:back-semi-2-sol}
implies the following estimate
\begin{corollary}\label{cor:err-tuh-tuhd}
Suppose that $M(\lambda_1)$ is the constant defined in Lemma \ref{lem:unique},
and $T_2 > T_1 \ge  M(\lambda_1)$.
Let $\tu_h^\delta(t)$ and $\tu_h(t)$ be the solutions of problems \eqref{eqn:back-semi-1} and \eqref{eqn:back-semi-2}.
Then there holds for all $0<t \le T$
\begin{equation*}
 \|(\tu_h^\delta-\tu_h)(t)\|_{\L2Om} \le c\delta \min(\gamma^{-1}, t^{-\alpha}) \quad \text{and}\quad
\begin{bmatrix}
\|(\tu_h^\delta-\tu_h)(0)\|_{\L2Om}\\ \|\partial_t(\tu_h^\delta-\tu_h)(0)\|_{\L2Om}
\end{bmatrix}
\le c\delta \gamma^{-1} \begin{bmatrix}1\\1\end{bmatrix},
\end{equation*}
where $c$ is independent on $\delta$, $\gamma$, $h$ and $t$.
\end{corollary}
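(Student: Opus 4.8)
The plan is to exploit the fact that $\tu_h^\delta$ and $\tu_h$ are produced by one and the same linear solution map acting on different data, so that their difference is controlled entirely by the stability bounds of Lemma~\ref{lem:stab-semi}. Concretely, subtracting the representations \eqref{eqn:back-semi-1-sol} and \eqref{eqn:back-semi-2-sol} and using the linearity of $(\gamma\I+\GG_h(T_1,T_2))^{-1}$ and of $\FF_h(t)$, I would write
\begin{equation*}
(\tu_h^\delta-\tu_h)(t)=\FF_h(t)(\gamma\I+\GG_h(T_1,T_2))^{-1}\begin{bmatrix} P_h(g_1^\delta-g_1)\\ P_h(g_2^\delta-g_2)\end{bmatrix},
\end{equation*}
and likewise the vector $[(\tu_h^\delta-\tu_h)(0),\ \partial_t(\tu_h^\delta-\tu_h)(0)]^\top$ equals $(\gamma\I+\GG_h(T_1,T_2))^{-1}[P_h(g_1^\delta-g_1),\ P_h(g_2^\delta-g_2)]^\top$, i.e.\ the same inverse operator without the prefactor $\FF_h(t)$. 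This reduces the corollary to bounding the output of $\FF_h(t)(\gamma\I+\GG_h)^{-1}$ and of $(\gamma\I+\GG_h)^{-1}$ when fed the data difference $P_h(g_i^\delta-g_i)$.

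Next I would invoke Lemma~\ref{lem:stab-semi} with $v_h=P_h(g_1^\delta-g_1)$ and $w_h=P_h(g_2^\delta-g_2)$, both of which lie in $X_h$ since $P_h$ maps into $X_h$. The first estimate of that lemma yields
\begin{equation*}
\|(\tu_h^\delta-\tu_h)(t)\|_{\L2Om}\le c\min(\gamma^{-1},t^{-\alpha})\big(\|P_h(g_1^\delta-g_1)\|_{\L2Om}+\|P_h(g_2^\delta-g_2)\|_{\L2Om}\big),
\end{equation*}
while the second estimate bounds $\|(\tu_h^\delta-\tu_h)(0)\|_{\L2Om}+\|\partial_t(\tu_h^\delta-\tu_h)(0)\|_{\L2Om}$ by $c\gamma^{-1}$ times the very same sum of projected data norms.

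Finally, since $P_h$ is the $L^2(\Omega)$ projection it is a contraction on $\L2Om$, so $\|P_h(g_i^\delta-g_i)\|_{\L2Om}\le\|g_i^\delta-g_i\|_{\L2Om}$, and the noise assumption \eqref{eqn:noise} gives $\|g_i^\delta-g_i\|_{\L2Om}=\delta$ for $i=1,2$. Substituting these bounds collapses the sum of data norms to $2\delta$ and produces exactly the claimed estimates $c\delta\min(\gamma^{-1},t^{-\alpha})$ and $c\delta\gamma^{-1}$. There is no genuine analytic obstacle here: all of the difficulty has already been absorbed into Lemma~\ref{lem:stab-semi} (which in turn rests on the spectral bound \eqref{eqn:esi-tpsi-h} for $\tilde\psi(T_1,T_2;\lambda_j^h)$). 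The only points demanding care are (i) invoking linearity so that the identical regularized inverse operator appears in the representation of both solutions, and (ii) recording the $L^2$-stability of $P_h$ so that projecting the noisy data does not inflate the noise level $\delta$.
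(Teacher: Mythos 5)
Your proposal is correct and follows exactly the route the paper intends: the paper states this corollary as an immediate consequence of Lemma \ref{lem:stab-semi} together with the representations \eqref{eqn:back-semi-1-sol} and \eqref{eqn:back-semi-2-sol}, and your write-up simply fills in the (routine) details of subtracting the two representations, applying the lemma to $v_h=P_h(g_1^\delta-g_1)$, $w_h=P_h(g_2^\delta-g_2)$, and using the $L^2$-contractivity of $P_h$ with the noise assumption \eqref{eqn:noise}. No gaps.
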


Next, we aim to derive a bound for the discretization error $\tu_h - \tu$. To this end, we need the following preliminary estimate.
\begin{lemma}\label{lem:est-tu-h}
Suppose that $M(\lambda_1)$ is the constant defined in Lemma \ref{lem:unique},
and $T_2 > T_1 \ge  M(\lambda_1)$.
Let $\tu$ be the solution to the backward regularization problem
\eqref{eqn:back-3}. Then there holds for $0\le q \le 2$
\begin{equation*}
\|  (\E_h*\Delta_h(P_h-R_h)\tu )(t) \|_{ {\L2Om}} \le c h^2 t^{-\alpha(2-q)/2}\Big( \|  \tu(0) \|_{\dH q} +   t  \|  \partial_t\tu(0) \|_{\dH q} \Big)
\end{equation*}
\end{lemma}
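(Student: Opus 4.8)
The plan is to recognize that the convolution $\E_h*\Delta_h(P_h-R_h)\tu$ is nothing but the finite element error of a \emph{forward} problem, and then to invoke the already-established forward error estimate in Lemma \ref{lem:forward-error}. To this end, let $w_h(t)\in X_h$ denote the solution of the semidiscrete forward problem \eqref{eqn:semi-1} with $f\equiv 0$ and initial data $w_h(0)=P_h\tu(0)$, $\partial_t w_h(0)=P_h\partial_t\tu(0)$. Since $\tu$ solves the homogeneous diffusion-wave equation $\Dal\tu-\Delta\tu=0$ on $(0,T]$ with this data (cf. \eqref{eqn:back-3}), the function $w_h$ is precisely its finite element approximation. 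The key claim is the identity $(\E_h*\Delta_h(P_h-R_h)\tu)(t)=w_h(t)-P_h\tu(t)$ for all $t\in(0,T]$, which reduces the lemma to bounding a standard FEM error.

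To verify this identity I would set $\vartheta:=P_h\tu-w_h\in X_h$ and derive the equation it satisfies. Subtracting the weak form of \eqref{eqn:semi-1} from that of \eqref{eqn:fde}, testing against $\chi\in X_h$, and using that $P_h$ commutes with $\Dal$ and is the identity on $X_h$, one gets $(\Dal\vartheta,\chi)=-(\nabla(\tu-w_h),\nabla\chi)$. Splitting $\tu-w_h$ through the Ritz projection and exploiting the orthogonality $(\nabla(\tu-R_h\tu),\nabla\chi)=0$ together with the discrete-Laplacian identity $(\nabla w_h,\nabla\chi)=-(\Delta_h w_h,\chi)$ yields $\Dal\vartheta-\Delta_h\vartheta=-\Delta_h(P_h-R_h)\tu$. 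The initial data vanish, $\vartheta(0)=0$ and $\partial_t\vartheta(0)=0$, precisely because $w_h(0)=P_h\tu(0)$ and $\partial_t w_h(0)=P_h\partial_t\tu(0)$. Duhamel's principle for the inhomogeneous semidiscrete problem (the convolution term in \eqref{eqn:sol-rep-semi}, whose kernel is the operator $\E_h$) then gives $\vartheta(t)=-\int_0^t\E_h(t-s)\Delta_h(P_h-R_h)\tu(s)\,\d s$, i.e. $w_h(t)-P_h\tu(t)=(\E_h*\Delta_h(P_h-R_h)\tu)(t)$, as claimed.

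With the identity at hand, the estimate follows from the triangle inequality $\|(\E_h*\Delta_h(P_h-R_h)\tu)(t)\|_{\L2Om}\le\|(\tu-w_h)(t)\|_{\L2Om}+\|(\tu-P_h\tu)(t)\|_{\L2Om}$. The first term is bounded directly by Lemma \ref{lem:forward-error} applied with data $a=\tu(0)$ and $b=\partial_t\tu(0)$, which lie in $\dH q$ by Corollary \ref{cor:tu-reg}, producing $ch^2t^{-\alpha(2-q)/2}(\|\tu(0)\|_{\dH q}+t\|\partial_t\tu(0)\|_{\dH q})$. For the second term I would use the projection bound \eqref{estimate:PR}, $\|(\tu-P_h\tu)(t)\|_{\L2Om}\le ch^2\|\tu(t)\|_{\dH 2}$, combined with the smoothing estimate of Lemma \ref{lem:reg-u}(i) with $m=0$ and $p=2$, namely $\|\tu(t)\|_{\dH 2}\le c(t^{-\alpha(2-q)/2}\|\tu(0)\|_{\dH q}+t^{1-\alpha(2-q)/2}\|\partial_t\tu(0)\|_{\dH q})$, which reproduces exactly the same bound. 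Summing the two contributions gives the asserted estimate. The main obstacle is the careful derivation of the error identity for $\alpha\in(1,2)$: one must track both initial conditions and confirm that the Duhamel representation with the operator $\E_h$ genuinely solves the inhomogeneous diffusion-wave equation with vanishing $\vartheta(0)$ and $\partial_t\vartheta(0)$. Once this is settled, the remaining steps are routine applications of the already-proven forward estimate, the projection error, and the solution regularity.
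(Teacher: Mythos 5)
Your proposal is correct and follows essentially the same route as the paper: introduce the semidiscrete forward solution $w_h$ with data $P_h\tu(0)$, $P_h\partial_t\tu(0)$, identify $\E_h*\Delta_h(P_h-R_h)\tu$ with $w_h-P_h\tu$ via the error equation with vanishing initial data, and conclude by the triangle inequality using Lemma \ref{lem:forward-error} for $\tu-w_h$ and the $L^2$-projection bound \eqref{estimate:PR} together with Lemma \ref{lem:reg-u} for $\tu-P_h\tu$. The only difference is cosmetic (your $\vartheta=P_h\tu-w_h$ is the negative of the paper's $\zeta$).
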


\begin{proof}
Let $w_h$ be the solution to the semidiscrete problem
\begin{align}\label{eqn:back-semi-w}
\begin{aligned}
\Dal w_h-\Delta_h w_h& = 0,\ T\ge t>0,\\
w_h(0)   = P_h \tu(0) ,~
\partial_t w_h(0) &= P_h \partial_t \tu(0).\\
\end{aligned}
\end{align}
Then Lemma \ref{lem:forward-error} implies the estimate
\begin{equation}\label{eqn:est-tu-01}
\|  (w_h-\tu) (t) \| \le c h^2 t^{-\alpha(2-q)/2} \Big( \|  \tu(0) \|_{\dH q} +   t  \|  \partial_t\tu(0) \|_{\dH q} \Big).
\end{equation}
Meanwhile, we apply the following splitting
$$ (w_h - \tu )(t) = (w_h - P_h \tu) (t)+ (P_h \tu - \tu) (t)= : \zeta(t) + \rho(t).$$
From the approximation of $L^2$ projection \eqref{estimate:PR} and the regularity estimate in Lemma \ref{lem:reg-u}, we arrive at
\begin{equation}\label{eqn:est-tu-02}
\|\rho(t)\|_\L2Om  \le ch^2 \|\tu(t)\|_{\dH 2}  \le ch^2  t^{-\alpha(2-q)/2}\Big(\|  \tu(0) \|_{\dH q} +   t  \|  \partial_t\tu(0) \|_{\dH q} \Big).
\end{equation}
Moreover, we observe that the function $\zeta(t)$ satisfies
\begin{align*}
\begin{aligned}
\Dal \zeta(t)-\Delta_h  {\zeta}(t)& = \Delta_h (P_h - R_h) \tu(t),\ T\ge t>0,\\
\zeta(0)   = 0 ,~
\partial_t  {\zeta} (0) &= 0.\\
\end{aligned}
\end{align*}
Then \eqref{eqn:sol-rep-semi} indicates the representation
$\zeta(t) =  (\E_h*\Delta_h(P_h-R_h)\tu) (t)$.
Then the desired result follows immediately from \eqref{eqn:est-tu-01}, \eqref{eqn:est-tu-02} and the triangle inequality.
\end{proof}

Then we are ready to state a key lemma providing an estimate for the discretization error $\tu_h - \tu$.

\begin{lemma}\label{lem:err-tuh-tu}
Assume that $a,b\in L^2\II$. Let $\tu$ be the solution to the regularized  {problem
\eqref{eqn:back-3}} and $\tu_h$ be the solution to the
corresponding semidiscrete problem \eqref{eqn:back-semi-2}, then there holds for all $0<t \le T$
\begin{equation*}
\|(\tu_h-\tu)(t)\|_{\L2Om}
\le ch^2 \min(\gamma^{-1},t^{-\alpha}) \Big(\|a\|_{L^2} +\|b\|_{L^2\II}\Big)
\end{equation*}
and
\begin{equation*}
\|(\tu_h-\tu)(0)\|_{\L2Om}+
\|\partial_t(\tu_h-\tu)(0)\|_{\L2Om}
\le ch^2\gamma^{-1} \Big(\|a\|_{L^2} +\|b\|_{L^2\II}\Big)
\end{equation*}
where both $c$ are independent on $\gamma$, $h$ and $t$.
\end{lemma}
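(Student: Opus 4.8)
The plan is to compare $\tu_h$ and $\tu$ through an intermediate semidiscrete function. I introduce $w_h(t)$, the solution of the semidiscrete initial-value problem \eqref{eqn:semi-1} (with $f\equiv0$) whose initial data are the projected traces of $\tu$, namely $w_h(0)=P_h\tu(0)$ and $\partial_t w_h(0)=P_h\partial_t\tu(0)$; equivalently $w_h(t)=F_h(t)P_h\tu(0)+\bar F_h(t)P_h\partial_t\tu(0)$. I then split
\[
\tu_h-\tu = (\tu_h-w_h)+(w_h-\tu),
\]
and estimate the two pieces separately.

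The second piece $w_h-\tu$ is the forward discretization error for the initial data $\tu(0),\partial_t\tu(0)$. Decomposing it as in Lemma \ref{lem:est-tu-h} into the defect term $\zeta=\E_h*\Delta_h(P_h-R_h)\tu$ and the projection error $\rho=(P_h-I)\tu$, I bound $\zeta$ by Lemma \ref{lem:est-tu-h} and $\rho$ by \eqref{estimate:PR} together with the regularity of $\tu(t)$ from Lemma \ref{lem:reg-u}(i). The choice $q=0$ produces the decay $t^{-\alpha}$, whereas $q=2$ gives a bound uniform in $t$; in both cases Corollary \ref{cor:tu-reg} converts $\|\tu(0)\|_{\dH q}+\|\partial_t\tu(0)\|_{\dH q}$ into $c\gamma^{-q/2}(\|a\|_{\L2Om}+\|b\|_{\L2Om})$, so the two choices combine to
\[
\|(w_h-\tu)(t)\|_{\L2Om}\le c h^2\min(\gamma^{-1},t^{-\alpha})\big(\|a\|_{\L2Om}+\|b\|_{\L2Om}\big).
\]
At $t=0$ the defect $\zeta$ vanishes, so the projection error alone controls $(w_h-\tu)(0)$ and $\partial_t(w_h-\tu)(0)$ by $c h^2\gamma^{-1}(\|a\|_{\L2Om}+\|b\|_{\L2Om})$ via \eqref{estimate:PR} and Corollary \ref{cor:tu-reg} with $q=2$.

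The first piece is the crux. Since $\tu_h$ and $w_h$ are both governed by the operator $\gamma\I+\GG_h(T_1,T_2)$, I compare the quasi-boundary data each generates. The data of $\tu_h$ is the pair $(P_hg_1,P_hg_2)$, while applying $\gamma\I+\GG_h(T_1,T_2)$ to the initial data of $w_h$ and inserting $g_1=-\gamma\tu(0)+\tu(T_1)$ and $g_2=\gamma\partial_t\tu(0)+\tu(T_2)$, the $\mp\gamma P_h$ contributions cancel and $P_hw_h(T_i)=w_h(T_i)$, so the difference of the two data reduces exactly to the pair $(P_h(\tu-w_h)(T_1),\,P_h(\tu-w_h)(T_2))$. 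Consequently
\[
(\tu_h-w_h)(t)=\FF_h(t)(\gamma\I+\GG_h(T_1,T_2))^{-1}
\begin{bmatrix} P_h(\tu-w_h)(T_1)\\ P_h(\tu-w_h)(T_2)\end{bmatrix}.
\]
Now the $L^2$-stability of $P_h$, the fact that $T_1,T_2$ are fixed (so the second-piece bound evaluated at $T_i$ is $\le ch^2(\|a\|_{\L2Om}+\|b\|_{\L2Om})$), and the two stability estimates of Lemma \ref{lem:stab-semi} yield $\|(\tu_h-w_h)(t)\|_{\L2Om}\le ch^2\min(\gamma^{-1},t^{-\alpha})(\|a\|_{\L2Om}+\|b\|_{\L2Om})$ for $t>0$, together with the $\gamma^{-1}$ bound for $(\tu_h-w_h)(0)$ and $\partial_t(\tu_h-w_h)(0)$. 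Adding the two pieces proves both assertions.

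The main obstacle is the bookkeeping of the previous paragraph: verifying that the quasi-boundary data of $w_h$ differs from that of $\tu_h$ by exactly the projected forward error $P_h(\tu-w_h)(T_i)$, so that the delicate inverse $(\gamma\I+\GG_h(T_1,T_2))^{-1}$ acts only on a quantity that is already of size $O(h^2)$. Once this reduction is in hand, Lemma \ref{lem:stab-semi} supplies the $\min(\gamma^{-1},t^{-\alpha})$ factor without further loss. A secondary technical point is the simultaneous use of $q=0$ and $q=2$ together with Corollary \ref{cor:tu-reg}, which is precisely what manufactures the minimum and introduces the $\gamma$-dependence.
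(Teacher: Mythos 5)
Your proposal is correct and follows essentially the same route as the paper: your pieces $(\tu_h-w_h)$ and $(w_h-\tu)$ are exactly the paper's $I_1$ and $I_2+\rho$ in its splitting $\tu_h-\tu=(\tu_h-P_h\tu)+(P_h\tu-\tu)$, and your key identity expressing $(\gamma\I+\GG_h(T_1,T_2))$ applied to the initial-data difference as the projected forward error at $T_1,T_2$ is the paper's relation \eqref{eqn:xi-0}. The supporting ingredients (Lemma \ref{lem:forward-error} / Lemma \ref{lem:est-tu-h}, Lemma \ref{lem:stab-semi}, Corollary \ref{cor:tu-reg} with $q=0$ and $q=2$, and \eqref{estimate:PR}) are used in the same way, so the argument matches the paper's proof up to regrouping.
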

\begin{proof}
First of all, for $t\in(0, T] $, we use the splitting 
\begin{equation*}
 (\tu_h-\tu)(t) =  (\tu_h-P_h\tu)(t)+ (P_h\tu-\tu)(t)=:\zeta(t)+ \rho(t).
\end{equation*}
From the approximation property of the $L^2$-projection in \eqref{estimate:PR}, we arrive at
\begin{equation*}
\begin{aligned}
\|\rho(t)\|_\L2Om & \le ch^2 \|\tu(t)\|_{\dH 2}  \le ch^2  \min\big(\gamma^{-1},t^{-\alpha}\big)
 \big( \|g_1\|_{\dH 2}+ \|g_2\|_{\dH 2}\big) \\
 &\le  ch^2  \min\big(\gamma^{-1},t^{-\alpha}\big)
 \big( \|a\|_{L^ 2\II}+ \|b\|_{L^2\II}\big) \\
 \end{aligned}
\end{equation*}
where the second inequality  follows from \eqref{eqn:sol-back-4} and Lemma \ref{lem:regular:estimate:ope} (with $p=q=2$),
and the last inequality follows from the regularity estimate in Lemma \ref{lem:reg-u}.

Now we turn to the term $\zeta = \tu_h-P_h\tu$ which satisfies the error equation
\begin{align*}
\left\{\begin{aligned}
\Dal \zeta - \Delta_h \zeta & =\Delta_h(P_h-R_h)\tu(t),~~ T\ge t>0,\\
 {-\gamma} \zeta(0) + \zeta(T_1) &= 0 ,\\
\gamma \partial_t \zeta(0) +  \zeta(T_2) &= 0.\\
\end{aligned}\right.
\end{align*}
From solution representation we have
\begin{equation*}
\begin{bmatrix}
\zeta(T_1)\\ \zeta(T_2)
\end{bmatrix} = \GG_h(T_1,T_2)\vecfunc{\zeta}{0} +
\begin{bmatrix}
(\E_h*\Delta_h(P_h-R_h)\tu)(T_1)\\
(\E_h*\Delta_h(P_h-R_h)\tu)(T_2)
\end{bmatrix}.
\end{equation*}
Then we add $( {-\gamma\zeta(0)},\gamma\partial_t\zeta(0))^T$ at both sides and derive
\begin{equation}\label{eqn:xi-0}
\begin{aligned}
\begin{bmatrix}
0\\0
\end{bmatrix}
=  (\gamma \I+\GG_h(T_1,T_2))
\vecfunc{\zeta}{0}
+ \begin{bmatrix}
(\E_h* \Delta_h(P_h-R_h)\tu)(T_1)\\
(\E_h* \Delta_h(P_h-R_h)\tu)(T_2)
\end{bmatrix}.
\end{aligned}
\end{equation}
This immediately implies a representation to $ {\zeta(t)}$:
\begin{align*}
 \zeta(t) &= \FF_h(t)\vecfunc{\zeta}{0} + (\E_h*\Delta_h(P_h-R_h)\tu)(t)\\
 &=
-\FF_h(t)(\gamma \I+\GG_h(T_1,T_2))^{-1}
\begin{bmatrix}
(\E_h*\Delta_h(P_h-R_h)\tu)(T_1)\\
(\E_h*\Delta_h(P_h-R_h)\tu)(T_2)
\end{bmatrix}
+  (\E_h*\Delta_h(P_h-R_h)\tu)(t)  \\
&=:I_1(t) +I_2(t).
\end{align*}
Then Lemmas \ref{lem:stab-semi} and \ref{lem:est-tu-h} lead to the estimate for all $t\in(0,T]$
\begin{equation*}
\begin{aligned}
\|I_1(t)\|_\L2Om
 &\le c\min(\gamma^{-1},t^{-\alpha})\sum_{i=1}^2\|(\E_h*\Delta_h(P_h-R_h)\tu)(T_i)\|_\L2Om \\
 &\le ch^2 \min(\gamma^{-1},t^{-\alpha})(\|\tu(0)\|_{L^2\II}+\|\partial_t\tu(0)\|_{L^2\II}).\\
\end{aligned}
\end{equation*}
Recalling Corollary \ref{cor:tu-reg} with $q=0$, we derive for all $t\in(0,T]$
\begin{equation*}
\begin{aligned}
\|I_1(t)\|_\L2Om
 &\le ch^2  \min(\gamma^{-1},t^{-\alpha})(\|a\|_{L^2\II}+\|b\|_{L^2\II}).\\
\end{aligned}
\end{equation*}
Similarly, using Lemma \ref{lem:est-tu-h} with $q=2$ and Corollary \ref{cor:tu-reg} with $q=2$, we bound the  term $I_2$ by
\begin{equation*}
\begin{aligned}
\|I_2(t)\|_\L2Om
&\le ch^2 (\|\tu(0)\|_{\dH2}+\|\partial_t\tu(0)\|_{\dH2}) \le ch^2 \gamma^{-1} (\|a\|_{L^2\II}+\|b\|_{L^2\II})
\end{aligned}
\end{equation*}
for all $t\in(0,T]$. Meanwhile, using Lemma \ref{lem:est-tu-h} with $q=0$ and Corollary \ref{cor:tu-reg} with $q=0$, we have
\begin{equation*}
\begin{aligned}
\|I_2(t)\|_\L2Om
&\le ch^2 t^{-\alpha} (\|\tu(0)\|_{L^2}+\|\partial_t\tu(0)\|_{L^2}) \le ch^2 t^{-\alpha} (\|a\|_{L^2\II}+\|b\|_{L^2\II}).
\end{aligned}
\end{equation*}
Therefore we conclude that
$$  \|   (\tu - \tu_h)(t)  \|_{L^2\II} \le c h^2 \min(\gamma^{-1}, t^{-\alpha}) \Big(\|a\|_{L^2} +\|b\|_{L^2\II}\Big).  $$

Similarly, for $t=0$, the relation \eqref{eqn:xi-0} implies
\begin{align*}
\vecfunc{\zeta}{0}=
&- (\gamma \I+\GG_h(T_1,T_2))^{-1}
\begin{bmatrix}
(\E_h*\Delta_h(P_h-R_h)\tu)(T_1)\\
(\E_h*\Delta_h(P_h-R_h)\tu)(T_2)
\end{bmatrix}
\end{align*}
Then  Lemmas \ref{lem:regular:estimate:ope} (with $p=0$ and $q=0$), \ref{lem:stab-semi}
(with $q=0$) and Corollary \ref{lem:est-tu-h} (with $q=0$) yield
\begin{align*}
 \| \zeta(0) \|_{L^2\II}+ \| \partial_t\zeta(0) \|_{L^2\II}
 &\le c\gamma^{-1}
\sum_{i=1}^2\|\E_h*\Delta_h(P_h-R_h)\tu(T_i) \|_{L^2\II} \\
&\le c h^2 \gamma^{-1} \Big(  \|\tu(0)\|_{L^2\II}+\|\partial_t\tu(0)\|_{L^2\II} \Big) \\
&\le c h^2 \gamma^{-1} \Big(  \|a\|_{L^2\II}+\|b\|_{L^2\II} \Big).
\end{align*}
This completes the proof of the lemma.
\end{proof}

Then Lemma \ref{lem:est-tu-u}, Corollary \ref{cor:err-tuh-tuhd} and Lemma \ref{lem:err-tuh-tu} would lead to the following error estimate.
\begin{theorem}
\label{thm:err-semi}
Assume that $a,b\in \dH q$, $q\in[0,2]$. Let $u$ be the solution to the problem \eqref{eqn:fde}
and $\tu_h^\delta$ be the solution to the regularized semidiscrete problem \eqref{eqn:back-semi-1}, then there holds
\begin{equation*}
\begin{aligned}
\|(\tu_h^\delta-u)(t)\|_\L2Om
\le c\Big[\gamma \min(\gamma^{-(1-\frac q2)}, t^{-(1-\frac q2)\alpha})+(h^2+\delta)\min(\gamma^{-1},t^{-\alpha})\Big]\quad \forall t\in (0,T],\\
\end{aligned}
\end{equation*}
and
\begin{equation*}
\begin{aligned}
\|(\tu_h^\delta-u)(0)\|_\L2Om + \|\partial_t(\tu_h^\delta-u)(0)\|_\L2Om \le c\Big[\gamma^\frac{q}{2}+ \gamma^{-1} (h^2+\delta)\Big].
\end{aligned}
\end{equation*}
where  $c$ dependes on $T_1$, $T_2$, $a$ and $b$, but is always independent of $h$, $\gamma$, $\delta$ and $t$.
\end{theorem}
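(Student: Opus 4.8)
The plan is to insert two auxiliary functions and reduce the claim to a triangle inequality whose pieces have all been estimated beforehand. Concretely, I would decompose the total error as
\[
\tu_h^\delta - u = (\tu_h^\delta - \tu_h) + (\tu_h - \tu) + (\tu - u),
\]
where $\tu_h$ is the semidiscrete regularized solution driven by the \emph{exact} data, defined in \eqref{eqn:back-semi-2}, and $\tu$ is the continuous regularized solution driven by the exact data, defined in \eqref{eqn:back-3}. The three summands isolate, respectively, the propagation of the observation noise, the spatial finite-element discretization error, and the regularization (bias) error. This is exactly the splitting for which the preceding lemmas were prepared.

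For $t\in(0,T]$ I would bound the first term by Corollary \ref{cor:err-tuh-tuhd}, giving $c\,\delta\min(\gamma^{-1},t^{-\alpha})$; the second by Lemma \ref{lem:err-tuh-tu}, giving $c\,h^2\min(\gamma^{-1},t^{-\alpha})(\|a\|_{L^2\II}+\|b\|_{L^2\II})$; and the third by Lemma \ref{lem:est-tu-u}(i), giving $c\,\gamma\min(\gamma^{-(1-q/2)},t^{-(1-q/2)\alpha})$. Since the noise and discretization bounds carry the \emph{common} factor $\min(\gamma^{-1},t^{-\alpha})$, they combine into $(h^2+\delta)\min(\gamma^{-1},t^{-\alpha})$ once the fixed data norms $\|a\|_{L^2\II}+\|b\|_{L^2\II}$ are absorbed into the generic constant, which is precisely the stated estimate on $(0,T]$.

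For $t=0$ the same decomposition applies verbatim to both $(\tu_h^\delta-u)(0)$ and $\partial_t(\tu_h^\delta-u)(0)$, because each cited estimate controls the function value and its first time derivative simultaneously at $t=0$. Adding the $t=0$ bounds of Corollary \ref{cor:err-tuh-tuhd} ($c\,\delta\gamma^{-1}$), Lemma \ref{lem:err-tuh-tu} ($c\,h^2\gamma^{-1}$) and Lemma \ref{lem:est-tu-u}(i) ($c\,\gamma^{q/2}$) yields $c\big[\gamma^{q/2}+\gamma^{-1}(h^2+\delta)\big]$, as claimed.

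The argument is essentially bookkeeping, and I expect no residual analytic obstacle, since the three constituent estimates were established separately in advance. The only points that require care are: (i) checking that the regularity hypotheses are mutually compatible — the regularization-error bound genuinely exploits $a,b\in\dH q$, whereas the noise and discretization bounds need only $a,b\in L^2\II$, which follows from $\dH q\hookrightarrow L^2\II$ for $q\ge0$; and (ii) verifying that the two $\min(\gamma^{-1},t^{-\alpha})$ factors align so that the $h^2$ and $\delta$ contributions may legitimately be grouped rather than estimated separately. Tracking how the generic constant $c$ absorbs the dependence on $T_1$, $T_2$, $\alpha$ and the data norms $\|a\|_{\dH q}$, $\|b\|_{\dH q}$ then completes the proof.
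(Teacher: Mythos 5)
Your proposal is correct and coincides with the paper's own argument: the paper states explicitly that Theorem \ref{thm:err-semi} follows by combining Lemma \ref{lem:est-tu-u}, Corollary \ref{cor:err-tuh-tuhd} and Lemma \ref{lem:err-tuh-tu}, i.e.\ precisely the three-term splitting $\tu_h^\delta-u=(\tu_h^\delta-\tu_h)+(\tu_h-\tu)+(\tu-u)$ with the triangle inequality. Your bookkeeping of the three bounds at $t>0$ and at $t=0$, and the absorption of the data norms into the generic constant, matches what the paper leaves implicit.
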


\begin{remark}
For $a,b\in \dH q$ and $t \ge t_0$, then Theorem \ref{thm:err-semi} provides an estimate
\begin{equation*}
\begin{aligned}
\|(\tu_h^\delta-u)(t)\|_\L2Om
\le c(\gamma + (h^2+\delta) ).
\end{aligned}
\end{equation*}
With the \textsl{a priori} choice of parameter $\gamma\sim \delta$ and $h\sim \sqrt{\delta}$,
we obtain the optimal convergence rate $ \|(\tu_h^\delta-u)(t)\|_\L2Om \le c \delta$.
For $t=0$, according to Theorem \ref{thm:err-semi}, we choose
$\gamma\sim \delta^{\frac{2}{2+q}}$ and $h\sim \sqrt{\delta}$ to obtain the best convergence rate
\begin{equation*}
\begin{aligned}
\|(\tu_h^\delta-u)(0)\|_\L2Om + \|\partial_t(\tu_h^\delta-u)(0)\|_\L2Om \le c \delta^{\frac{q}{2+q}}.
\end{aligned}
\end{equation*}
In case that $q=0$,  we can also show the convergence, provided a suitable choice of parameters.
According to Lemma \ref{lem:est-tu-u}, Corollary \ref{cor:err-tuh-tuhd} and Theorem \ref{thm:err-semi}, there holds for any $s\in(0,1]$
\begin{equation*}
\|(\tu_h^\delta-u)(0)\|_\L2Om + \|\partial_t(\tu_h^\delta-u)(0)\|_{\dH {-s}} \rightarrow0,\ \quad \text{as } \delta,\gamma,h \to 0,~ \frac{\delta}{\gamma} \to 0~\text{and} ~\frac{h^2}{\gamma}\to 0.
\end{equation*}
\end{remark}\vskip15pt

\section{Fully discrete scheme and error analysis}\label{sec:fully}
Now we intend to propose and analyze a fully discrete scheme for approximately solving the backward diffusion-wave problem.

\subsection{Fully discrete scheme for the direct problem}
To begin with, we introduce the fully discrete scheme for the direct problem.
We divide the time interval $[0,T]$ into a uniform grid, with $ t_n=n\tau$, $n=0,\ldots,N$, and $\tau=T/N$ being
the time step size. In case that $\fy(0)=0$ and $\fy'(0)=0$, we approximate the Riemann-Liouville fractional derivative
\begin{equation*}
^{RL}\Dal \varphi(t)=\frac{1}{\Gamma(2-\alpha)}\frac{\d^2}{\d t^2}\int_0^t(t-s)^{1-\alpha}\varphi(s)\d s
\end{equation*}
by the backward Euler convolution quadrature (with $\varphi_j=\varphi(t_j)$) \cite{Lubich:1986, JinLiZhou:2017sisc}:
\begin{equation*}
  ^{RL}\Dal \varphi(t_n) \approx \tau^{-\alpha} \sum_{j=0}^nb_j\varphi_{n-j}:=\bar\partial_\tau^\alpha \varphi_n,\quad\mbox{ with } \sum_{j=0}^\infty b_j\xi^j = (1-\xi)^\alpha.
\end{equation*}

The fully discrete scheme for problem \eqref{eqn:fde-0} reads: find ${U_n}\in X_h$ such that
\begin{equation}\label{eqn:fully}
\bDal (U_n-  {P_ha} - t_n  {P_hb})-\Delta_h U_n= P_h f(t_n),\quad n=1,2,\ldots,N,
\end{equation}
with the initial condition $U_0=P_h a \in X_h$.
Here we use the relation between Riemann-Liouville and Caputo fractional derivatives with $\alpha\in(1,2)$ \cite[p. 91]{KilbasSrivastavaTrujillo:2006}:
$$ \Dal u (t_n) = \Dal (u(t_n) - a - t b) = {^{RL}\Dal} (u(t_n) - a - t b) \approx \bar \partial_\tau^\alpha (u(t_n) - a - t b). $$
 By means of discrete Laplace transform, the fully discrete solution $U_n$ is given by
\begin{equation}\label{eqn:Sol-expr-uhtau}
\begin{aligned}
  U_n &= \FF_{h,\tau}^n  \begin{bmatrix}
P_h a\\ P_h b
\end{bmatrix}  + \tau \sum_{k=1}^{n} E_{h,\tau}^{n-k} P_hf(t_k) :=\begin{bmatrix}
F_{h,\tau}^n & \bar F_{h,\tau}^n
\end{bmatrix}
\begin{bmatrix}
P_h a \\ P_h b
\end{bmatrix}
 + \tau \sum_{k=1}^{n} E_{h,\tau}^{n-k} P_hf(t_k) ,
 \end{aligned}
\end{equation}
 with $n=1,2,\ldots,N$, where the  discrete operators $F_{h,\tau}^n$, $\bar F_{h,\tau}^n$ and $E_{h,\tau}^n$ are respectively defined by  \cite{JinLiZhou:2017sisc}
\begin{equation}\label{eqn:FE_ht-0}
\begin{aligned}
F_{h,\tau}^n &= \frac{1}{2\pi\mathrm{i}}\int_{\Gamma_{\theta,\sigma}^\tau } e^{zt_n} {e^{-z\tau}} \delta_\tau(e^{-z\tau})^{\alpha-1}({ \delta_\tau(e^{-z\tau})^\alpha}-\Delta_h)^{-1}\,\d z,\\
\bar F_{h,\tau}^n &= \frac{1}{2\pi\mathrm{i}}\int_{\Gamma_{\theta,\sigma}^\tau } e^{zt_n}  {e^{-z\tau}}\delta_\tau(e^{-z\tau})^{\alpha-2}({ \delta_\tau(e^{-z\tau})^\alpha}-\Delta_h)^{-1}\,\d z ,\\
E_{h,\tau}^n &= \frac{1}{2\pi\mathrm{i}}\int_{\Gamma_{\theta,\sigma}^\tau } e^{zt_n} ({ \delta_\tau(e^{-z\tau})^\alpha}-\Delta_h)^{-1}\,\d z ,
\end{aligned}
\end{equation}
with $\delta_\tau(\xi)=(1-\xi)/\tau$ and the contour
$\Gamma_{\theta,\sigma}^\tau :=\{ z\in \Gamma_{\theta,\sigma}:|\Im(z)|\le {\pi}/{\tau} \}$ where $\theta\in(\pi/2,\pi)$ is close to $\pi/2$.
(oriented with an increasing imaginary part).
The next lemma gives elementary properties of the kernel $\delta_\tau(e^{-z\tau})$. The detailed proof has been given in
\cite[Lemma B.1]{JinLiZhou:2017sisc}.
\begin{lemma}\label{lem:delta}
For a fixed $\theta'\in(\pi/2,\pi/\alpha)$, there exists $\theta\in(\pi/2,\pi)$
and positive constants $c,c_1,c_2$ $($independent of $\tau$$)$ such that for all $z\in \Gamma_{\theta,\sigma}^\tau$
\begin{equation*}
\begin{aligned}
& c_1|z|\leq
|\delta_\tau(e^{-z\tau})|\leq c_2|z|,\qquad
\delta_\tau(e^{-z\tau})\in \Sigma_{\theta'}. \\
& |\delta_\tau(e^{-z\tau})-z|\le c\tau |z|^{2},\qquad
 |\delta_\tau(e^{-z\tau})^\alpha-z^\alpha|\leq c\tau |z|^{1+\alpha}.
 \end{aligned}
\end{equation*}
\end{lemma}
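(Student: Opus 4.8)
The plan is to reduce all four assertions to the study of the single entire function $g(\zeta):=(1-e^{-\zeta})/\zeta$, normalized by $g(0)=1$, via the scaling $\zeta=z\tau$, since
\begin{equation*}
\delta_\tau(e^{-z\tau})=\frac{1-e^{-z\tau}}{\tau}=z\,g(z\tau).
\end{equation*}
The crucial structural observation is that the truncation $|\Im(z)|\le\pi/\tau$, together with the choice of $\theta$ close to $\pi/2$, confines the rescaled variable to a fixed bounded set independent of $\tau$: on the ray $z=\rho e^{\pm\mathrm{i}\theta}$ one has $\rho\sin\theta\le\pi/\tau$, hence $|\zeta|=\rho\tau\le\pi/\sin\theta$, while the circular part of the contour contributes $|\zeta|=\sigma\tau\to 0$. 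Thus, as $z$ ranges over $\Gamma_{\theta,\sigma}^\tau$, the point $\zeta=z\tau$ stays in a compact truncated sector $K_\theta\subset\{|\arg\zeta|\le\theta,\ |\zeta|\le\pi/\sin\theta\}$. On $K_\theta$ one has $|\Im(\zeta)|\le\pi$, so the zeros $2\pi\mathrm{i}k$ ($k\neq 0$) of $1-e^{-\zeta}$ are excluded, and $g$ is analytic and nonvanishing there (recall $g(0)=1$).

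First I would treat the magnitude bound. Since $g$ is continuous and nonvanishing on the compact set $K_\theta$, there are constants $0<c_1\le c_2$ with $c_1\le|g(\zeta)|\le c_2$ on $K_\theta$; multiplying by $|z|$ yields $c_1|z|\le|\delta_\tau(e^{-z\tau})|=|z|\,|g(z\tau)|\le c_2|z|$. Note that the lower bound genuinely requires the nonvanishing of $g$ on all of $K_\theta$ and cannot be obtained from the consistency estimate alone.

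Next comes the sector membership $\delta_\tau(e^{-z\tau})\in\Sigma_{\theta'}$. Here the naive splitting $\arg\delta_\tau=\arg z+\arg g(z\tau)$ combined with the triangle inequality is too lossy, so I would analyze $\arg(1-e^{-z\tau})$ directly (the positive factor $1/\tau$ does not affect the argument). In the limiting case $\theta=\pi/2$ one has $z\tau=\mathrm{i}y$ with $y\in[-\pi,\pi]$, and the closed form $1-e^{-\mathrm{i}y}=2\mathrm{i}\sin(y/2)\,e^{-\mathrm{i}y/2}$ shows $|\arg(1-e^{-z\tau})|\le\pi/2$. Since $\theta'>\pi/2$ strictly (as $\alpha<2$ makes $(\pi/2,\pi/\alpha)$ nonempty), there is a genuine buffer, and a continuity/perturbation argument in $\theta$, uniform over the bounded set $K_\theta$, yields $|\arg\delta_\tau(e^{-z\tau})|\le\theta'$ for every $\theta$ sufficiently close to $\pi/2$. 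I expect this step to be the main obstacle: the argument must exploit the cancellation between $\arg z$ and $\arg g(z\tau)$ near the truncation corner $z\tau\approx\pm\mathrm{i}\pi$, rather than bounding the two contributions separately.

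Finally I would derive the two consistency estimates by Taylor expansion on the bounded region. Writing $\delta_\tau(e^{-z\tau})-z=\tau^{-1}(1-e^{-z\tau}-z\tau)=\tau^{-1}h(z\tau)$ with $h(\zeta)=1-e^{-\zeta}-\zeta=-\zeta^2/2+O(\zeta^3)$, the bound $|h(\zeta)|\le C|\zeta|^2$ valid on $K_\theta$ gives $|\delta_\tau(e^{-z\tau})-z|\le C\tau^{-1}|z\tau|^2=C\tau|z|^2$. For the last inequality I would use that both $a:=\delta_\tau(e^{-z\tau})$ and $b:=z$ lie in $\Sigma_{\theta'}$ with $|a|,|b|$ comparable to $|z|$, so the elementary estimate $|a^\alpha-b^\alpha|\le c\max(|a|,|b|)^{\alpha-1}|a-b|$ (valid for points in a common sector avoiding the branch cut) combines with the third estimate to yield $|\delta_\tau(e^{-z\tau})^\alpha-z^\alpha|\le c|z|^{\alpha-1}\cdot\tau|z|^2=c\tau|z|^{1+\alpha}$, which completes the proof.
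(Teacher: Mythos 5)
The paper does not prove this lemma itself but defers to \cite[Lemma B.1]{JinLiZhou:2017sisc}, and your argument — rescaling to $\zeta=z\tau$, observing that the truncation $|\Im z|\le\pi/\tau$ confines $\zeta$ to a $\tau$-independent compact truncated sector on which $g(\zeta)=(1-e^{-\zeta})/\zeta$ is analytic and nonvanishing, and then using compactness for the two-sided modulus bound and Taylor expansion of $1-e^{-\zeta}-\zeta$ for the consistency estimates — is exactly the standard proof given there. The only points left genuinely sketchy are the two you flag yourself: the uniform-in-$\tau$ perturbation argument for the sector membership, and the inequality $|a^\alpha-b^\alpha|\le c\max(|a|,|b|)^{\alpha-1}|a-b|$, whose implicit hypothesis that the segment $[z,\,zg(z\tau)]$ avoids the branch cut requires the sign cancellation $\arg g(\zeta)\cdot\Im\zeta\le 0$ (or can be bypassed entirely by writing $\delta_\tau(e^{-z\tau})^\alpha-z^\alpha=z^\alpha\bigl(g(z\tau)^\alpha-1\bigr)$ and using that $w\mapsto w^\alpha$ is Lipschitz on the compact set $g(K_\theta)\subset\mathbb{C}\setminus(-\infty,0]$); both are routine to complete along the lines you indicate.
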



In case that $f\equiv0$, with the spectral decomposition,  we can write
\begin{equation}\label{eqn:sol-uhtau-1}
  U_n = F_{h,\tau}^n P_h a + \bar F_{h,\tau}^n P_h b
  = \sum_{j=1}^J \Big[F_{\tau}^n(\lambda_j^h) (a,\fy_j^h){\varphi_j^h} + \bar F_{\tau}^n(\lambda_j^h) (b,\fy_j^h){\varphi_j^h} \Big]
\end{equation}
where $F_{\tau}^n(\lambda_j^h)$ and $\bar F_{\tau}^n(\lambda_j^h)$ are the solutions to the discrete initial value problems
$$\bDal [F_{\tau}^n(\lambda_j^h)-1] + \lambda_j^h F_{\tau}^n(\lambda_j^h) = 0,\quad \mbox{with}\quad F_{\tau}^0(\lambda_j^h) = 1$$
and
$$\bDal [\bar F_{\tau}^n(\lambda_j^h)-t_n] + \lambda_j^h \bar F_{\tau}^n(\lambda_j^h) = 0,\quad \mbox{with}\quad \bar F_{\tau}^0(\lambda_j^h) = 0$$
respectively.
{From \eqref{eqn:FE_ht-0}}, we write ${F_{\tau}^n(\lambda_j^h)}$ and $\bar F_{\tau}^n(\lambda_j^h)$ as
\begin{equation}\label{eqn:FE_ht}
\begin{aligned}
F_{\tau}^n(\lambda_j^h) &= \frac{1}{2\pi\mathrm{i}}\int_{\Gamma_{\theta,\sigma}^\tau } e^{zt_n}  {e^{-z\tau}}\delta_\tau(e^{-z\tau})^{\alpha-1}({ \delta_\tau(e^{-z\tau})^\alpha}+\lambda_j^h)^{-1}\,\d z\\
\bar F_{\tau}^n(\lambda_j^h) &= \frac{1}{2\pi\mathrm{i}}\int_{\Gamma_{\theta,\sigma}^\tau } e^{zt_n}  {e^{-z\tau}}\delta_\tau(e^{-z\tau})^{\alpha-2}({ \delta_\tau(e^{-z\tau})^\alpha}+\lambda_j^h)^{-1}\,\d z.
\end{aligned}
\end{equation}

Next we derive several useful properties of $ F_\tau^n(\lambda_j^h)$ and $\bar F_\tau^n(\lambda_j^h)$. The proof is standard but lengthy, and
hence deferred to the appendix.
\begin{lemma}\label{lem:fully-approx}
Let $F_\tau^n(\lambda)$ and $\bar F_\tau^n(\lambda)$ be defined as in \eqref{eqn:FE_ht}.
Then for $\lambda>0$, there holds for $1\le n\le N$,
\begin{equation}\label{eqn:es-01}
\big|E_{\alpha,1}(-\lambda t_n^\alpha)-F_\tau^n(\lambda)\big|
+ t_n^{-1} \big| t_n E_{\alpha,2}(-\lambda t^\alpha)-\bar F_\tau^n(\lambda)\big|
\le  \frac{cn^{-1}}{1+\lambda t_n^\alpha}.
\end{equation}
Meanwhile, there holds
\begin{equation}\label{eqn:es-02}
\lambda^{-1} \Big(\big|E_{\alpha,1}(-\lambda t_n^\alpha) - F_{\tau}^n(\lambda)\big|
+ t_n^{-1}  \big| t_n E_{\alpha,2}(-\lambda t_n^\alpha) - \bar F_{\tau}^n(\lambda)\big| \Big) \le c\tau t_{n}^{\alpha-1}.
\end{equation}
Here  $c$ is the  generic positive constant independent of $\lambda$, $t$ and $\tau$.
\end{lemma}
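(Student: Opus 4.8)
The plan is to represent each continuous operator by the \emph{scalar} contour integral obtained from \eqref{eqn:FE-LAP} upon replacing $-\Delta$ by the eigenvalue $\lambda$, namely
\[
E_{\alpha,1}(-\lambda t_n^\alpha)=\frac{1}{2\pi i}\int_\contour e^{zt_n}z^{\alpha-1}(z^\alpha+\lambda)^{-1}\,\d z,\qquad t_nE_{\alpha,2}(-\lambda t_n^\alpha)=\frac{1}{2\pi i}\int_\contour e^{zt_n}z^{\alpha-2}(z^\alpha+\lambda)^{-1}\,\d z,
\]
and to compare them with the discrete symbols in \eqref{eqn:FE_ht}. Throughout I take the radius $\sigma=1/t_n$, so that on the rays $z=\rho e^{\pm i\theta}$ one has $|e^{zt_n}|=e^{-\beta\rho t_n}$ with $\beta=-\cos\theta>0$, and I abbreviate $\eta:=\delta_\tau(e^{-z\tau})$. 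On the truncated contour $\Gamma_{\theta,\sigma}^\tau$ the constraint $|\Im z|\le\pi/\tau$ forces $|z\tau|\le c$, whence $|e^{-z\tau}|\le c$ and $|1-e^{-z\tau}|\le c|z|\tau$. Since $z\in\Sigma_\theta$ with $\alpha\theta<\pi$, and $\eta\in\Sigma_{\theta'}$ by Lemma \ref{lem:delta}, the resolvent bound $|z^\alpha+\lambda|\ge c(|z|^\alpha+\lambda)$ holds, and likewise for $\eta$; these are used repeatedly. Each error is split into a \emph{truncation} part (the tail of $\contour$ with $|\Im z|\ge\pi/\tau$, where $\rho t_n\gtrsim n$ makes $e^{-\beta\rho t_n}$ exponentially small in $n$, hence negligible against both right-hand sides) plus a \emph{quadrature} part over $\Gamma_{\theta,\sigma}^\tau$.

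For \eqref{eqn:es-01} I bound the quadrature part directly. Writing the kernel difference for $F_\tau^n$ as $[z^{\alpha-1}-e^{-z\tau}\eta^{\alpha-1}](z^\alpha+\lambda)^{-1}+e^{-z\tau}\eta^{\alpha-1}[(z^\alpha+\lambda)^{-1}-(\eta^\alpha+\lambda)^{-1}]$ and invoking $|z^{\alpha-1}-\eta^{\alpha-1}|\le c\tau|z|^\alpha$ together with $|\eta^\alpha-z^\alpha|\le c\tau|z|^{1+\alpha}$ from Lemma \ref{lem:delta}, I obtain the integrand bound $c\tau|z|^{\alpha-1}/(|z|^\alpha+\lambda)$ (for $\bar F_\tau^n$ the same computation lowers one power of $|z|$, matching the extra $t_n^{-1}$). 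Substituting $\rho=r/t_n$ turns the ray integral into $c\,\frac{\tau}{t_n}\int_1^\infty e^{-\beta r}\frac{r^{\alpha-1}}{r^\alpha+\lambda t_n^\alpha}\,\d r$; distinguishing the regimes $\lambda t_n^\alpha\lessgtr 1$ (cutting at $r=(\lambda t_n^\alpha)^{1/\alpha}$ in the large case) yields exactly $cn^{-1}/(1+\lambda t_n^\alpha)$, and the arc $|z|=1/t_n$ contributes the same order.

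Estimate \eqref{eqn:es-02} is the delicate one: the same kernel-difference bound would only give the $\lambda$-independent rate $c\tau/t_n$ and fail. Instead I extract the factor $\lambda$ algebraically through $z^{\alpha-1}(z^\alpha+\lambda)^{-1}=z^{-1}-\lambda z^{-1}(z^\alpha+\lambda)^{-1}$ (and $z^{\alpha-2}(z^\alpha+\lambda)^{-1}=z^{-2}-\lambda z^{-2}(z^\alpha+\lambda)^{-1}$ for $\bar F$), with the corresponding discrete identities in $\eta$. The crucial point is that the $\lambda$-independent pieces integrate \emph{exactly} to the consistent values: the continuous ones give $E_{\alpha,1}(0)=1$ and $t_nE_{\alpha,2}(0)=t_n$, while the discrete ones give $F_\tau^n(0)=1$ and $\bar F_\tau^n(0)=t_n$ (the exact solutions of the discrete schemes at $\lambda=0$), so they cancel. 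This leaves $E_{\alpha,1}(-\lambda t_n^\alpha)-F_\tau^n(\lambda)=-\lambda(Q-Q_\tau)$, with $Q,Q_\tau$ the continuous and discrete quadratures of the smoother symbol $z^{-1}(z^\alpha+\lambda)^{-1}$. Repeating the kernel-difference estimate for this symbol (now using $|z^{-1}-\eta^{-1}|\le c\tau$) gives the integrand bound $c\tau|z|^{-\alpha}$, whose ray integral under $\rho=r/t_n$ equals $c\tau t_n^{\alpha-1}\int_1^\infty e^{-\beta r}r^{-\alpha}\,\d r=c\tau t_n^{\alpha-1}$; the analogous computation for $z^{-2}(z^\alpha+\lambda)^{-1}$ gives $c\tau t_n^{\alpha}$. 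Hence $|E_{\alpha,1}(-\lambda t_n^\alpha)-F_\tau^n(\lambda)|\le c\lambda\tau t_n^{\alpha-1}$ and $|t_nE_{\alpha,2}(-\lambda t_n^\alpha)-\bar F_\tau^n(\lambda)|\le c\lambda\tau t_n^{\alpha}$, which is precisely \eqref{eqn:es-02}.

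The main obstacle is therefore \eqref{eqn:es-02}: naive estimation loses the gain for small $\lambda$, and the resolution rests on the exact consistency $F_\tau^n(0)=1$, $\bar F_\tau^n(0)=t_n$ of the convolution quadrature at $\lambda=0$, which lets the leading symbols cancel and exposes an explicit factor $\lambda$ multiplying the quadrature error of a strictly smoother kernel. A secondary technical point is verifying that the truncation tail beyond $|\Im z|=\pi/\tau$ is exponentially small in $n$ (so that it does not pollute the $O(n^{-1})$ and $O(\tau t_n^{\alpha-1})$ rates) and that $|e^{-z\tau}|$ and $|1-e^{-z\tau}|$ are controlled on $\Gamma_{\theta,\sigma}^\tau$; both follow from $|z\tau|\le c$ on the truncated contour.
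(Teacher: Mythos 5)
Your proposal is correct and follows essentially the same route as the paper's proof: the same split into a truncation tail beyond $|\Im z|=\pi/\tau$ plus a quadrature error on $\Gamma_{\theta,\sigma}^\tau$ with $\sigma=t_n^{-1}$ for \eqref{eqn:es-01}, and for \eqref{eqn:es-02} the same algebraic extraction of the factor $\lambda$ via $z^{\alpha-2}(z^\alpha+\lambda)^{-1}=z^{-2}-\lambda z^{-2}(z^\alpha+\lambda)^{-1}$ (the paper writes exactly $t_nE_{\alpha,2}(-\lambda t_n^\alpha)=t_n-\frac{\lambda}{2\pi i}\int e^{zt_n}z^{-2}(z^\alpha+\lambda)^{-1}\,\d z$ and its discrete analogue, relying on the exact consistency you identify). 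The only cosmetic difference is that the paper delegates the $E_{\alpha,1}-F_\tau^n$ estimates to \cite[Lemma 4.2]{ZhangZhou:2020} rather than redoing them, while you carry out that case explicitly with the same kernel-difference bounds from Lemma \ref{lem:delta}.
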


Then Lemmas \ref{lem:ml} and \ref{lem:fully-approx}  leads to the following asymptotic behaviors of $F_\tau^n(\lambda)$ and $\bar F_\tau^n(\lambda)$.

\begin{corollary}\label{cor:fully-asymp}
Let $F_\tau^n(\lambda)$ and $\bar F_\tau^n(\lambda)$ be defined as in \eqref{eqn:FE_ht}. Then
there exists $\tau_0>0$ such that for all $\tau \in (0, \tau_0)$, $\lambda>\lambda_1$ and  $t_n \ge M(\lambda_1)$
 {\begin{align*}
-c_0  \lambda^{-1} t_n^{-\alpha}  \le  F_\tau^n(\lambda)  \le  -c_1\lambda^{-1} t_n^{-\alpha}\quad \text{and}\quad
\tilde c_0  \lambda^{-1} t_n^{1-\alpha}  \le \bar F_\tau^n(\lambda)   \le \tilde c_1\lambda^{-1} t_n^{1-\alpha},
\end{align*}}
 with positive constants $c_0$, $c_1$, $\tilde c_0$, $\tilde c_1$ independent of $\lambda$, $t$ and $\tau$.
\end{corollary}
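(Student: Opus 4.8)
The plan is to transfer the known asymptotics of the continuous Mittag--Leffler functions to the discrete operators $F_\tau^n(\lambda)$ and $\bar F_\tau^n(\lambda)$ via the approximation estimate \eqref{eqn:es-01}, exploiting that on the regime $t_n\ge M(\lambda_1)$ the factor $n^{-1}$ can be traded for a small multiple of $\tau$. First I would record two-sided bounds for the continuous operators. Setting $z=\lambda t_n^\alpha$, the hypotheses $\lambda>\lambda_1$ and $t_n\ge M(\lambda_1)$ together with assumption \eqref{ass:z0} give $z>\lambda_1 M(\lambda_1)^\alpha>z_0$, so we are in the asymptotic regime. Combining the magnitude bound \eqref{bound-ml} with the sign estimates for $z\ge z_0$ stated just before \eqref{ass:z0}, and recalling that $\Gamma(1-\alpha)<0$ while $\Gamma(2-\alpha)>0$ for $\alpha\in(1,2)$, yields
\[
-c\,\lambda^{-1}t_n^{-\alpha}\le E_{\alpha,1}(-\lambda t_n^\alpha)\le -\tfrac{1}{2|\Gamma(1-\alpha)|}\lambda^{-1}t_n^{-\alpha}<0,
\]
and, after multiplying the corresponding bound for $E_{\alpha,2}$ by $t_n$,
\[
0<\tfrac{1}{2\Gamma(2-\alpha)}\lambda^{-1}t_n^{1-\alpha}\le t_nE_{\alpha,2}(-\lambda t_n^\alpha)\le c\,\lambda^{-1}t_n^{1-\alpha}.
\]
These are precisely the two-sided bounds the corollary asserts, but for the continuous operators, and they fix the correct signs ($F_\tau^n$ negative, $\bar F_\tau^n$ positive).

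The crux is then to show that the discretization error is a small fraction of these leading terms, uniformly in $n$ and $\lambda$. The key observation is that $t_n\ge M(\lambda_1)$ forces $n^{-1}=\tau/t_n\le \tau/M(\lambda_1)$, so from \eqref{eqn:es-01},
\[
|E_{\alpha,1}(-\lambda t_n^\alpha)-F_\tau^n(\lambda)|\le \frac{cn^{-1}}{1+\lambda t_n^\alpha}\le \frac{c\tau}{M(\lambda_1)}\,\lambda^{-1}t_n^{-\alpha},
\]
where the last step uses $\frac{n^{-1}\lambda t_n^\alpha}{1+\lambda t_n^\alpha}\le n^{-1}\le \tau/M(\lambda_1)$, and likewise $|t_nE_{\alpha,2}(-\lambda t_n^\alpha)-\bar F_\tau^n(\lambda)|\le \frac{c\tau}{M(\lambda_1)}\lambda^{-1}t_n^{1-\alpha}$. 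Crucially, the error carries the same power $\lambda^{-1}t_n^{-\alpha}$ (respectively $\lambda^{-1}t_n^{1-\alpha}$) as the leading continuous term, so the ratio of error to leading term is bounded by $c\tau/M(\lambda_1)$ independently of both $\lambda$ and $n$.

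Finally I would choose $\tau_0$ so small that $c\tau_0/M(\lambda_1)\le \tfrac{1}{4}\min\!\big(\tfrac{1}{2|\Gamma(1-\alpha)|},\tfrac{1}{2\Gamma(2-\alpha)}\big)$. Writing $F_\tau^n(\lambda)=E_{\alpha,1}(-\lambda t_n^\alpha)+\big(F_\tau^n(\lambda)-E_{\alpha,1}(-\lambda t_n^\alpha)\big)$ and applying the triangle inequality then preserves both the sign and the two-sided order of magnitude, giving $-c_0\lambda^{-1}t_n^{-\alpha}\le F_\tau^n(\lambda)\le -c_1\lambda^{-1}t_n^{-\alpha}$, and the analogous inequalities for $\bar F_\tau^n(\lambda)$. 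The main obstacle is not any single estimate but the uniformity: one must make a single $\tau_0$ work for all admissible $\lambda$ and $n$, and this is exactly what the matching $\lambda$-decay in \eqref{eqn:es-01} together with the substitution $n^{-1}=\tau/t_n$ delivers. By contrast, estimate \eqref{eqn:es-02} would introduce an unwanted factor of $\lambda$ into the error-to-leading-term ratio and is therefore not suited to this purpose.
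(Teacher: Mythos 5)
Your argument is correct and is precisely the route the paper intends: the corollary is stated as a direct consequence of Lemma \ref{lem:ml} (via the sign estimates preceding \eqref{ass:z0} together with assumption \eqref{ass:z0}, which puts $\lambda t_n^\alpha$ above $z_0$) and of the error bound \eqref{eqn:es-01} in Lemma \ref{lem:fully-approx}. Your key observation that $n^{-1}=\tau/t_n\le \tau/M(\lambda_1)$ makes the perturbation a uniformly small fraction of the leading term, which is exactly the detail needed to get a single $\tau_0$ working for all admissible $\lambda$ and $n$, and your remark that \eqref{eqn:es-02} would not serve this purpose is also accurate.
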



Now we define two integers $N_1$ and $N_2$ such that $N_1 \tau = T_1$ and $N_2 \tau = T_2$, and define
\begin{equation}\label{eqn:Ght}
\GG_{h,\tau} (T_1, T_2)
= \begin{bmatrix}F_{h,\tau}^{N_1} & \bar F_{h,\tau}^{N_1} \\ F_{h,\tau}^{N_2}  & \bar F_{h,\tau}^{N_2} \end{bmatrix},\quad
G_{\tau}(T_1,T_2;\lambda_j^h) = \begin{bmatrix}
F_\tau^{N_1}(\lambda_j^h) & \bar F_\tau^{N_1}(\lambda_j^h)\\
F_\tau^{N_2}(\lambda_j^h) & \bar F_\tau^{N_2}(\lambda_j^h)
\end{bmatrix}.
\end{equation}
Then according to \eqref{eqn:sol-uhtau-1}, we have the representation
\begin{equation*}
\begin{aligned}
 \begin{bmatrix}U_{N_1} \\ U_{N_2}  \end{bmatrix} &= \GG_{h,\tau} (T_1, T_2)  \begin{bmatrix} P_h a \\  P_h b  \end{bmatrix}
 =\sum_{j=1}^J
G_{\tau}(T_1,T_2;\lambda_j^h)
\begin{bmatrix} ( a,
\fy_j^h)\fy_j^h\\ ( b,\fy_j^h)\fy_j^h \end{bmatrix}=
\sum_{j=1}^J \begin{bmatrix}
F_\tau^{N_1}(\lambda_j^h) & \bar F_\tau^{N_1}(\lambda_j^h)\\
F_\tau^{N_2}(\lambda_j^h) & \bar F_\tau^{N_2}(\lambda_j^h)
\end{bmatrix}
\begin{bmatrix} ( a,\fy_j^h)\fy_j^h\\ ( b,\fy_j^h)\fy_j^h \end{bmatrix}.
\end{aligned}
 \end{equation*}

The next lemma provides the invertibility of $\gamma\I + \GG_{h,\tau} (T_1, T_2) $.

\begin{lemma}\label{lem:inv-fully}
Let $M(\lambda_1)$ be the constant defined in Lemma \ref{lem:unique},
and suppose that $T_2 > T_1 \ge  M(\lambda_1)$. Then the operator $\gamma\I + \GG_{h,\tau} (T_1, T_2) $ is invertible,
and there holds for $v_h, w_h \in X_h$
\begin{equation*}
\Big\|   \FF_{h,\tau}^n (\gamma\I +  \GG_{h,\tau} (T_1, T_2)) ^{-1}   \begin{bmatrix}v_h\\ w_h\end{bmatrix}  \Big\|_{L^2\II}
\le c \min(\gamma^{-1}, t_n^{-\alpha})\Big(\|  v_h \|_{L^2\II} + \| w_h \|_{L^2\II}\Big)
\end{equation*}
and
\begin{equation*}
\Big\|    (\gamma\I +  \GG_{h,\tau} (T_1, T_2)) ^{-1}   \begin{bmatrix}v_h\\ w_h\end{bmatrix}  \Big\|_{L^2\II}
\le c  \gamma^{-1} \Big(\|  v_h \|_{L^2\II} + \| w_h \|_{L^2\II}\Big)  \begin{bmatrix}1\\ 1\end{bmatrix}  .
\end{equation*}
\end{lemma}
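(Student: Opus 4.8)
The plan is to follow the proof of Lemma \ref{lem:stab-semi}, diagonalizing in the eigenbasis $\{\fy_j^h\}_{j=1}^J$ of $-\Delta_h$ and reducing everything to uniform estimates on the $2\times2$ symbol matrices. Since $\gamma\I+\GG_{h,\tau}(T_1,T_2)$ is block-diagonalized by $\{\fy_j^h\}$, it suffices to study, for each $1\le j\le J$,
$$M_j=\begin{bmatrix} -\gamma+F_\tau^{N_1}(\lambda_j^h) & \bar F_\tau^{N_1}(\lambda_j^h)\\ F_\tau^{N_2}(\lambda_j^h) & \gamma+\bar F_\tau^{N_2}(\lambda_j^h)\end{bmatrix},$$
whose determinant factors as $\det M_j=\psi_\tau(T_1,T_2;\lambda_j^h)-\gamma^2+\gamma\big(F_\tau^{N_1}(\lambda_j^h)-\bar F_\tau^{N_2}(\lambda_j^h)\big)$, with the discrete symbol $\psi_\tau(T_1,T_2;\lambda):=F_\tau^{N_1}(\lambda)\bar F_\tau^{N_2}(\lambda)-\bar F_\tau^{N_1}(\lambda)F_\tau^{N_2}(\lambda)$. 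This mirrors \eqref{form:regular:determinant}, and the target is the fully discrete counterpart of \eqref{eqn:esi-tpsi-h}, namely $\det M_j\le -c\big((\lambda_j^h)^{-2}+\gamma(\lambda_j^h)^{-1}+\gamma^2\big)<0$ uniformly in $j$, whence $M_j$ (and thus $\gamma\I+\GG_{h,\tau}(T_1,T_2)$) is invertible.

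The two $\gamma$-dependent terms are immediate from Corollary \ref{cor:fully-asymp}: using \eqref{eqn:minmax} to guarantee $\lambda_j^h\ge\lambda_1$ and $T_i=N_i\tau\ge M(\lambda_1)$, that corollary gives $F_\tau^{N_1}(\lambda_j^h)<0$ and $\bar F_\tau^{N_2}(\lambda_j^h)>0$, both of magnitude comparable to $(\lambda_j^h)^{-1}$, so $\gamma\big(F_\tau^{N_1}(\lambda_j^h)-\bar F_\tau^{N_2}(\lambda_j^h)\big)\le -c\gamma(\lambda_j^h)^{-1}$ and $-\gamma^2<0$. The delicate term is $\psi_\tau$: unlike the semidiscrete case, the convolution-quadrature symbols $F_\tau^n,\bar F_\tau^n$ are not Mittag--Leffler functions, so Lemma \ref{lem:unique} does not apply verbatim, and the two $O((\lambda_j^h)^{-2})$ products in $\psi_\tau$ very nearly cancel. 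I would resolve this by comparison with the exact symbol $\psi$ of \eqref{eqn:psi}: writing each product difference in the form $(F_\tau^{N_1}-E_{\alpha,1}(-\lambda T_1^\alpha))\bar F_\tau^{N_2}+E_{\alpha,1}(-\lambda T_1^\alpha)(\bar F_\tau^{N_2}-T_2E_{\alpha,2}(-\lambda T_2^\alpha))$ and applying the $O(n^{-1})$ bound \eqref{eqn:es-01} of Lemma \ref{lem:fully-approx} together with \eqref{bound-ml}, one obtains $|\psi_\tau(T_1,T_2;\lambda_j^h)-\psi(T_1,T_2;\lambda_j^h)|\le c\tau(\lambda_j^h)^{-2}$ uniformly in $j$. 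Since \eqref{eqn:est-psi} gives $\psi(T_1,T_2;\lambda_j^h)\le -c(\lambda_j^h)^{-2}$, choosing $\tau_0$ small enough lets this perturbation be absorbed, so that $\psi_\tau(T_1,T_2;\lambda_j^h)\le -c(\lambda_j^h)^{-2}$ for all $\tau\in(0,\tau_0)$. Summing the three contributions yields the determinant bound.

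With invertibility in hand I would write $M_j^{-1}=(\det M_j)^{-1}\,\mathrm{adj}\,M_j$ and bound every entry of $\mathrm{adj}\,M_j$ by $c(\gamma+(\lambda_j^h)^{-1})$ (via the upper bounds of Corollary \ref{cor:fully-asymp}); combined with $|\det M_j|\ge c(\gamma+(\lambda_j^h)^{-1})^2$ this shows each entry of $M_j^{-1}$ is controlled by $c\lambda_j^h/(1+\gamma\lambda_j^h)$, the discrete analogue of \eqref{bound:regular:mat:G}. The second asserted estimate then follows exactly as at the end of the proof of Lemma \ref{lem:regular:estimate:ope}, using $\lambda_j^h/(1+\gamma\lambda_j^h)\le\gamma^{-1}$ and summing squares over $j$ against the orthonormality of $\{\fy_j^h\}$. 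For the first estimate I additionally need the row bound
$$\begin{bmatrix}|F_\tau^n(\lambda)| & |\bar F_\tau^n(\lambda)|\end{bmatrix}\le\frac{c}{1+\lambda t_n^\alpha}\begin{bmatrix}1 & t_n\end{bmatrix},$$
the discrete version of \eqref{bound:regular:mat:F:t}, which again follows from \eqref{bound-ml} and \eqref{eqn:es-01}. Multiplying this row against $M_j^{-1}$ mode by mode produces the factor $\lambda_j^h\big[(1+\gamma\lambda_j^h)(1+\lambda_j^h t_n^\alpha)\big]^{-1}\le\min(\gamma^{-1},t_n^{-\alpha})$ (the harmless factor $1+t_n$ being absorbed by $t_n\le T$), after which squaring and summing over $j$ finishes the proof.

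The main obstacle is the sign of the leading term $\psi_\tau$: because the fully discrete symbols are not completely monotone and are not Mittag--Leffler functions, the clean argument of Lemma \ref{lem:unique} is unavailable, and the near-cancellation of the two $O(\lambda^{-2})$ products must be controlled quantitatively by the sharp comparison with the continuous $\psi$ through \eqref{eqn:es-01}, at the price of requiring $\tau<\tau_0$. Once the determinant bound is secured, the remainder is a routine transcription of the semidiscrete proof.
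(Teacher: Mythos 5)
Your proposal is correct and follows essentially the same route as the paper: the paper likewise defines $\tilde\psi_\tau=\psi_\tau-\gamma^2+\gamma[F^{N_1}_{h,\tau}-\bar F^{N_2}_{h,\tau}]$, controls $|\psi_\tau-\psi|\le c\tau\lambda^{-2}T_1^{-\alpha}T_2^{-\alpha}$ by exactly the product-difference splitting you describe together with Lemma \ref{lem:fully-approx} and Corollary \ref{cor:fully-asymp}, absorbs the perturbation into \eqref{eqn:est-psi} to get $\tilde\psi_\tau\le -c((\lambda_j^h)^{-2}+\gamma(\lambda_j^h)^{-1}+\gamma^2)$, and then transcribes the argument of Lemma \ref{lem:regular:estimate:ope} with $p=q=0$. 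Your explicit mention of the smallness requirement $\tau<\tau_0$ (inherited from Corollary \ref{cor:fully-asymp} and needed to absorb the $O(\tau)$ perturbation) is a point the paper leaves implicit, but the substance is identical.
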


\begin{proof}
Let $\psi_\tau(T_1,T_2;\lambda_j^h)$ be the determinant of $G_\tau(T_1,T_2;\lambda_j^h)$. We define
\begin{equation*}
 {\t \psi_\tau(T_1,T_2;\lambda_j^h) = \psi_\tau(T_1,T_2;\lambda_j^h)-\gamma^2+\gamma[F_{h,\tau}^{N_1} -\bar F_{h,\tau}^{N_2}]},
\end{equation*}
Then from Lemma \ref{lem:fully-approx}  and Corollary \ref{cor:fully-asymp} we have for {$\lambda>\lambda_1$}
\begin{equation*}
\begin{aligned}
&|\psi_\tau(T_1,T_2;\lambda) - \psi(T_1,T_2;\lambda)|\\
&\le |(F_{\tau}^{N_1}(\lambda)-E_{\alpha,1}(-\lambda T_1^\alpha)\bar F_{\tau}^{N_2}(\lambda)|+|E_{\alpha,1}(-\lambda T_1^\alpha)(\bar F_{\tau}^{N_2}(\lambda)-T_2E_{\alpha,2}(-\lambda T_2^\alpha))|\\
&+|(T_1E_{\alpha,2}(-\lambda T_1^\alpha)-\bar F_{\tau}^{N_1}(\lambda))F_{\tau}^{N_2}(\lambda)|+|T_1E_{\alpha,2}(-\lambda T_1^\alpha)(E_{\alpha,1}(-\lambda T_2^\alpha)-F_{\tau}^{N_2}(\lambda))|\le c\frac{\tau}{\lambda^2 T_1^\alpha T_2^\alpha},
\end{aligned}
\end{equation*}
Combining \eqref{eqn:est-psi} with the fact $\lambda_j^h \ge \lambda_1^h>\lambda_1$ by \eqref{eqn:minmax} we have
$
 {\psi_\tau(T_1,T_2;\lambda_j^h) \le c(\lambda_j^h)^{-2} T_1^{-\alpha} T_2^{-\alpha}<0.}
$
This together with the Corollary \ref{cor:fully-asymp} leads to
\begin{equation}\label{eqn:determinant-2-c}
 | {\t \psi_\tau(T_1,T_2;\lambda_j^h)} |
\ge c\Big( (\lambda_j^h)^{-2} +  \gamma (\lambda_j^h)^{-1} +\gamma^2 \Big) > 0,
\end{equation}
where $c$ is only dependent on $T_1$, $T_2$ and $\alpha$.
Therefore, the operator $\gamma\I + \GG_{h,\tau}(T_1, T_2)$  is invertible. Finally, the desired stability estimates follows by an argument similar to
the proof of Lemma \ref{lem:regular:estimate:ope} with $p=q=0$ and Corollary \ref{cor:fully-asymp}.
\end{proof}

\subsection{Fully discrete scheme for the inverse problem}
Now, we propose a fully discrete scheme for solving the backward diffusion-wave problem.
Given $g_1^\delta$ and $g_2^\delta$, we look for $\tilde a_{h,\tau}^\delta$,
$\tilde b_{h,\tau}^\delta$ and $\tilde U_n^\delta\in X_h$ with $n=1,2,\ldots,N$ such that
\begin{equation}\label{eqn:fully-back}
\begin{aligned}
\bDal (\tilde U_n^\delta - \tilde a_{h,\tau}^\delta - t_n \tilde b_{h,\tau}^\delta)-\Delta_h\tilde U_n^\delta&=0,\quad \forall ~n=1,2,\ldots,N,\\
  {-\gamma}\tilde  a_{h,\tau}^\delta +\tilde U_{N_1}^\delta&=P_hg_1^\delta,\\
 \gamma\tilde  b_{h,\tau}^\delta +\tilde U_{N_2}^\delta&=P_hg_2^\delta
\end{aligned}
\end{equation}
with $\tilde  U_0^\delta = \tilde a_{h,\tau}^\delta$.
Then by Lemma \ref{lem:inv-fully}, the problem \eqref{eqn:fully-back} is uniquely solvable, and $\tilde U_n^\delta$ could be represented as
\begin{equation}\label{eqn:tildeUdn}
  \tilde U_n^\delta = \FF_{h,\tau}^n  \begin{bmatrix}\tilde  a_{h,\tau}^\delta\\ \tilde  b_{h,\tau}^\delta\end{bmatrix}
  = \FF_{h,\tau}^n  (\gamma\I +  \GG_{h,\tau} (T_1, T_2)) ^{-1}  \begin{bmatrix}  P_h g_1^\delta\\ P_h g_2^\delta\end{bmatrix}
\end{equation}
while $\tilde a_{h,\tau}^\delta$ and  $\tilde b_{h,\tau}^\delta$ could be written as
\begin{equation}\label{eqn:abd}
  \begin{bmatrix}\tilde  a_{h,\tau}^\delta\\ \tilde  b_{h,\tau}^\delta\end{bmatrix}
  =  (\gamma\I +  \GG_{h,\tau} (T_1, T_2)) ^{-1}  \begin{bmatrix}  P_h g_1^\delta\\ P_h g_2^\delta\end{bmatrix}.
\end{equation}

Similarly, we could define auxiliary functions $\tilde a_{h,\tau}$,
$\tilde b_{h,\tau}$ and $\tilde U_n\in X_h$ with $n=1,2,\ldots,N$ such that
\begin{equation}\label{eqn:fully-back-2}
\begin{aligned}
\bDal (\tilde U_n - \tilde a_{h,\tau} - t_n \tilde b_{h,\tau})-\Delta_h\tilde U_n&=0,\quad \forall ~n=1,2,\ldots,N,\\
 {- \gamma}\tilde  a_{h,\tau} +\tilde U_{N_1}&=P_hg_1 , ~~
 \gamma\tilde  b_{h,\tau} +\tilde U_{N_2} =P_hg_2
\end{aligned}
\end{equation}
with $\tilde  U_0  = \tilde a_{h,\tau}$.
Then the function $\tilde U_n^\delta$ could be represented as
\begin{equation}\label{eqn:tildeUn}
  \tilde U_n = \FF_{h,\tau}^n  \begin{bmatrix}\tilde  a_{h,\tau} \\ \tilde  b_{h,\tau} \end{bmatrix}
  = \FF_{h,\tau}^n  (\gamma\I +  \GG_{h,\tau} (T_1, T_2)) ^{-1}  \begin{bmatrix}  P_h g_1\\ P_h g_2 \end{bmatrix}
\end{equation}
while $\tilde a_{h,\tau} $ and  $\tilde b_{h,\tau} $ could be written as
\begin{equation}\label{eqn:ab}
  \begin{bmatrix}\tilde  a_{h,\tau} \\ \tilde  b_{h,\tau} \end{bmatrix}
  =  (\gamma\I +  \GG_{h,\tau} (T_1, T_2)) ^{-1}  \begin{bmatrix}  P_h g_1\\ P_h g_2 \end{bmatrix}.
\end{equation}

Then Lemma \ref{lem:inv-fully} immediately implies following estimates for $\tilde a_{h,\tau} - \tilde a_{h,\tau}^\delta$,
$\tilde b_{h,\tau}-\tilde b_{h,\tau}^\delta$ and $\tilde U_n-\tilde U_n^\delta$.

\begin{lemma}\label{lem:fully-est-01}
Let $M(\lambda_1)$ be the constant defined in Lemma \eqref{lem:unique},
and suppose that $T_2 > T_1 \ge  M(\lambda_1)$. Let $\tilde a_{h,\tau}^\delta$,
$\tilde b_{h,\tau}^\delta$ and $\tilde U_n^\delta $ be solutions to \eqref{eqn:fully-back},
and $\tilde a_{h,\tau}$,
$\tilde b_{h,\tau}$ and $\tilde U_n$ be solutions to \eqref{eqn:fully-back-2}.
Then there holds
\begin{equation*}
\|  \tilde U_n-\tilde U_n^\delta  \|_{L^2\II}
\le c \delta \min(\gamma^{-1}, t_n^{-\alpha})\big(\|  a \|_{L^2\II} + \| b \|_{L^2\II}\big)
\end{equation*}
and
\begin{equation*}
 \| \tilde a_{h,\tau} - \tilde a_{h,\tau}^\delta  \|_{L^2\II}   +  \| \tilde b_{h,\tau} - \tilde b_{h,\tau}^\delta  \|_{L^2\II}
\le c \delta \gamma^{-1} \big(\|  a \|_{L^2\II} + \| b \|_{L^2\II}\big)   .
\end{equation*}
\end{lemma}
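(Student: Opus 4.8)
The plan is to treat this lemma as an immediate consequence of the uniform $L^2$-stability of the fully discrete solution operator established in Lemma \ref{lem:inv-fully}, exploiting that the noisy and noise-free problems \eqref{eqn:fully-back} and \eqref{eqn:fully-back-2} are driven by the \emph{same} operator $\gamma\I + \GG_{h,\tau}(T_1,T_2)$ and differ only in their right-hand data. First I would subtract \eqref{eqn:tildeUn} from \eqref{eqn:tildeUdn}, and \eqref{eqn:ab} from \eqref{eqn:abd}, and use linearity of the solution map to write
\begin{equation*}
(\tilde U_n - \tilde U_n^\delta) = \FF_{h,\tau}^n (\gamma\I + \GG_{h,\tau}(T_1,T_2))^{-1}\begin{bmatrix} P_h(g_1 - g_1^\delta)\\ P_h(g_2 - g_2^\delta)\end{bmatrix}
\end{equation*}
and
\begin{equation*}
\begin{bmatrix}\tilde a_{h,\tau} - \tilde a_{h,\tau}^\delta \\ \tilde b_{h,\tau} - \tilde b_{h,\tau}^\delta\end{bmatrix} = (\gamma\I + \GG_{h,\tau}(T_1,T_2))^{-1}\begin{bmatrix} P_h(g_1 - g_1^\delta)\\ P_h(g_2 - g_2^\delta)\end{bmatrix}.
\end{equation*}

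Next I would apply the two stability bounds of Lemma \ref{lem:inv-fully} with $v_h = P_h(g_1 - g_1^\delta)$ and $w_h = P_h(g_2 - g_2^\delta)$: the first bound contributes the factor $\min(\gamma^{-1}, t_n^{-\alpha})$ to $\|\tilde U_n - \tilde U_n^\delta\|_{\L2Om}$, while the second contributes the factor $\gamma^{-1}$ to $\|\tilde a_{h,\tau} - \tilde a_{h,\tau}^\delta\|_{\L2Om} + \|\tilde b_{h,\tau} - \tilde b_{h,\tau}^\delta\|_{\L2Om}$, in each case multiplied by $\|P_h(g_1 - g_1^\delta)\|_{\L2Om} + \|P_h(g_2 - g_2^\delta)\|_{\L2Om}$. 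Finally, since $P_h$ is the $L^2(\Omega)$-orthogonal projection onto $X_h$ it is a contraction, $\|P_h v\|_{\L2Om} \le \|v\|_{\L2Om}$, so the noise model \eqref{eqn:noise} bounds each data term by $\delta$ and closes both estimates.

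I do not anticipate a genuine obstacle: the entire content of the lemma is carried by the invertibility and stability result Lemma \ref{lem:inv-fully}, and the remaining work is the linear subtraction, the contraction property of $P_h$, and substitution of the noise level. The only points that need a little care are matching the $\min(\gamma^{-1}, t_n^{-\alpha})$ and $\gamma^{-1}$ factors to the correct bounds and keeping the constant $c$ uniform in $h$, $\tau$, $\gamma$ and $t_n$. I note in passing that the argument delivers a bound that is in fact \emph{independent} of $a$ and $b$ (the difference is governed solely by the data perturbation $g_i-g_i^\delta$), so the factor $\|a\|_{\L2Om} + \|b\|_{\L2Om}$ in the statement is not essential; it is retained only for notational uniformity with the companion estimates feeding into Theorem \ref{thm:err-fully}.
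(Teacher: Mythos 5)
Your proposal is correct and is exactly the argument the paper intends: the paper offers no separate proof, stating only that the lemma follows immediately from Lemma \ref{lem:inv-fully} applied to the difference of the representations \eqref{eqn:tildeUdn}--\eqref{eqn:abd} and \eqref{eqn:tildeUn}--\eqref{eqn:ab}, together with the $L^2$-contractivity of $P_h$ and the noise model \eqref{eqn:noise}. Your side remark that the resulting bound is in fact independent of $\|a\|_{\L2Om}+\|b\|_{\L2Om}$ is also accurate.
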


Next, we aim to compare two auxiliary problems, i.e. \eqref{eqn:fully-back-2} and \eqref{eqn:back-semi-2}.

\begin{lemma}\label{lem:fully-est-02}
Let $M(\lambda_1)$ be the constant defined in Lemma \ref{lem:unique},
and suppose  $T_2 > T_1 \ge  M(\lambda_1)$. Let
$\tilde a_{h,\tau}$, $\tilde b_{h,\tau}$ and $\tilde U_n$ be the solutions to \eqref{eqn:fully-back-2},
and $\tu_h (t)$ be the solution to the semidiscrete problem \eqref{eqn:back-semi-2}.
Then
\begin{equation*}
 \| \tilde a_{h,\tau} - \tu_h(0)  \|_{L^2\II}   +  \| \tilde b_{h,\tau} - \partial_t\tu_h(0)  \|_{L^2\II}
\le c \big(\tau + h^2\gamma^{-1}\big) \big(\|  a  \|_{L^2\II} + \| b \|_{L^2\II}\big)   .
\end{equation*}
and
\begin{equation*}
\|  \tilde U_n-\tu_h(t_n)  \|_{L^2\II}
\le c \big(\tau t_n^{\alpha-1}+ h^2\big) \min(\gamma^{-1}, t_n^{-\alpha})\big(\|  a\|_{L^2\II} + \| b \|_{L^2\II}\big).
\end{equation*}
\end{lemma}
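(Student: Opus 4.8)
The two auxiliary problems \eqref{eqn:fully-back-2} and \eqref{eqn:back-semi-2} are driven by the \emph{same} data $[P_hg_1;P_hg_2]$ and the \emph{same} spatial operator $-\Delta_h$, and differ only in the temporal treatment. Hence the plan is to write both solutions through their solution operators and to isolate the difference between the continuous-in-time operators $\{\FF_h(t),(\gamma\I+\GG_h)^{-1}\}$ and their convolution-quadrature analogues $\{\FF_{h,\tau}^n,(\gamma\I+\GG_{h,\tau})^{-1}\}$. Since every operator involved is a function of $-\Delta_h$, I would diagonalize in the eigenbasis $\{\fy_j^h\}$ and reduce the whole estimate to scalar (modewise) kernel bounds, feeding in the forward CQ estimates of Lemma \ref{lem:fully-approx} together with the invertibility/smoothing bounds of Lemmas \ref{lem:inv-fully} and \ref{lem:stab-semi}.

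For the initial-data estimate I would start from the representations \eqref{eqn:ab} and \eqref{eqn:back-semi-2-sol} and apply the resolvent identity to obtain
\begin{equation*}
\begin{bmatrix}\tilde a_{h,\tau}-\tu_h(0)\\ \tilde b_{h,\tau}-\partial_t\tu_h(0)\end{bmatrix}=-(\gamma\I+\GG_{h,\tau})^{-1}(\GG_{h,\tau}-\GG_h)\begin{bmatrix}\tu_h(0)\\ \partial_t\tu_h(0)\end{bmatrix}.
\end{equation*}
The inner factor $\GG_{h,\tau}-\GG_h$ is precisely the forward CQ error evaluated at the \emph{fixed} levels $T_1,T_2$, so by Lemma \ref{lem:fully-approx} (with $n=N_i$, $t_{N_i}=T_i$) its $j$-th mode is $O\big(\tau(1+\lambda_j^h)^{-1}\big)$. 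Bounding the outer inverse by $c\gamma^{-1}$ (Lemma \ref{lem:inv-fully}) and using that $\|\tu_h(0)\|_{L^2\II}+\|\partial_t\tu_h(0)\|_{L^2\II}\le c(\|a\|_{L^2\II}+\|b\|_{L^2\II})$ — the $q=0$ semidiscrete analogue of Corollary \ref{cor:tu-reg}, which holds because $g_1,g_2\in\dH2$ with norms controlled by $\|a\|_{L^2\II}+\|b\|_{L^2\II}$ (Lemma \ref{lem:reg-u}) — then produces the $\tau$ contribution. The $h^2\gamma^{-1}$ term reflects the coarser control of the reconstructed data that one gets by routing it through the already-established semidiscrete spatial estimates (Corollary \ref{cor:tu-reg} and Lemma \ref{lem:err-tuh-tu}) rather than re-deriving sharp discrete bounds.

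For the estimate on $\tilde U_n-\tu_h(t_n)$ I would insert the intermediate quantity $V_n:=\FF_{h,\tau}^n[\tu_h(0);\partial_t\tu_h(0)]$ and split
\begin{equation*}
\tilde U_n-\tu_h(t_n)=\FF_{h,\tau}^n\begin{bmatrix}\tilde a_{h,\tau}-\tu_h(0)\\ \tilde b_{h,\tau}-\partial_t\tu_h(0)\end{bmatrix}+\big(\FF_{h,\tau}^n-\FF_h(t_n)\big)\begin{bmatrix}\tu_h(0)\\ \partial_t\tu_h(0)\end{bmatrix}.
\end{equation*}
The second term is a pure forward CQ error with fixed data: the sharp estimate \eqref{eqn:es-02} converts it into $c\tau t_n^{\alpha-1}$ times one extra spectral derivative of the data, controlled by $\|\tu_h(0)\|_{\dH2}+\|\partial_t\tu_h(0)\|_{\dH2}\le c\gamma^{-1}(\|a\|_{L^2\II}+\|b\|_{L^2\II})$, while the companion estimate \eqref{eqn:es-01} supplies the $t_n^{-\alpha}$ decay; interpolating the two against the $\gamma$-weights yields the advertised $\tau t_n^{\alpha-1}\min(\gamma^{-1},t_n^{-\alpha})$. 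The first term is $\FF_{h,\tau}^n$ applied to the initial-data error already bounded above, and is combined with the smoothing of $\FF_{h,\tau}^n$ (Corollary \ref{cor:fully-asymp}); this is where the $h^2\min(\gamma^{-1},t_n^{-\alpha})$ contribution is absorbed.

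I expect the main obstacle to be the delicate bookkeeping needed to retain the regularization/smoothing factor $\min(\gamma^{-1},t_n^{-\alpha})$ while \emph{simultaneously} extracting the sharp temporal rate $\tau t_n^{\alpha-1}$. The two convolution-quadrature bounds pull in opposite directions — \eqref{eqn:es-01} decays in $\lambda$ but carries only rate $\tau$, whereas \eqref{eqn:es-02} gives rate $\tau t_n^{\alpha-1}$ at the cost of an extra factor $\lambda$ — so they must be balanced modewise against the $\gamma$-dependent weights of $(\gamma\I+\GG_{h,\tau})^{-1}$, exactly as in the proof of Lemma \ref{lem:regular:estimate:ope} but now for the discrete-in-time operators. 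Once these modewise kernel estimates are assembled, summation against the $\dH2$-regularity of $g_1,g_2$ finishes the argument; the scheme mirrors the proof of Lemma \ref{lem:err-tuh-tu}, with the temporal consistency error of the CQ scheme playing the role formerly played by the spatial term $\Delta_h(P_h-R_h)\tu$.
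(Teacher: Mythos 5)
Your architecture is sound and, for the first estimate, actually cleaner than the paper's: since $\tilde a_{h,\tau},\tilde b_{h,\tau}$ and $\tu_h(0),\partial_t\tu_h(0)$ are reconstructed from the \emph{same} data $P_hg_1,P_hg_2$ on the \emph{same} finite element space, your resolvent identity contains no spatial error at all, whereas the paper splits $P_hg_i=(P_h-R_h)g_i+R_hg_i$ and incurs the $h^2\gamma^{-1}$ and $h^2$ terms precisely so that it can use $\Delta_hR_h=P_h\Delta$. Carried out correctly, your route gives the sharper bounds $c\tau$ and $c\tau t_n^{\alpha-1}\min(\gamma^{-1},t_n^{-\alpha})$ with no $h^2$ contribution, which still implies the lemma; your explanation of the $h^2\gamma^{-1}$ term is therefore confused (it is an artifact of the paper's $R_h$-detour, not of any comparison with the continuous problem), but that by itself is cosmetic.

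The genuine gap is the step ``bounding the outer inverse by $c\gamma^{-1}$ \dots then produces the $\tau$ contribution.'' It does not: the uniform bound $c\gamma^{-1}$ for $(\gamma\I+\GG_{h,\tau})^{-1}$ multiplied by the $O(\tau(1+\lambda_j^h)^{-1})=O(\tau)$ size of the consistency error yields $c\tau\gamma^{-1}$, strictly worse than the claimed $c\tau$ (in the target the $\tau$ term carries no $\gamma^{-1}$). To get $\tau$ you must pair the modewise \emph{growth} of the inverse, $c\lambda_j^h/(1+\gamma\lambda_j^h)$ (the discrete analogue of \eqref{bound:regular:mat:G}, available from \eqref{eqn:determinant-2-c} and Corollary \ref{cor:fully-asymp}), against the modewise \emph{decay} $c\tau/(1+\lambda_j^hT_1^\alpha)$ of $\GG_{h,\tau}-\GG_h$ from \eqref{eqn:es-01}; the product is $\le c\tau$ uniformly in $j$, and the $L^2\II$ bound on $\tu_h(0),\partial_t\tu_h(0)$ finishes. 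This is exactly the balancing you flag as the crux of the second estimate, so the ingredient is on your page, but the first estimate as written does not perform it. A consequential follow-on slip: in the second estimate you bound $\FF_{h,\tau}^n$ applied to the initial-data error by citing the first estimate; if that error carries the $h^2\gamma^{-1}$ term, the resulting contribution is \emph{not} dominated by $h^2\min(\gamma^{-1},t_n^{-\alpha})$ once $t_n^\alpha>\gamma$, so you must use the sharp $c\tau$ version (which your identity delivers once corrected) rather than the lemma's own statement. Finally, your appeals to $\|\tu_h(0)\|_{\dH 2}+\|\partial_t\tu_h(0)\|_{\dH 2}\le c\gamma^{-1}(\|a\|_\L2Om+\|b\|_\L2Om)$ are semidiscrete analogues of Corollary \ref{cor:tu-reg} that reduce to controlling $\|\Delta_hP_hg_i\|_\L2Om$ by $\|g_i\|_{\dH 2}$; this holds on quasi-uniform meshes via the inverse inequality and \eqref{estimate:PR}, but it should be stated, since it is exactly the point the paper sidesteps by inserting $R_h$.
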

\begin{proof}
Using representations \eqref{eqn:ab} and \eqref{eqn:back-semi-2-sol}, we derive

\begin{equation*}
\begin{aligned}
  \begin{bmatrix}  \tilde a_{h,\tau} - \tu_h(0)  \\ \tilde b_{h,\tau} - \partial_t\tu(0)  \end{bmatrix}
 &=
\Big(\gamma\I +  \GG_{h,\tau}(T_1,T_2)\Big) ^{-1}                                                                         	\begin{bmatrix} P_h g_1  \\ P_h g_2 \end{bmatrix}
-\Big( \gamma\I +  \GG_{h}(T_1,T_2)\Big) ^{-1}
    \begin{bmatrix} P_h g_1  \\ P_h g_2 \end{bmatrix}\\
& =
\Big(\gamma\I +  \GG_{h,\tau}(T_1,T_2)\Big) ^{-1}                                                                         	\begin{bmatrix} (P_h-R_h) g_1  \\ (P_h-R_h) g_2 \end{bmatrix}
+
 \Big( \gamma\I +  \GG_{h}(T_1,T_2)\Big) ^{-1}
    \begin{bmatrix} (R_h-P_h) g_1  \\ (R_h-P_h) g_2 \end{bmatrix}\\
&+
\Big(\GG_{h}(T_1,T_2) -\GG_{h,\tau}(T_1,T_2)\Big)\Big(\gamma\I +  \GG_{h,\tau}(T_1,T_2)\Big) ^{-1} \Big( \gamma\I +  \GG_{h}(T_1,T_2)\Big) ^{-1}
\begin{bmatrix} R_h g_1 \\ R_hg_2 \end{bmatrix}\\
& = I_1+I_2 + I_3.
\end{aligned}
\end{equation*}
Using Lemmas \ref{lem:stab-semi} and \ref{lem:inv-fully} we can obtain an estimate for $I_1$ and $I_2$:
\begin{equation*}
\begin{aligned}
\|I_1\|_\L2Om + \|I_2\|_\L2Om&\le ch^2\gamma^{-1} (\|g_1\|_{\dH 2}+\|g_2\|_{\dH 2})
	\begin{bmatrix} 1 \\ 1	\end{bmatrix}
 \le ch^2\gamma^{-1} (\|a\|_\L2Om+\|b\|_\L2Om)
	\begin{bmatrix} 1 \\ 1	\end{bmatrix},
\end{aligned}
\end{equation*}
where in the last inequality we use the regularity estimate in Lemma \ref{lem:reg-u}.
Then for the term $I_3$,   we apply  Lemma \ref{lem:fully-approx} and Corollary \ref{cor:fully-asymp} again to derive
\begin{equation*}
\begin{aligned}
\| {I_3}\|_\L2Om^2
&\le  c \sum_{j=1}^J  \frac{(R_hg_1,\fy_j^h)^2+(R_hg_2,\fy_j^h)^2} {\t \psi_\tau(T_1,T_2;\lambda_j^h)^2\t \psi(T_1,T_2;\lambda_j^h)^2( \lambda_j^h T_1^\alpha)^6 N_1^{2}}
		\begin{bmatrix} 1\\1 \end{bmatrix}\\
&\le c\tau^2 \sum_{j=1}^J   (\lambda_j^h)^2
		\Big((R_hg_1,\fy_j^h)^2+(R_hg_2,\fy_j^h)^2\Big)
		\begin{bmatrix} 1\\1 \end{bmatrix}.
\end{aligned}
\end{equation*}
Noting that $\Delta_hR_h = P_h\Delta$, then we apply Lemma \ref{lem:reg-u} to obtain
\begin{equation}\label{eqn:Phg}
\begin{aligned}
\|\Delta_h R_hg_1\|_\L2Om+\|\Delta_h R_hg_2\|_\L2Om&=  \|P_h\Delta g_1\|_\L2Om+ \|P_h\Delta g_2\|_\L2Om\\
&\le (\|\Delta g_1\|_\L2Om + \|\Delta g_2\|_\L2Om)\\
&\le c (\|a\|_\L2Om+\|b\|_\L2Om),
\end{aligned}
\end{equation}
In conclusion, we obtain
\begin{equation*}
\|\t a_{h,\tau} - \tu_h(0)\|_\L2Om+\|\t b_{h,\tau}-\partial_t \tu_h(0)\|_\L2Om \le c(\tau + h^2\gamma^{-1})(\|a\|_\L2Om+\|b\|_\L2Om).
\end{equation*}
Next, from \eqref{eqn:back-semi-1-sol} and \eqref{eqn:tildeUn} we derive the splitting that
\begin{equation*}
\begin{aligned}
&\t U_n - \tu_h(t_n) \\
&=
 \FF_{h,\tau}^n\Big(\gamma\I+\GG_{h,\tau}(T_1,T_2)\Big)^{-1}
 \begin{bmatrix}	P_hg_1 \\ P_hg_2	\end{bmatrix}
 - \FF_h(t_n)\Big(\gamma\I+\GG_h(T_1,T_2)\Big)^{-1}
\begin{bmatrix}	P_hg_1 \\ P_hg_2	\end{bmatrix}\\
&=
\bigg( \FF_{h,\tau}^n\Big(\gamma\I+\GG_{h,\tau}(T_1,T_2)\Big)^{-1}
 \begin{bmatrix}	(P_h-R_h)g_1 \\ (P_h-R_h)g_2	\end{bmatrix}
 + \FF_h(t_n)\Big(\gamma\I+\GG_h(T_1,T_2)\Big)^{-1}
\begin{bmatrix}	(R_h-P_h)g_1 \\ (R_h-P_h)g_2	\end{bmatrix}\bigg)\\
&\quad + \bigg(
\FF_{h,\tau}^n\Big(\gamma\I+\GG_{h,\tau}(T_1,T_2)\Big)^{-1}
		 \begin{bmatrix}	R_hg_1 \\ R_hg_2	\end{bmatrix}
-\FF_h(t_n)\Big(\gamma\I+\GG_h(T_1,T_2)\Big)^{-1}	
		\begin{bmatrix}	R_hg_1 \\ R_hg_2	\end{bmatrix}\bigg)\\
&=: I_1 + I_2.
\end{aligned}
\end{equation*}
To bound the first term $I_1$, we apply approximation properties of  $P_h$ and $R_h$, Lemmas \ref{lem:stab-semi}
and \ref{lem:inv-fully}, and the argument \eqref{eqn:Phg} to obtain
\begin{equation*}
\begin{aligned}
\|I_1\|_\L2Om &\le  ch^2\min(\gamma^{-1},t_n^{-\alpha}) (\|\Delta_h R_hg_1\|_\L2Om+\|\Delta_h R_hg_2\|_\L2Om)\\
&\le ch^2\min(\gamma^{-1},t_n^{-\alpha})(\|g_1\|_{\dH 2}+\|g_2\|_{\dH 2})\\
&\le  ch^2\min(\gamma^{-1},t_n^{-\alpha}) (\|a\|_\L2Om+\|b\|_\L2Om),
\end{aligned}
\end{equation*}
where in the last inequality we use the regularity estimate in Lemma \ref{lem:reg-u}.
For the other term $I_2$, we split it into three parts
\begin{equation*}
\begin{aligned}
I_2 &=
 {\gamma (\FF_{h,\tau}^n-\FF_h(t_n))\I}\Big(\gamma\I+\GG_{h,\tau}(T_1,T_2)\Big)^{-1}\Big(\gamma\I+\GG_h(T_1,T_2)\Big)^{-1}	
		\begin{bmatrix} R_hg_1 \\ R_hg_2 \end{bmatrix}\\
&\quad +
\FF_{h,\tau}^n(\GG_h(T_1,T_2)-\GG_{h,\tau}(T_1,T_2))\Big(\gamma\I+\GG_{h,\tau}(T_1,T_2)\Big)^{-1}\Big(\gamma\I+\GG_h(T_1,T_2)\Big)^{-1}	
		\begin{bmatrix} R_hg_1 \\ R_hg_2 \end{bmatrix}\\
&\quad +
\GG_{h,\tau}(T_1,T_2)(\FF_{h,\tau}^n-\FF_h(t_n))\Big(\gamma\I+\GG_{h,\tau}(T_1,T_2)\Big)^{-1}\Big(\gamma\I+\GG_h(T_1,T_2)\Big)^{-1}		
		\begin{bmatrix} R_hg_1 \\ R_hg_2 \end{bmatrix}  =: \sum_{i=1}^3 I_{2,i}.
\end{aligned}
\end{equation*}
Then we intend to establish bounds for those terms one by one. For the term $I_{2,1}$, we apply the spectral decomposition to obtain
\begin{equation*}
\begin{aligned}
I_{2,1} &= \sum_{j=1}^J \gamma
\begin{bmatrix}  {-(F_\tau^n (\lambda_j^h) - E_{\alpha,1}(-\lambda_j^h t_n^\alpha))}
& \bar F_\tau^n (\lambda_j^h) - t_nE_{\alpha,2}(-\lambda_j^h t_n^\alpha)   \end{bmatrix} \\
&  \qquad  \qquad   {\t \psi_\tau(T_1,T_2;\lambda_j^h)^{-1}}  \begin{bmatrix}\gamma + \bar F_\tau^{N_2}(\lambda_j^h)  & -\bar F_\tau^{N_1}(\lambda_j^h)\\
  -F_\tau^{N_2}(\lambda_j^h)  &  {-\gamma} + F_\tau^{N_1}(\lambda_j^h)   \end{bmatrix} \\
&   \qquad  \qquad  \t \psi(T_1,T_2;\lambda_j^h)^{-1}  \begin{bmatrix}  \gamma + T_2E_{\alpha,2}(-\lambda_j^h T_2^\alpha)    & -T_1E_{\alpha,2}(-\lambda_j^h T_1^\alpha)\\
-  E_{\alpha,1}(-\lambda_j^h T_2^\alpha)   &  {-\gamma} + E_{\alpha,1}(-\lambda_j^h T_1^\alpha) \end{bmatrix}
\begin{bmatrix}(R_h g_1, \fy_j^h) \fy_j^h \\  (R_h g_2, \fy_j^h) \fy_j^h   \end{bmatrix}
\end{aligned}
\end{equation*}
Using Corollary \ref{cor:fully-asymp}  and the estimate \eqref{eqn:determinant-2-c}, we obtain
\begin{equation*}
\begin{aligned}
|\t \psi_\tau (T_1,T_2;\fy_j^h)|^{-1}
\begin{bmatrix} |\gamma + \bar F_\tau^{N_2}(\lambda_j^h)|  & |-\bar F_\tau^{N_1}(\lambda_j^h)|\\
|  -F_\tau^{N_2}(\lambda_j^h) | &  | {-\gamma} + F_\tau^{N_1}(\lambda_j^h)|   \end{bmatrix} 
&\le
\frac{c \lambda_j}{1+\gamma\lambda_j}\begin{bmatrix}
1 & 1\\
1& 1
\end{bmatrix}
\le c \min(\gamma^{-1}, \lambda_j^h)  \begin{bmatrix}
1 & 1\\
1& 1
\end{bmatrix}.
\end{aligned}
\end{equation*}
This, the first estimate in Lemma \ref{lem:fully-approx} and the estimates \eqref{bound:regular:mat:G} and \eqref{eqn:Phg} imply
\begin{equation*}
\begin{aligned}
\|I_{2,1}\|_{L^2\II}^2 &\le c\tau^2 t_n^{-2}
\sum_{j=1}^J \left(\frac{\lambda_j^h}{1+\lambda_j^h t_n^\alpha }\right)^2
\big((R_hg_1,\fy_j^h)^2+(R_hg_2,\fy_j^h)^2\big)\\
&\le c\tau^2 t_n^{-2}
\sum_{j=1}^J(\lambda_j^h)^2
\big((R_hg_1,\fy_j^h)^2+(R_hg_2,\fy_j^h)^2\big),\\
&=c\tau^2 t_n^{-2}
\big(\|\Delta_hR_hg_1\|_\L2Om^2+\|\Delta_hR_hg_2\|_\L2Om^2 \big)\\
&\le c\tau^2 t_n^{-2}  \big(\|a\|_\L2Om+\|b\|_\L2Om\big)
\end{aligned}
\end{equation*}
while the second estimate in Lemma \ref{lem:fully-approx} indicates
\begin{equation*}
\begin{aligned}
\|I_{2,1}\|_{L^2\II}^2 &\le c\tau^2 t_n^{2\alpha-2} \gamma^{-2}
\sum_{j=1}^J (\lambda_j^h)^2
\big((R_hg_1,\fy_j^h)^2+(R_hg_2,\fy_j^h)^2\big)\\
&\le  c\tau^2 t_n^{2\alpha-2} \gamma^{-2}\big(\|a\|_\L2Om+\|b\|_\L2Om\big)
\end{aligned}
\end{equation*}
 Combining this two estimates we arrive at
 \begin{equation*}
\begin{aligned}
\|I_{2,1}\|_{L^2\II} &\le c\tau t_n^{\alpha-1} \min(\gamma^{-1}, t_n^{-\alpha}) \big(\|a\|_\L2Om+\|b\|_\L2Om\big)
\end{aligned}
\end{equation*}
The estimates for $I_{2,2}$ and $I_{2,3}$ follows analogously.
\end{proof}

Then we combine Lemmas \ref{lem:est-tu-u}, \ref{lem:err-tuh-tu}, \ref{lem:fully-est-01} and \ref{lem:fully-est-02} to obtain the following error estimate
for the fully discrete scheme \eqref{eqn:fully-back}.
\begin{theorem}
\label{thm:err-fully}
Let $M(\lambda_1)$ be the constant defined in Lemma \ref{lem:unique},
and suppose that $T_2 > T_1 \ge  M(\lambda_1)$. Let
$\tilde a_{h,\tau}^\delta$, $\tilde b_{h,\tau}^\delta$ and $\tilde U_n^\delta$ be the solutions to \eqref{eqn:fully-back},
and $u$ be the exact solution to the problem \eqref{eqn:fde}.
If $a,b\in \dH q$ with $q\in[0,2]$, then there holds
\begin{equation*}
 \| \tilde a_{h,\tau}^\delta- a \|_{L^2\II}   +  \| \tilde b_{h,\tau}^\delta - b \|_{L^2\II}
\le c \big(\gamma^{\frac{q}{2}} + \tau + (h^2+\delta)\gamma^{-1}\big)   .
\end{equation*}
and
\begin{equation*}
\|  \tilde U_n^\delta-u(t_n)  \|_{L^2\II}
\le c \Big[\gamma \min(\gamma^{-(1-\frac q2)}, t_n^{-(1-\frac q2)\alpha})+ (\tau t_n^{\alpha-1}+ h^2 +\delta \big) \min(\gamma^{-1}, t_n^{-\alpha}) \Big].
\end{equation*}
Moreover, if $a,b\in L^2\II$, then for any $s\in(0,1]$
\begin{equation*}
 \| \tilde a_{h,\tau} ^\delta- a \|_{L^2\II}   +  \| \tilde b_{h,\tau}^\delta - b \|_{H^{-s}\II}   \rightarrow0,\qquad \text{as}~~
\gamma, \tau \rightarrow0,~ \frac{\delta}{\gamma} \rightarrow0, ~ \frac{h}{\gamma} \rightarrow0
\end{equation*}
In the estimate, the constant $c$ may depend on $T_1$, $T_2$, $T$, $a$ and $b$, but is always independent of $\tau$, $h$, $\gamma$, $\delta$ and $t$.
\end{theorem}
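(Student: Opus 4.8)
The plan is to prove all three assertions by inserting the chain of auxiliary problems built in this section and in Sections~\ref{sec:stab}--\ref{sec:semi}, and then applying the four error lemmas term by term. For the initial data I would use the telescoping decomposition
\begin{align*}
\tilde a_{h,\tau}^\delta - a &= (\tilde a_{h,\tau}^\delta - \tilde a_{h,\tau}) + (\tilde a_{h,\tau} - \tu_h(0)) + (\tu_h(0) - \tu(0)) + (\tu - u)(0),
\end{align*}
together with the analogous one for $\tilde b_{h,\tau}^\delta - b$ in terms of $\partial_t$-traces, recalling that $u(0)=a$ and $\partial_t u(0)=b$ for the exact solution of \eqref{eqn:fde}. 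The four differences are controlled, respectively, by Lemma~\ref{lem:fully-est-01} (noise propagation, $c\delta\gamma^{-1}$), Lemma~\ref{lem:fully-est-02} (time discretization, $c(\tau+h^2\gamma^{-1})$), Lemma~\ref{lem:err-tuh-tu} (spatial discretization, $ch^2\gamma^{-1}$), and Lemma~\ref{lem:est-tu-u}(i) (regularization error, $c\gamma^{q/2}$). Summing and absorbing the two $h^2\gamma^{-1}$ contributions into one term yields the claimed bound $c(\gamma^{q/2}+\tau+(h^2+\delta)\gamma^{-1})$.

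For the time-level estimate I would apply the parallel splitting
\begin{align*}
\tilde U_n^\delta - u(t_n) &= (\tilde U_n^\delta - \tilde U_n) + (\tilde U_n - \tu_h(t_n)) + (\tu_h - \tu)(t_n) + (\tu - u)(t_n),
\end{align*}
now invoking the $t$-dependent halves of the same four lemmas: Lemma~\ref{lem:fully-est-01} gives $c\delta\min(\gamma^{-1},t_n^{-\alpha})$; Lemma~\ref{lem:fully-est-02} gives $c(\tau t_n^{\alpha-1}+h^2)\min(\gamma^{-1},t_n^{-\alpha})$; Lemma~\ref{lem:err-tuh-tu} gives $ch^2\min(\gamma^{-1},t_n^{-\alpha})$; and Lemma~\ref{lem:est-tu-u}(i) supplies the regularization term $c\gamma\min(\gamma^{-(1-q/2)},t_n^{-(1-q/2)\alpha})$. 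Collecting the three noise/discretization contributions under the common prefactor $\min(\gamma^{-1},t_n^{-\alpha})$ reproduces exactly the stated bound.

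For the qualitative convergence when only $a,b\in\L2Om$ (the endpoint $q=0$), the rate in part~(i) is useless for the regularization term, since $\gamma^{q/2}=1$ does not vanish; here I would instead invoke Lemma~\ref{lem:est-tu-u}(ii), which gives $\|(\tu-u)(0)\|_{\L2Om}+\|\partial_t(\tu-u)(0)\|_{\dH{-s}}\to0$ as $\gamma\to0$. Using the same two decompositions, I bound the first three differences of $\tilde b_{h,\tau}^\delta - b$ in $\dH{-s}$ by their $\L2Om$ norms (since $\|\cdot\|_{\dH{-s}}\le c\|\cdot\|_{\L2Om}$ for $s\ge0$) and apply Lemmas~\ref{lem:fully-est-01}, \ref{lem:fully-est-02}, \ref{lem:err-tuh-tu} to get $O(\delta\gamma^{-1})+O(\tau+h^2\gamma^{-1})$; the $\L2Om$ estimate for $\tilde a_{h,\tau}^\delta - a$ is handled identically. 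These vanish under the hypotheses $\delta/\gamma\to0$, $\tau\to0$ and $h/\gamma\to0$, while the final traces vanish by part~(ii).

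The calculations are entirely routine once the four lemmas are available, so there is no serious analytic obstacle; the only points requiring care are the bookkeeping of the time-dependent factors $\min(\gamma^{-1},t_n^{-\alpha})$ and $\min(\gamma^{-(1-q/2)},t_n^{-(1-q/2)\alpha})$, and the recognition that the $q=0$ case must be treated by the qualitative statement~(ii) rather than by letting $q\to0$ in the rate~(i). I would also record the elementary observation that $h/\gamma\to0$ together with $\gamma\to0$ forces $h\to0$, and hence $h^2\gamma^{-1}=h\cdot(h/\gamma)\to0$, which is what makes the spatial and time-discretization contributions in the convergence statement disappear.
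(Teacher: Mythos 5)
Your proposal is correct and follows exactly the paper's route: the paper proves this theorem precisely by combining Lemmas \ref{lem:est-tu-u}, \ref{lem:err-tuh-tu}, \ref{lem:fully-est-01} and \ref{lem:fully-est-02} through the same telescoping decompositions you write down, including the use of Lemma \ref{lem:est-tu-u}(ii) for the qualitative $q=0$ convergence statement. Nothing is missing.
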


\section{Numerical results}\label{sec:numerics}
In this section, we illustrate our theoretical results by
presenting some one- and two-dimensional examples. Throughout, we consider the observation data
\begin{equation*}
g_\delta=u(T)+\varepsilon \delta \sup_{x\in\Omega}u(x,T)\quad \text{and}
\quad g_\delta=u(T)+\varepsilon \delta \sup_{x\in\Omega}u(x,T),
\end{equation*}
$\varepsilon$ is generated following the standard Gaussian distribution and $\delta$ denotes the (relative) noise level.
Throughout this section, we fix $T_1=1$ and $T_2 = 1.2$.
To examine  the \textsl{a priori} estimates in Sections \ref{sec:semi} and \ref{sec:fully},
we begin with a one-dimensional diffusion-wave
model \eqref{eqn:fde} in the unit interval  $\Omega=(0,1)$.
We use the standard piecewise linear FEM with uniform mesh size $h=1/(J+1)$
for the space discretization, and the backward Euler convolution quadrature method with
uniform step size $\tau=T/N$ for the time discretization.
To solve the discrete system \eqref{eqn:fully-back}, we
apply the following direct method by spectral decomposition.
For the uniform mesh size $h=1/(J+1)$, we let $x_i = ih$ for all $i=0,1,\ldots,J+1$. Then the eigenvalues and eigenfunctions of $-\Delta_h$ have the closed form:
\begin{equation}\label{eqn:semi-eigenpair}
\lambda^h_j=\frac{6}{h^2}\frac{1-\cos(j\pi h)}{2+\cos(j\pi h)},
\quad \fy^h_j(x_i)=\sqrt{2}\sin(j\pi x_i),\quad i,j=1,2,\cdots,J.
\end{equation}
We compute the observation data $u(T_1)$, $u(T_2)$ and reference solution $u(t)$
by using the semidiscrete scheme with a very fine mesh size, i.e., $h=1/2000$.

For each example, we measure the errors of semidiscrete scheme
\begin{align*}
e_{\text{ini},s} &=  \frac{\| \tu_h^\delta(0)-a\|_\L2Om}{\|a\|_\L2Om}+\frac{\|\partial_t \tu_h^\delta(0) - b\|_\L2Om}{\|b\|_\L2Om},~~
e_{s}(t)  = \frac{\|\tu_h^\delta(t)-u(t)\|_\L2Om}{\|u(t)\|_\L2Om}\quad \text{for some}~~ t>0,
\end{align*}
and  the errors of fully discrete  scheme
\begin{align*}
e_{\text{ini},f} &=  \frac{\|\tilde a_{h,\tau}^\delta-a\|_\L2Om}{\|a\|_\L2Om}+\frac{\|\tilde b_{h,\tau}^\delta - b\|_\L2Om}{\|b\|_\L2Om},
e_{f}^n = \frac{\|\tilde U_n^\delta-u(t_n)\|_\L2Om}{\|u(t_n)\|_\L2Om}\quad \text{for some}~~ {n\ge 1}.
\end{align*}
The normalization enables us to observe the behaviour of the error with respect to $\alpha$ and $t$.
\vskip5pt

 \paragraph{\bf Example (1): smooth initial data.} We start with the smooth initial condition
$$a(x)= -\sin(\pi x),\quad b(x) = x(1-x) \in \dH2 = H^2\II\cap H_0^1\II,$$
and source term $f\equiv0$.
We compute the solution of the regularized semidiscrete scheme \eqref{eqn:back-semi-1-sol},
by using the  formulae
\begin{equation*} 
\begin{aligned}
&\begin{bmatrix}
\tu_h^\delta(0) \\ \partial_t \tu_h^\delta(0)
\end{bmatrix}
 =\sum_{j=1}^J \t \psi(T_1,T_2;\lambda_j^h)^{-1}
 \begin{bmatrix}
 \gamma + T_2E_{\alpha,2}(-\lambda_j^h T_2^\alpha) & -T_1E_{\alpha,2}(-\lambda_j^hT_1^\alpha)\\
 -E_{\alpha,1}(-\lambda_j^h T_2^\alpha) & -\gamma +E_{\alpha,1}(-\lambda_j^h T_1^\alpha)
 \end{bmatrix}
 \begin{bmatrix}
 (P_hg_1^\delta,\fy_j^h)\fy_j^h \\ (P_hg_2^\delta,\fy_j^h)\fy_j^h
 \end{bmatrix},\\
&\tu_h^\delta(t)
= \sum_{j=1}^J \tilde\psi(T_1,T_2;\lambda_j^h)^{-1}
\begin{bmatrix}
E_{\alpha,1}(-\lambda_j^ht^\alpha)&tE_{\alpha,2}(-\lambda_j^ht^\alpha)
\end{bmatrix}\\
&\hspace{.3\textwidth}
\begin{bmatrix}
\gamma+T_2E_{\alpha,2}(-\lambda_j^hT_2^\alpha)& {-T_1E_{\alpha,2}(-\lambda_j^hT_1^\alpha)}\\  {-E_{\alpha,1}(-\lambda_j^hT_2^\alpha)}
 &  {-\gamma}+E_{\alpha,1}(-\lambda_j^hT_1^\alpha)
\end{bmatrix}
\begin{bmatrix}
(P_hg_1^\delta,\fy_j^h)\fy_j^h\\(P_hg_2^\delta,\fy_j^h)\fy_j^h
\end{bmatrix},
\end{aligned}
\end{equation*}
where  $(\lambda_j^h,\fy_j^h)$, for $j=1,\cdots, {J}$ are given by \eqref{eqn:semi-eigenpair}.
To accurately evaluate
the Mittag-Leffler functions, we employ the numerical algorithm developed in \cite{Seybold:2008}.

\begin{figure}[htbp]
\centering
\begin{subfigure}{.4\textwidth}
\centering
\includegraphics[scale=0.4]{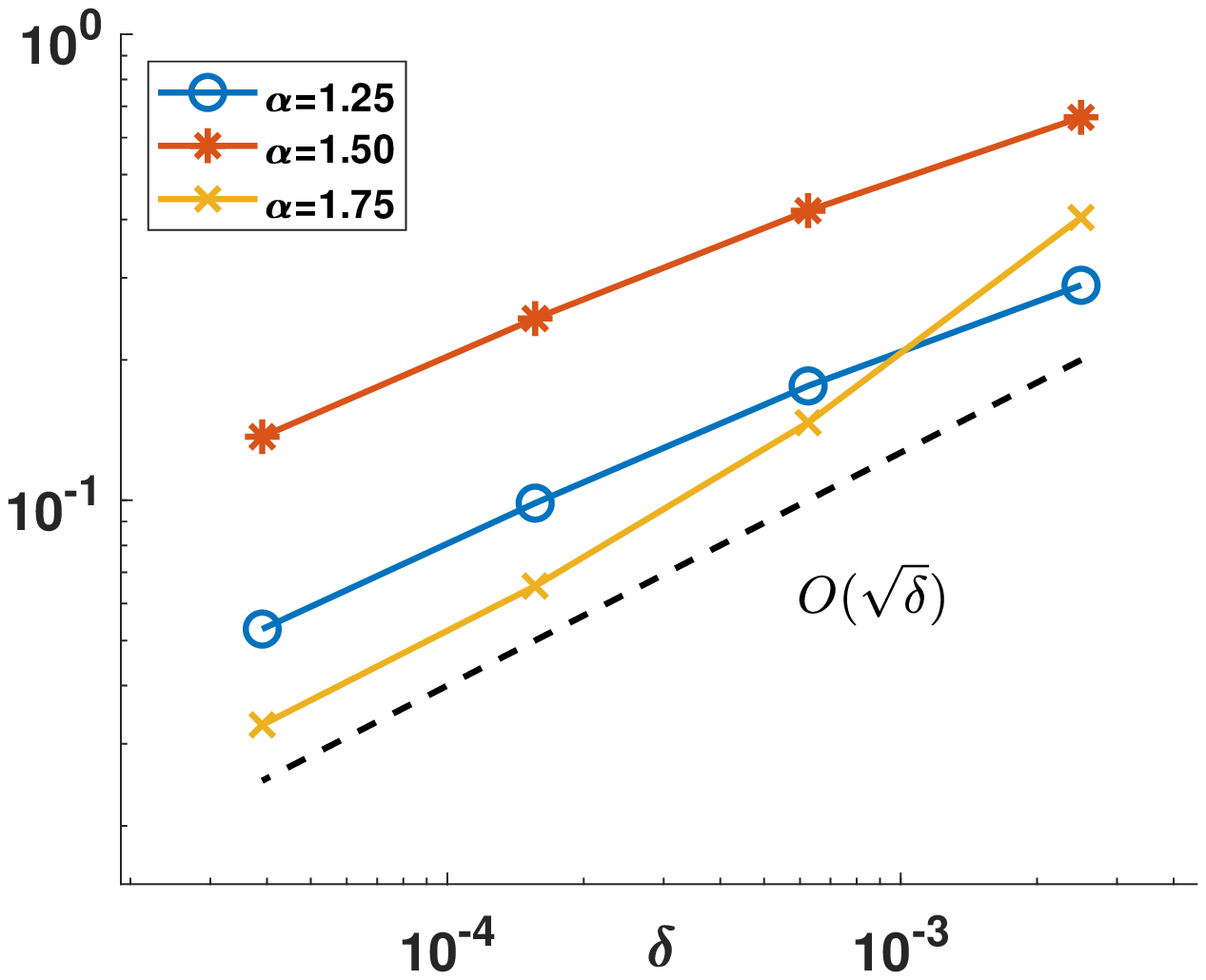}
\caption{$e_{\text{ini},s}$.}
\end{subfigure}%
\begin{subfigure}{.4\textwidth}
\centering
\includegraphics[scale=0.4]{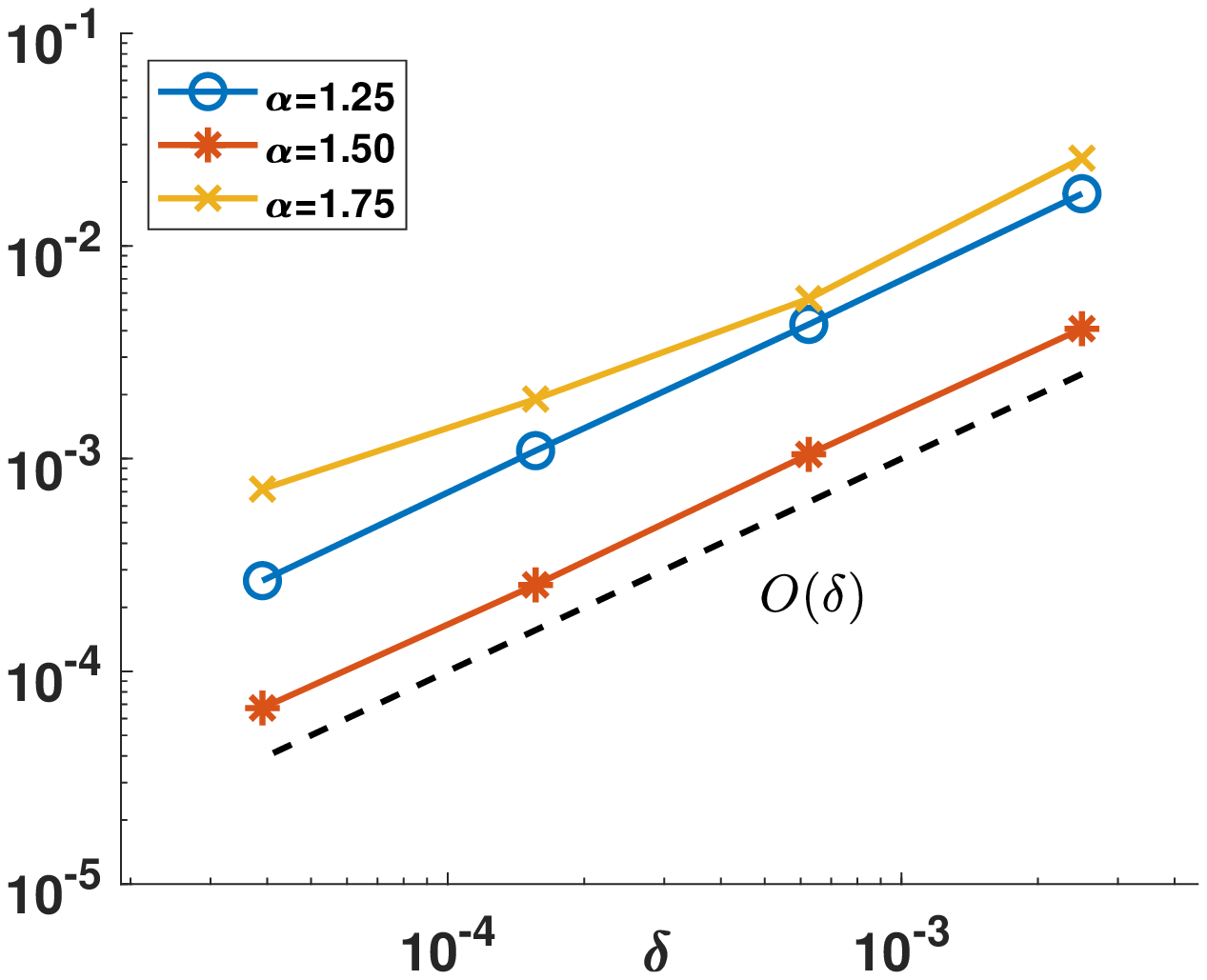}
\caption{$e_{s}(t)$ with $t=0.5$.}
\end{subfigure}\\%
\begin{subfigure}{.4\textwidth}
\centering
\includegraphics[scale=0.4]{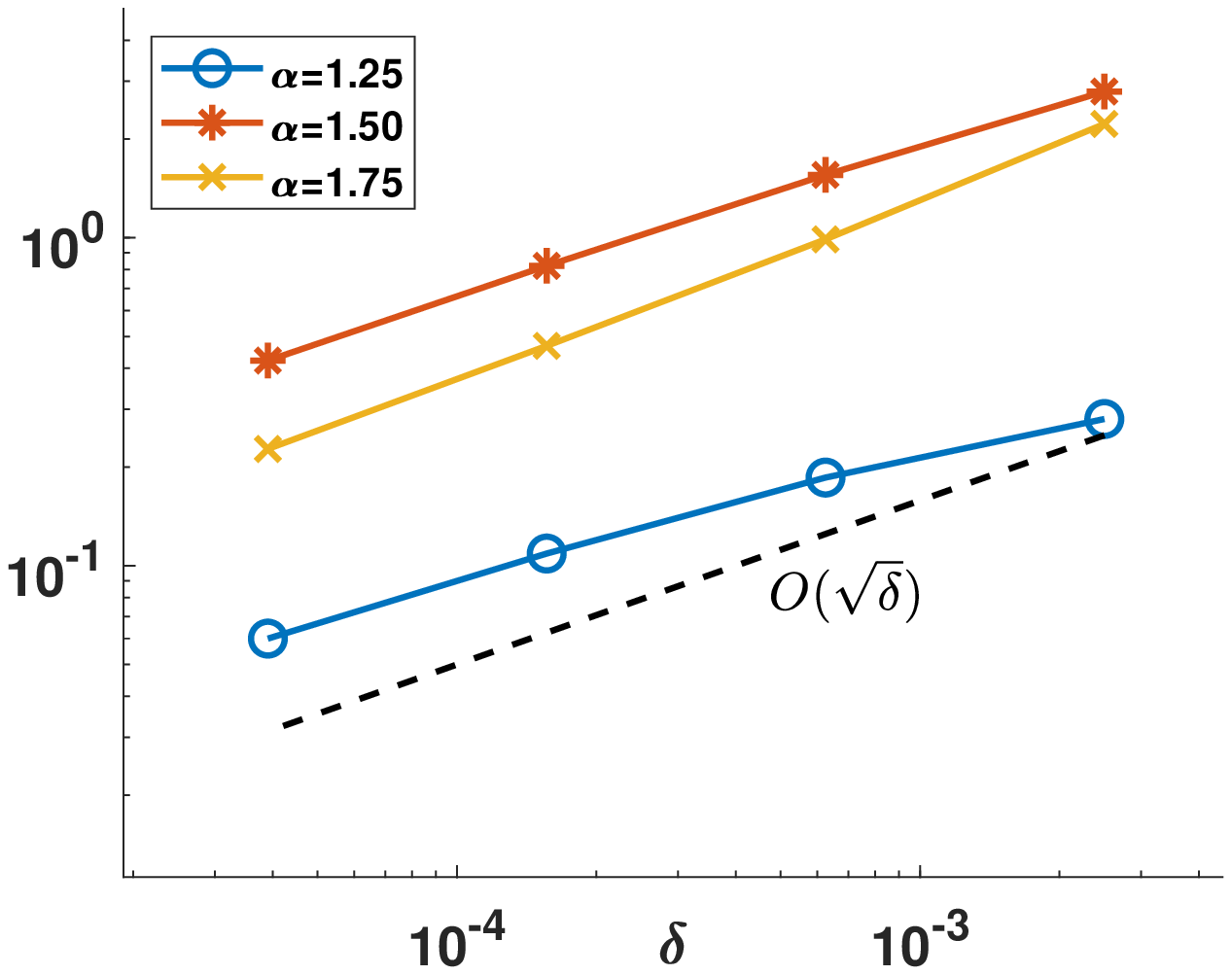}
\caption{$e_{ini,f}$.}
\end{subfigure}%
\begin{subfigure}{.4\textwidth}
\centering
\includegraphics[scale=0.4]{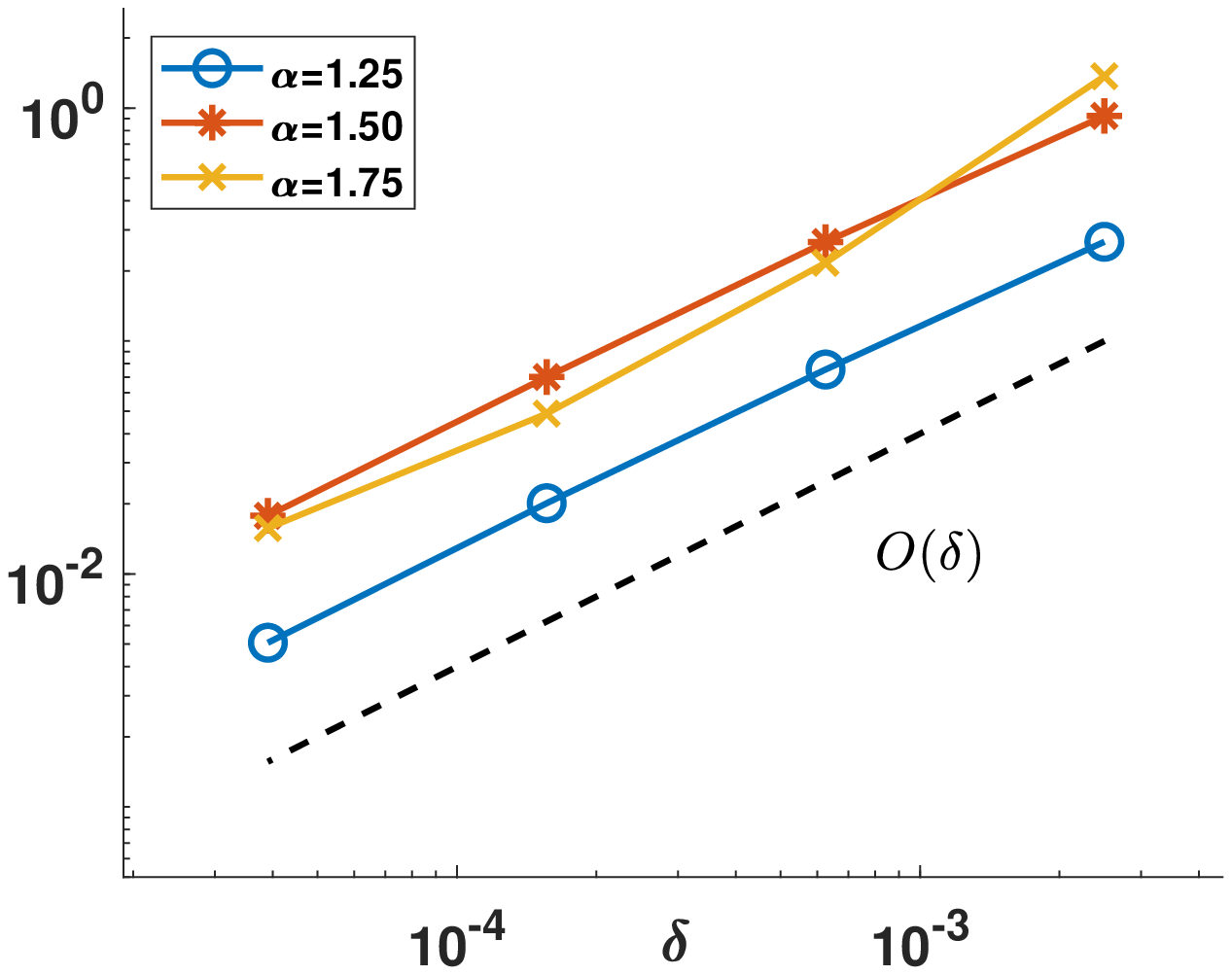}
\caption{$e_f^n$ with $t_n=0.5$.}
\end{subfigure}
\caption{Example (1): semidiscrete errors ((a) and (b)) and fully discrete errors ((c) and (d)).
(a): $h=\sqrt{\delta}$ and $\gamma=\sqrt{\delta}/12,\sqrt{\delta},\sqrt{\delta}/2$ for $\alpha=1.25,1.5,1.75$  respectively.
(b):  $h=\sqrt{\delta}$ and $\gamma=\sqrt{\delta}/5,\sqrt{\delta}/5,\sqrt{\delta}/2$  for $\alpha=1.25,1.5,1.75$respectively.
(c): $h=\sqrt{\delta}$, $\tau = \sqrt{\delta}/2$ and $\gamma = \sqrt{\delta}/10$,
$\sqrt{\delta}/10$, $\sqrt{\delta}/15$ for $\alpha=1.25,1.5,1.75$ respectively.
(d):  $h=\sqrt{\delta}$, $\tau = 10\delta$ and $\gamma = \delta$, $\delta/2$, $\delta/2$ for $\alpha=1.25,1.5,1.75$ respectively.}
\label{fig:smooth}
\end{figure}

By Theorem \ref{thm:err-semi}, we compute $\tu^\delta_h(0)$ and
$\partial_t \tu^\delta_h(0)$ by choosing the parameters $\gamma \sim \sqrt{\delta}$ and
and $h \sim \sqrt{\delta}$ for a given $\delta$, and expect a convergence of order $O(\sqrt{\delta})$.
For $t>0$, we compute $\tu_h^\delta(t)$ by choosing the parameters $h\sim \sqrt{\delta}$, $\gamma \sim \delta$
for a given $\delta$, and  expect a convergence of order $O(\delta)$.
In Figure \ref{fig:smooth} (a) and (b), we plot the errors of semidiscrete solutions 
with different fractional order $\alpha$.
Our numerical experiments fully support our theoretical results in Theorem \ref{thm:err-semi}.
It is interesting to observe that the error in case of $\alpha=1.5$ is bigger when reconstructing the initial condition, while
the error for $\alpha=1.5$ becomes smaller when we compute the solution at time level $t>0$.

Similarly, we compute the numerical solutions to the fully discrete scheme \eqref{eqn:fully-back}
by using the formulae
\begin{equation*} 
\begin{aligned}
&\begin{bmatrix}
\t a_{h,\tau}^\delta \\ \t b_{h,\tau}^\delta
\end{bmatrix}
 =\sum_{j=1}^J \t \psi_\tau(T_1,T_2;\lambda_j^h)^{-1}
 \begin{bmatrix}
 \gamma + \bar F_{h,\tau}^{N_2} & -\bar F_{h,\tau}^{N_1} \\
-F_{h,\tau}^{N_2} & -\gamma +F_{h,\tau}^{N_1}
 \end{bmatrix}
 \begin{bmatrix}
 (P_hg_1^\delta,\fy_j^h)\fy_j^h \\ (P_hg_2^\delta,\fy_j^h)\fy_j^h
 \end{bmatrix},\\
&\t U_n^\delta
= \sum_{j=1}^J \tilde\psi_\tau(T_1,T_2;\lambda_j^h)^{-1}
\begin{bmatrix}
F_{h,\tau}^{n} & \bar F_{h,\tau}^{n}
\end{bmatrix}
 \begin{bmatrix}
 \gamma + \bar F_{h,\tau}^{N_2} & -\bar F_{h,\tau}^{N_1} \\
F_{h,\tau}^{N_2} & -\gamma +F_{h,\tau}^{N_1}
 \end{bmatrix}
\begin{bmatrix}
(P_hg_1^\delta,\fy_j^h)\fy_j^h\\(P_hg_2^\delta,\fy_j^h)\fy_j^h
\end{bmatrix}.
\end{aligned}
\end{equation*}
Then Theorem \ref{thm:err-fully} implies that for $a,b\in \dH2$
\begin{equation*}
\|a_{h,\tau}^\delta-a\|_\L2Om+\|b_{h,\tau}^\delta-b\|_\L2Om \le c(\gamma + \tau + (h^2+\delta)\gamma^{-1}),
\end{equation*}
and
\begin{equation*}
\|\t U_n^\delta - u(t_n)\|_\L2Om \le c(\gamma +\tau + h^2+\delta),\quad \text{for a fixed}~~ t_n > 0.
\end{equation*}
Therefore, with a given noise level $\delta$, to recover the initial data $a$ and $b$, we choose parameters
$h\sim\sqrt{\delta}$, $\tau \sim \sqrt{\delta}$ and $\gamma \sim \sqrt{\delta}$,
while to approximate solution $u(t_n)$ with some $t_n>0$,
we let  $h\sim\sqrt{\delta}$, $\tau \sim \delta$, $\gamma \sim \delta $.
According to Theorem \ref{thm:err-fully}, we expect that the convergence rate for the error $e_{\text{ini},f}$ is
$O(\sqrt{\delta})$ while the error $e_{f}^n$ converges to zero as $O(\delta)$ for any fixed $t_n>0$.
They are fully supported by numerical results plotted in Figure \ref{fig:smooth} (c) and (d).
\begin{figure}[htbp]
\centering
\begin{subfigure}{.4\textwidth}
\centering
\includegraphics[scale=0.4]{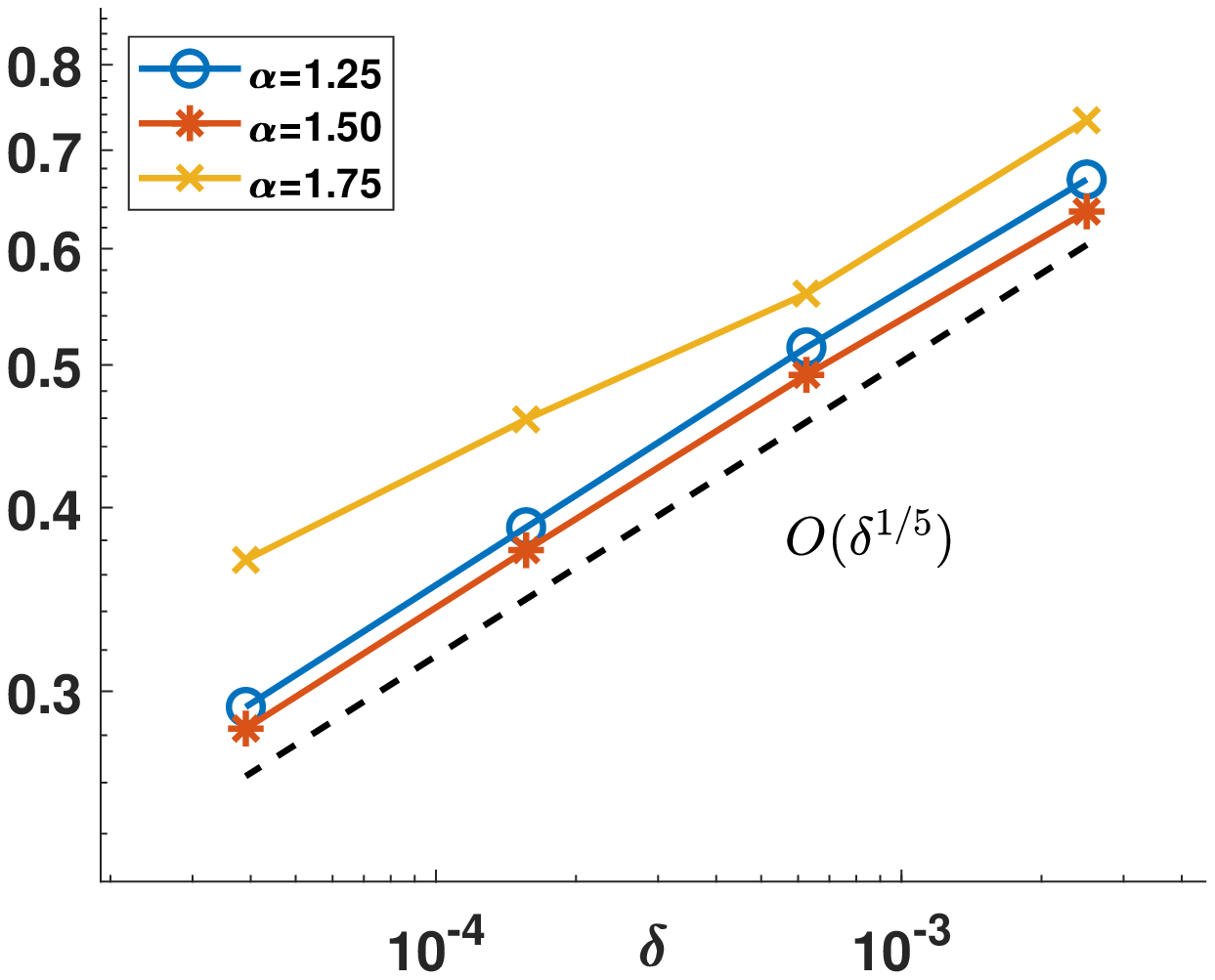}
\caption{$e_{ini,s}$.}
\end{subfigure}%
\begin{subfigure}{.4\textwidth}
\centering
\includegraphics[scale=0.4]{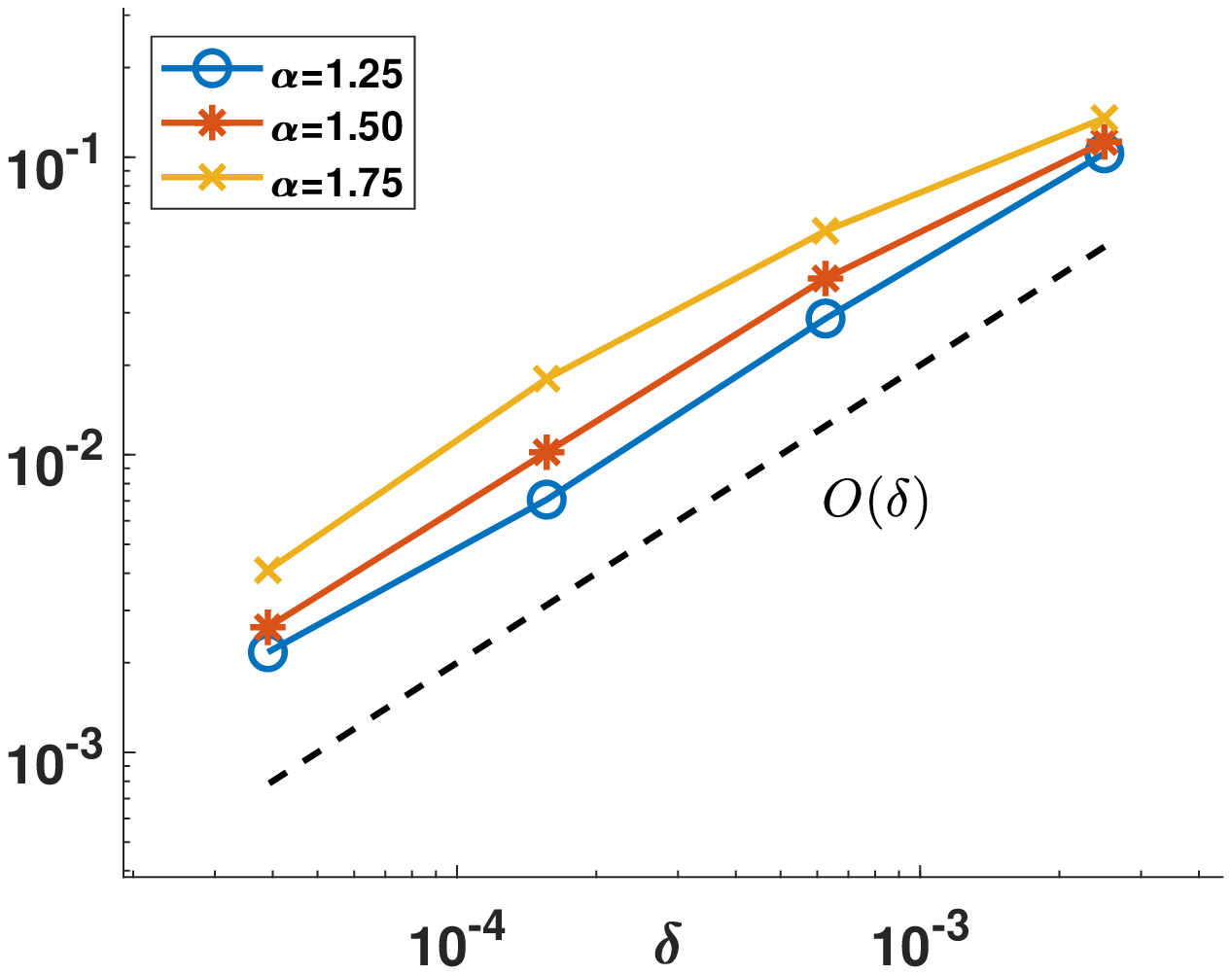}
\caption{$e_s(t)$ with $t=0.5$.}
\end{subfigure}\\%
\begin{subfigure}{.4\textwidth}
\centering
\includegraphics[scale=0.4]{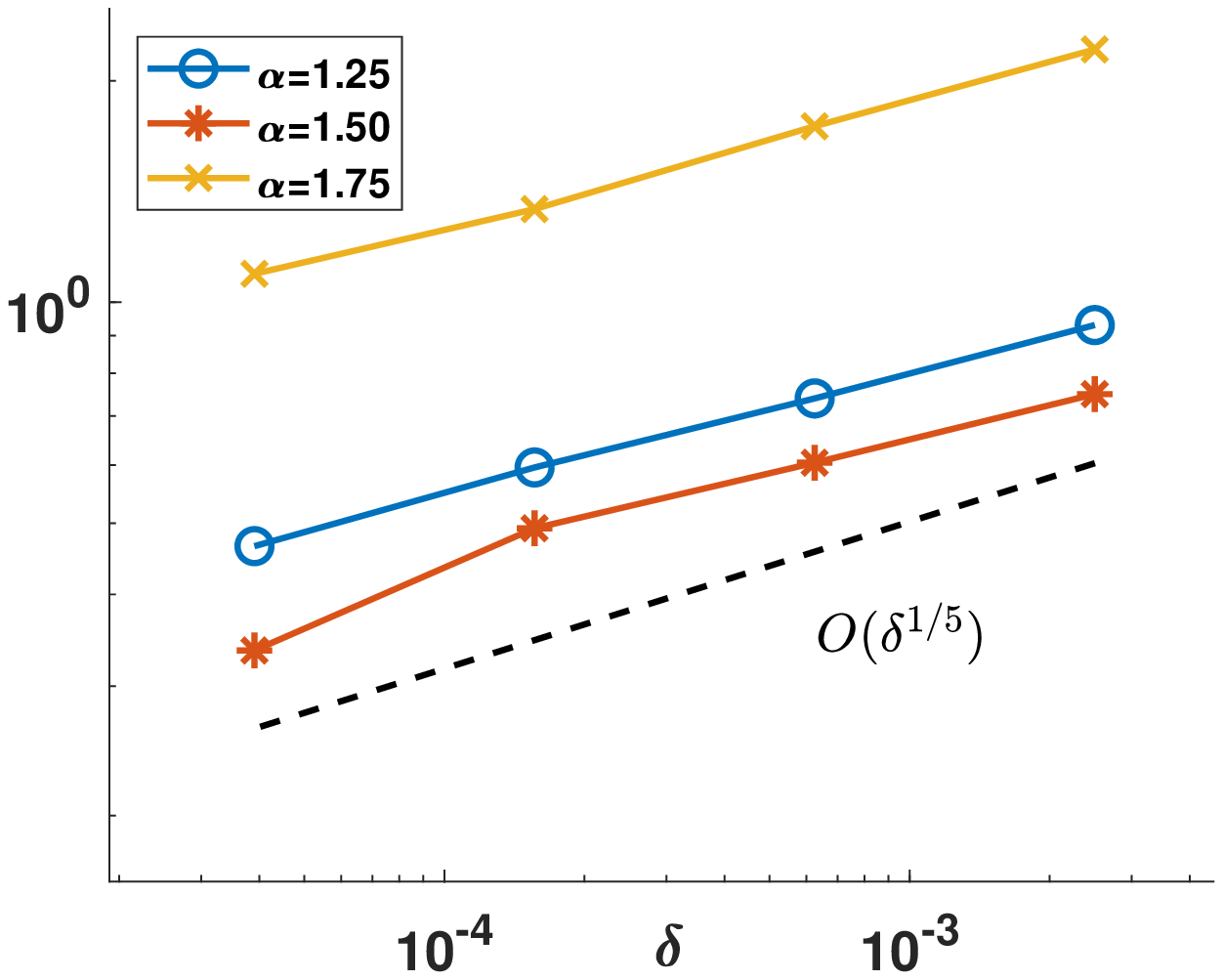}
\caption{$e_{ini,f}$.}
\end{subfigure}%
\begin{subfigure}{.4\textwidth}
\centering
\includegraphics[scale=0.4]{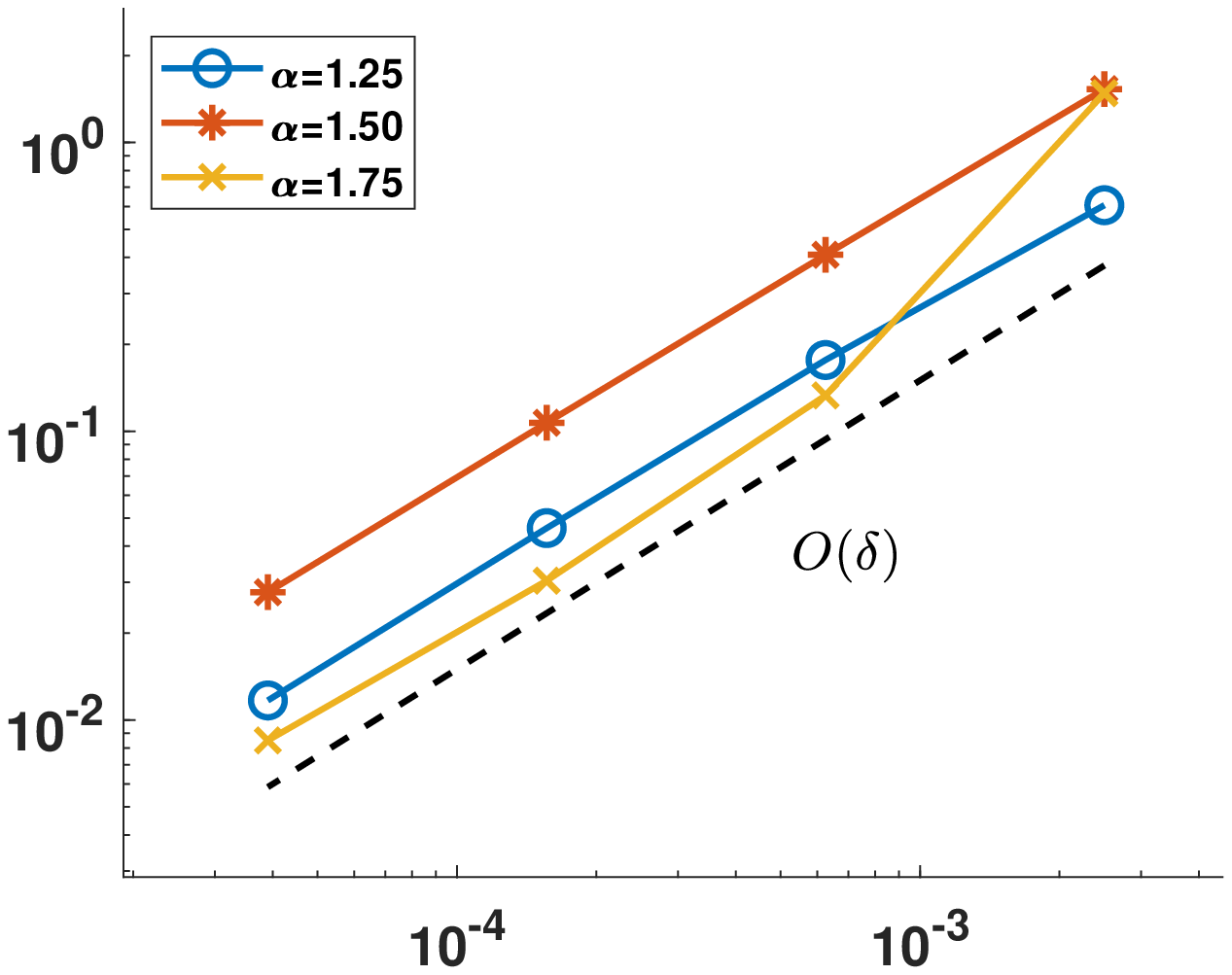}
\caption{$e^n_f$ with $t_n=0.5$.}
\end{subfigure}
\caption{Example (2): semidiscrete errors ((a) and (b)) and fully discrete errors ((c) and (d)).
(a):  $h=\sqrt{\delta}$  and $\gamma = \delta^{4/5}/15,\delta^{4/5}/15,\delta^{4/5}/8$ for $\alpha=1.25,1.5,1.75$ respectively.
(b):  $h = \sqrt{\delta}$ and $\gamma = \delta /10,\delta /5,\delta /5$ for $\alpha=1.25,1.5,1.75$ respectively.
(c):  $h=\sqrt{\delta}$,  $\tau = \delta^{1/5}/20$ and $\gamma = \delta^{4/5}/ 2, \delta^{4/5}/ 15, \delta^{4/5}/ 2 $ for $\alpha =  1.25,1.5,1.75 $ respectively.
(d):  $h=\sqrt{\delta}$,  {$\tau = 10\delta $}, $\gamma = \delta /10,\delta ,\delta /2$ for $\alpha = 1.25,1.5,1.75 $  respectively.
}
\label{fig:nonsmooth}
\end{figure}


\paragraph{\bf Example (2): non-smooth initial data.} Next, we turn to the case of nonsmooth data and expect to examine the influence of
weak  regularity of problem data. Consider
\begin{equation*}
a(x) = \begin{cases}
0, \ 0\le x\le 0.5;\\
1, \ 0.5\le x\le 1.
\end{cases}
,\quad
b(x) = \begin{cases}
1, \ 0\le x\le 0.5;\\
0, \ 0.5\le x\le 1.
\end{cases}
\end{equation*}
and source term $f\equiv0$.
It is well-known that $a,b\in \dH{\frac 12-\varepsilon}$ for any $\varepsilon\in(0,\frac12]$.
According to Theorem \ref{thm:err-semi}, the error of the semidiscrete discrete solution satisfies
\begin{equation*}
\begin{aligned}
&\| \tu_h^\delta -a \|_\L2Om  + \|\partial_t  \tu_h^\delta- b \|_\L2Om \le c(\gamma^{\frac q2} +(h^2+\delta)\gamma^{-1}),\\
&\|(\tu_h^\delta-u)(t)\|_\L2Om  \le c(\gamma+h^2+\delta),\quad \text{for a given}~~ t>0.
\end{aligned}
\end{equation*}
Therefore, for given $\delta$, to numerically reconstruct the initial data $a$ and $b$, we let $h=\sqrt{\delta}$,
and $\gamma \sim \delta^{4/5}$ and expect that the error converges to zero as $O(\delta^{\frac15})$, while
to approximate $u(t)$ for some $t>0$, we let $h \sim \sqrt{\delta}$ and
 $\gamma \sim \delta$ and expect a convergence of order $O(\delta)$.
The theoretical results agrees well with the numerical
results in Figure \ref{fig:nonsmooth} (a) and (b).

In Figure \ref{fig:nonsmooth} (c) and (d) we plot errors of the numerical reconstruction by fully discrete scheme \eqref{eqn:fully-back}.
According to Theorem \ref{thm:err-fully} we have the error estimate that (with $q=\frac12-\varepsilon $)
\begin{equation*}
\begin{aligned}
&\|a_{h,\tau}^\delta-a\|_\L2Om  + \|b_{h,\tau}^\delta-b\|_\L2Om \le c(\gamma^{\frac q2} +\tau +(h^2+\delta)\gamma^{-1}),\\
&\|\t U_n^\delta -u(t_n)\|_\L2Om \le c(\gamma + \tau +h^2 + \delta),\quad \text{for any fixed}~~t_n > 0.
\end{aligned}
\end{equation*}
Therefore we choose parameters  $h \sim \sqrt{\delta}$,  $\tau \sim \delta^{1/5} $ and $\gamma \sim \delta^{4/5}$ for the numerical reconstruction of initial data,
while we let $h\sim \sqrt{\delta}$, {$\tau \sim \delta$ and $\gamma \sim \delta$} for approximately solving the solution $u(t_n)$ for some $t_n>0$.
The empirical convergence results show that $e_{\text{ini},f} \sim \delta^{\frac15}$ and $e_f^n \sim \delta$,
which are consistent with our theoretical findings.
Finally, in figure \ref{fig:sol:nonsmooth}, we provide the profiles
of solutions to semidiscrete and fully discrete schemes with different noise levels, which show clearly the convergence of the discrete
approximation as the noise level $\delta$ decreases.

\begin{figure}[htbp]
\centering
\begin{subfigure}{.33\textwidth}
\centering
\includegraphics[scale=0.3]{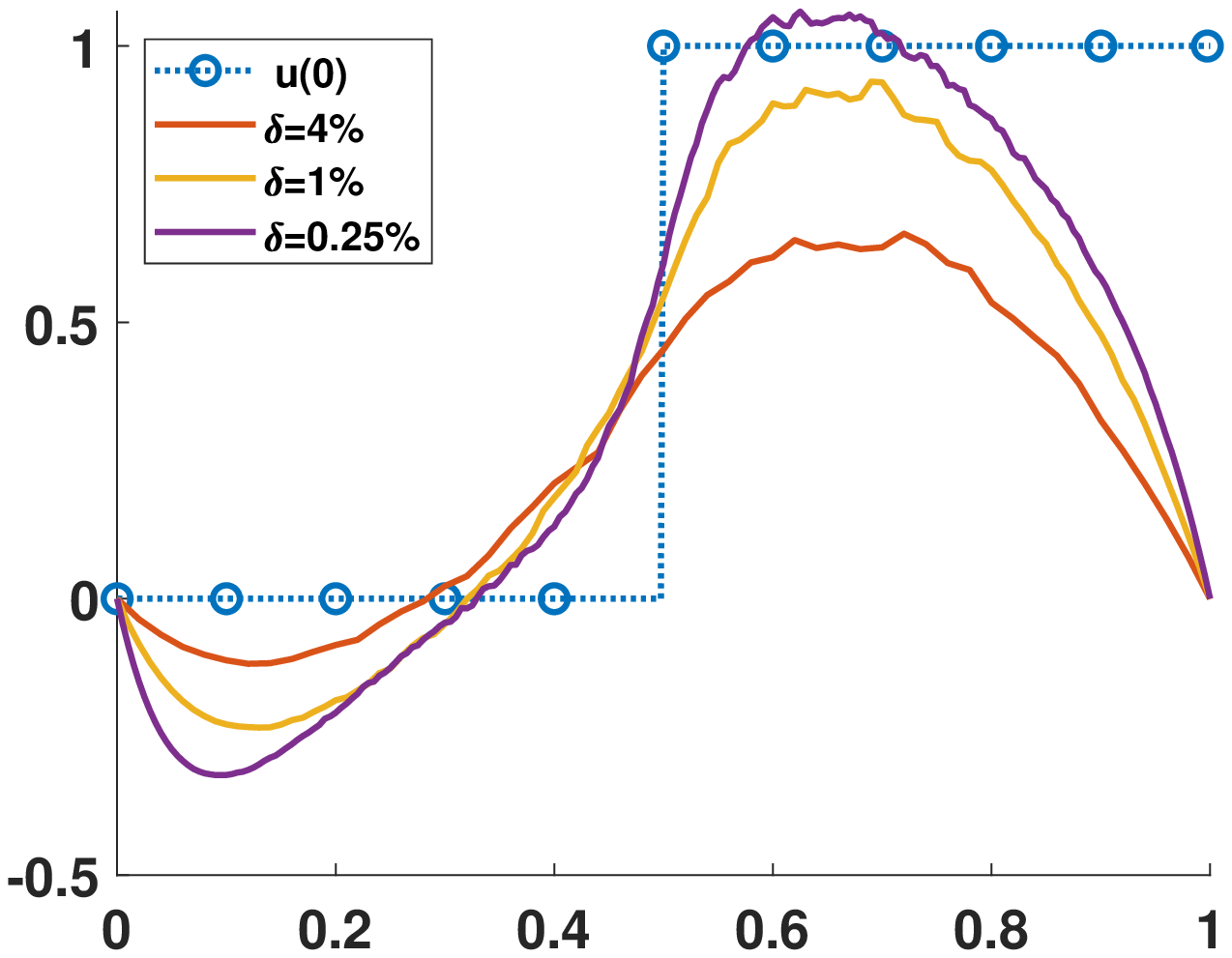}
\caption{$\tu^\delta_h(0)$}
\end{subfigure}%
\begin{subfigure}{.33\textwidth}
\centering
\includegraphics[scale=0.3]{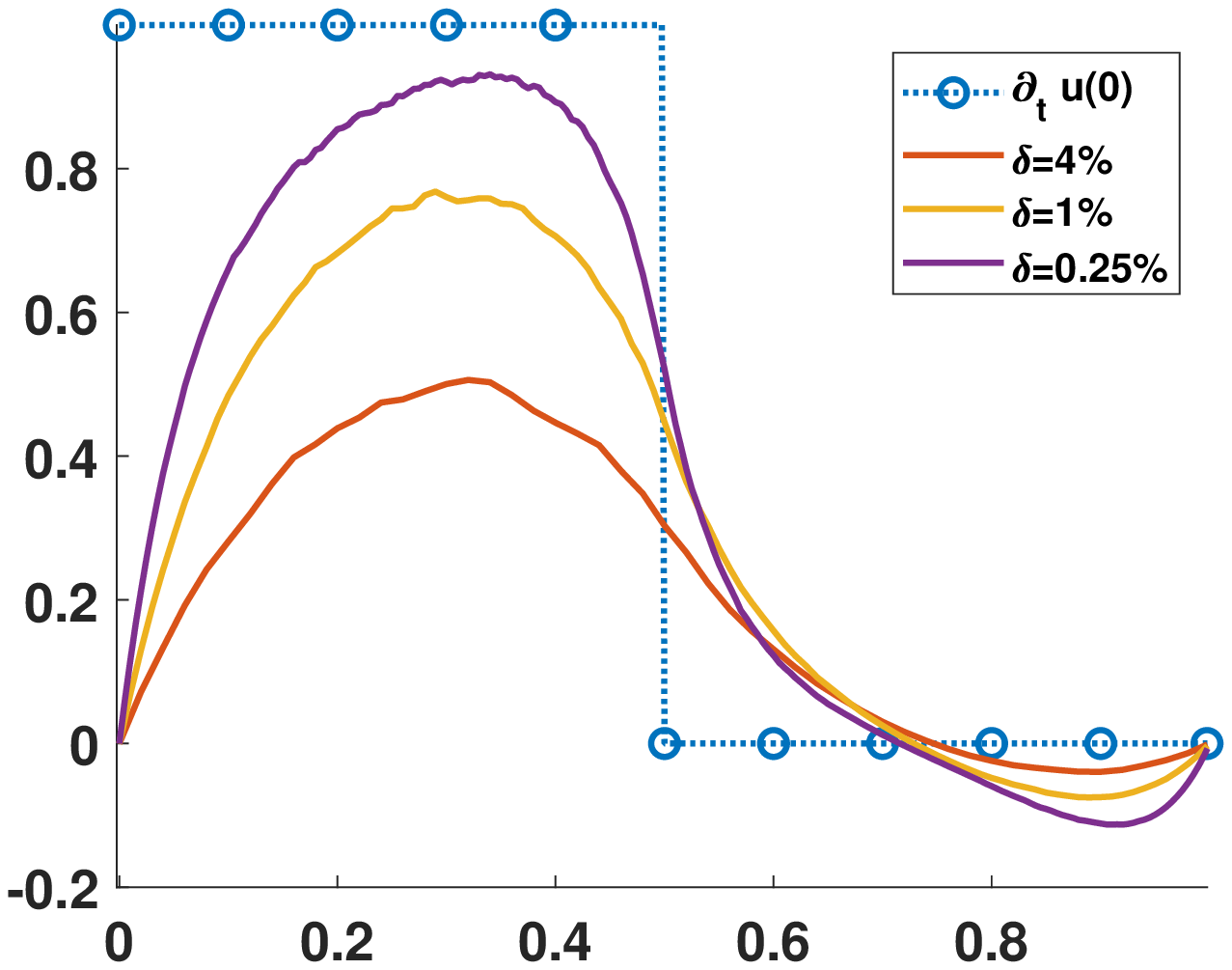}
\caption{$\partial_t \tu^\delta_h(0)$}
\end{subfigure}%
\begin{subfigure}{.33\textwidth}
\centering
\includegraphics[scale=0.3]{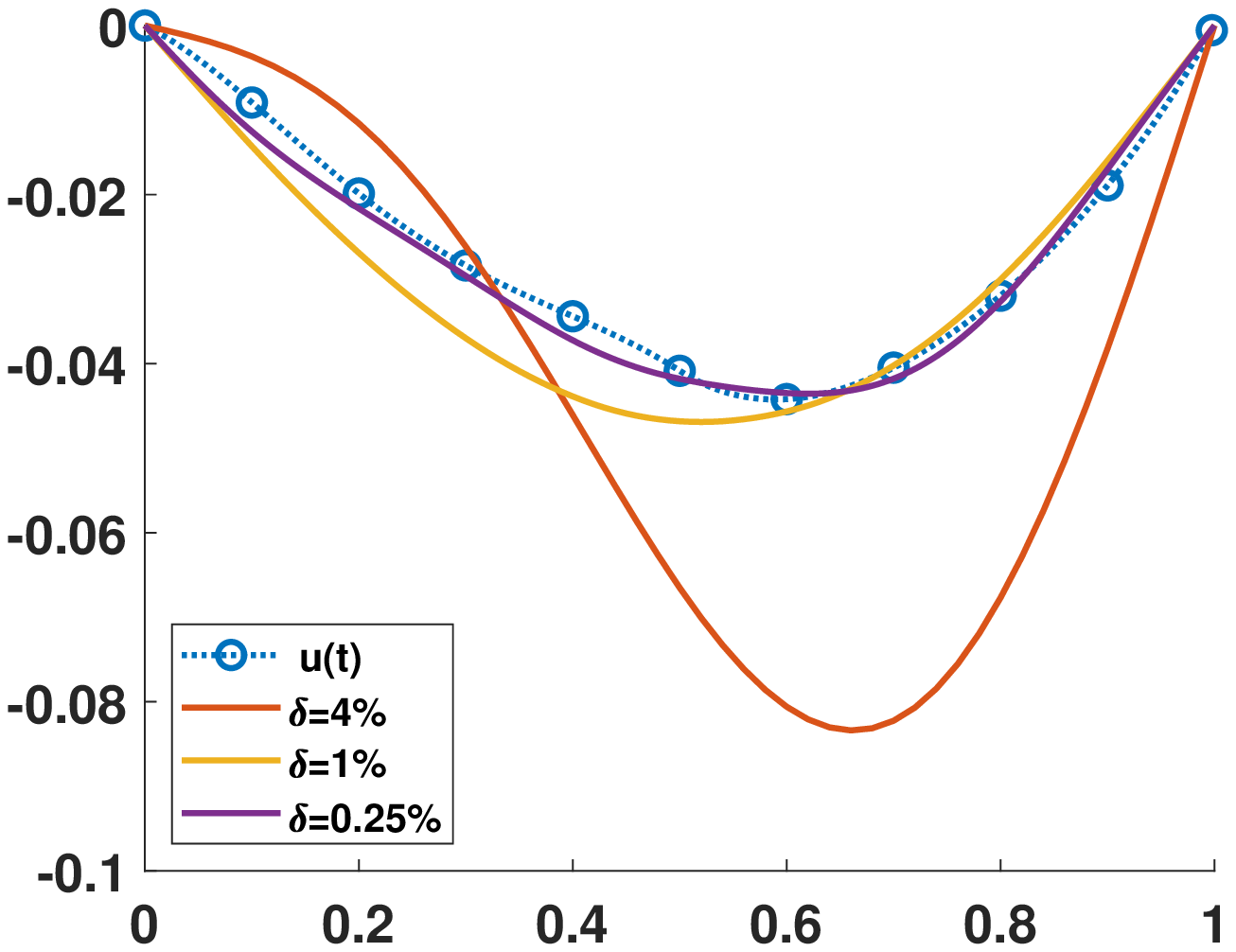}
\caption{$\tu^\delta_h(t)$ with $t=0.5$}
\end{subfigure}%
\newline
\begin{subfigure}{.33\textwidth}
\centering
\includegraphics[scale=0.3]{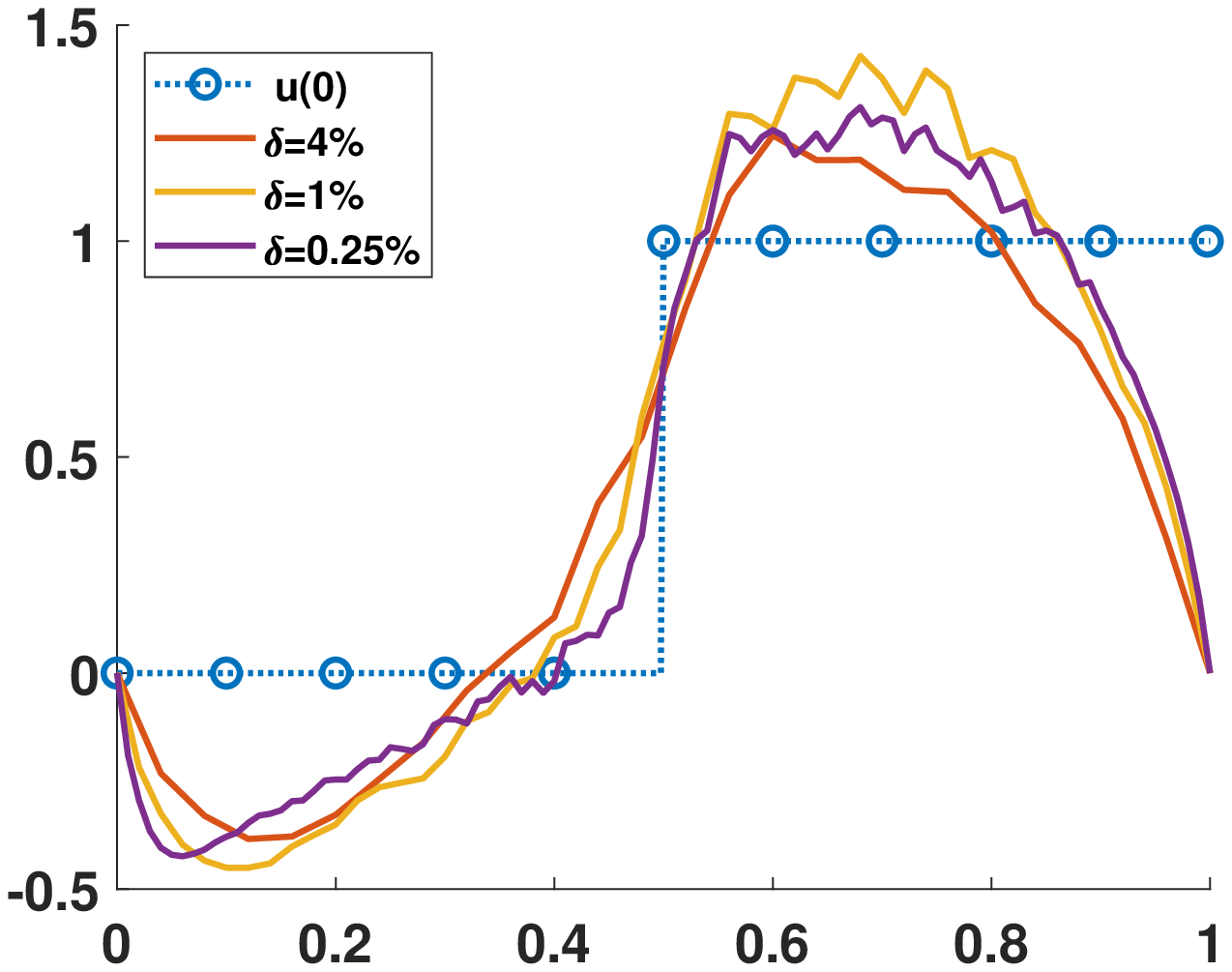}
\caption{$\tilde a_{h,\tau}^\delta$}
\end{subfigure}%
\begin{subfigure}{.33\textwidth}
\centering
\includegraphics[scale=0.3]{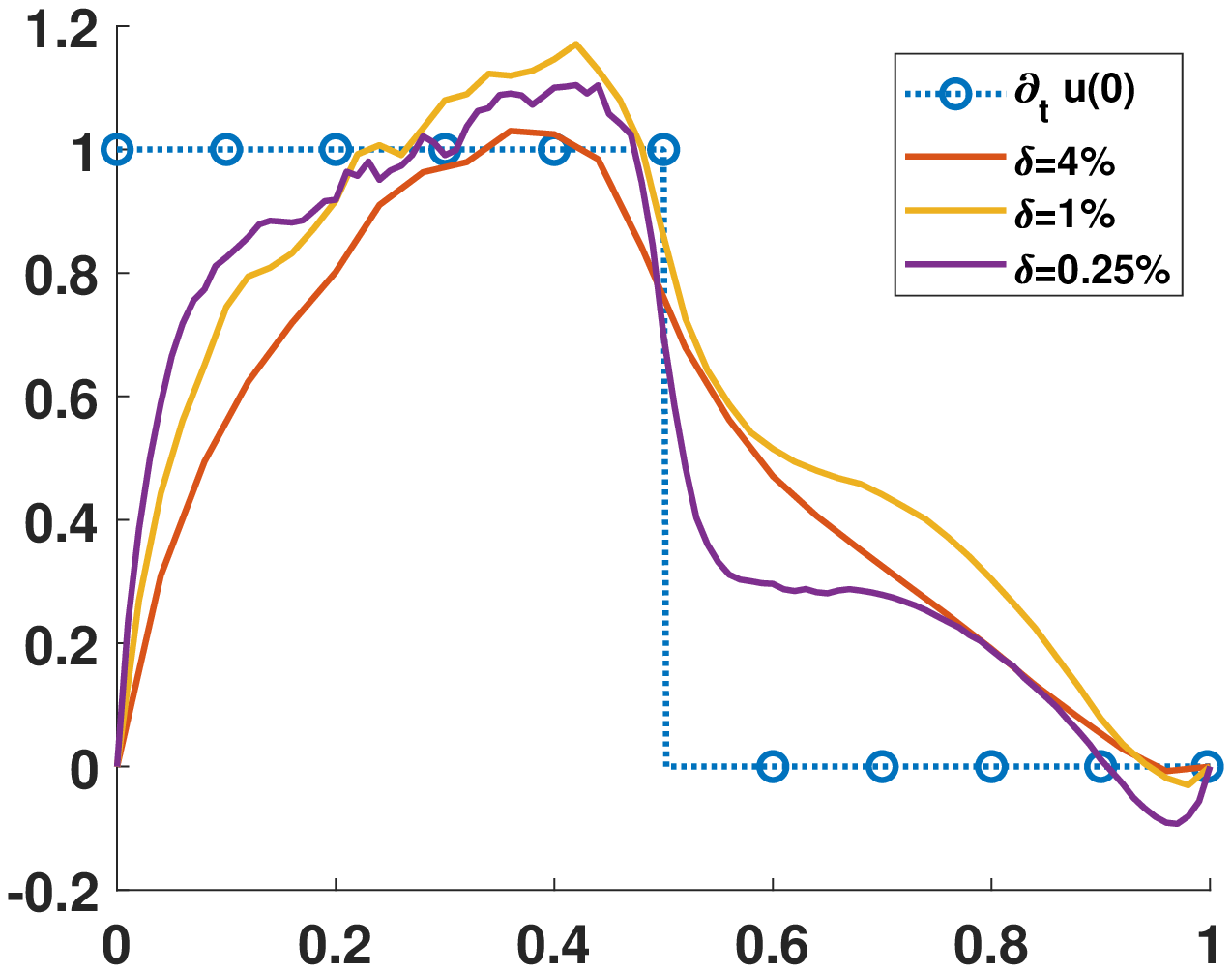}
\caption{$\tilde b_{h,\tau}^\delta$}
\end{subfigure}%
\begin{subfigure}{.33\textwidth}
\centering
\includegraphics[scale=0.3]{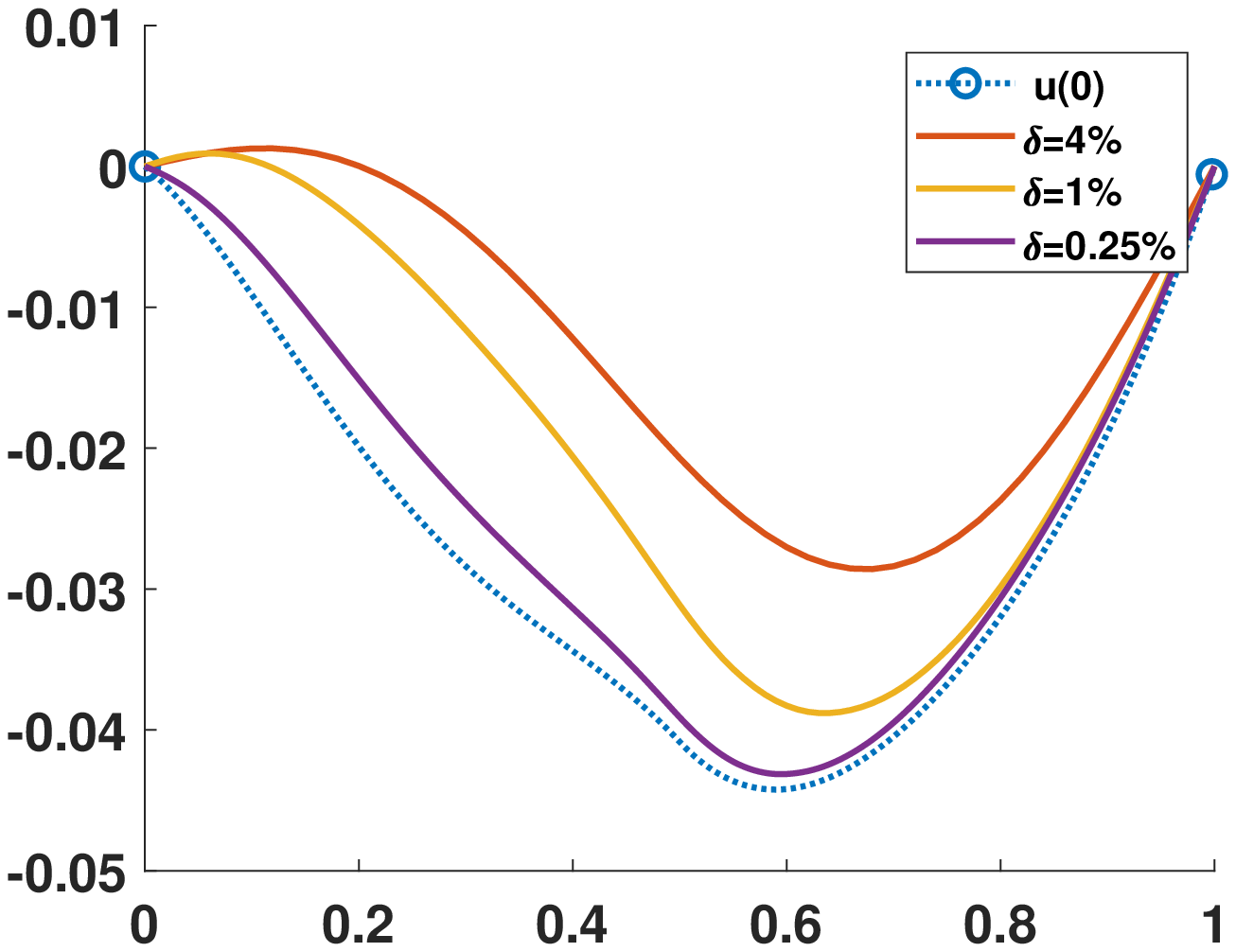}
\caption{$\tilde U_{n}^\delta$, with $t_n=0.5$}
\end{subfigure}%
\caption{Example(b): profiles of numerical solutions with $\alpha=1.5$ for $\delta = 4\%,1\%,0.25\%$.
First row: $h=\sqrt{\delta}/10$, $\gamma = \delta^{4/5}/5$ for both (a) and (b); $h=\sqrt{\delta}/10$, $\gamma = \delta/5$ for (c). Second row:
$h=\sqrt{\delta}/10$, $\tau = \delta^{1/5}/10$, $\gamma =\delta^{4/5}/15$ for both (d) and (e); $h=\sqrt{\delta}/10$,
$\tau = \delta$, $\gamma = \delta/10$ for (f).  }
\label{fig:sol:nonsmooth}
\end{figure}

 \paragraph{\bf Example (c): 2D  examples.} Finally, we test a two dimensional diffusion-wave models in $\Omega = (0,1)^2$
 with smooth initial conditions:
$$a(x,y)=\sin(2\pi x)\sin(2\pi y),\quad b(x,y) = 4x(1-x)y(1-y) \in \dH2 = H^2\II\cap H_0^1\II,$$
and source term $f\equiv0$.
The reference solution is computed with $h=1/150$, $\tau=1/1000$.
Noting that the fully discrete system is not symmetric,
we apply the biconjugate gradient stabilized method \cite{Vorst:1992}.

In Figure \ref{fig:2D:a} and \ref{fig:2D:b},
we plot profiles of (numerical) reconstruction of initial data $a$, $b$ and approximation errors,
with different noise level $\delta$ as well as different parameters $\gamma,h,\tau$ chosen according to $\delta$.
The empirical observations are in excellent agreement with
theoretical results, e.g., convergence as the noise level $\delta$ decreases
to zero.

\begin{figure}[htbp]
\begin{subfigure}{.24\textwidth}
\centering
\includegraphics[scale=0.25]{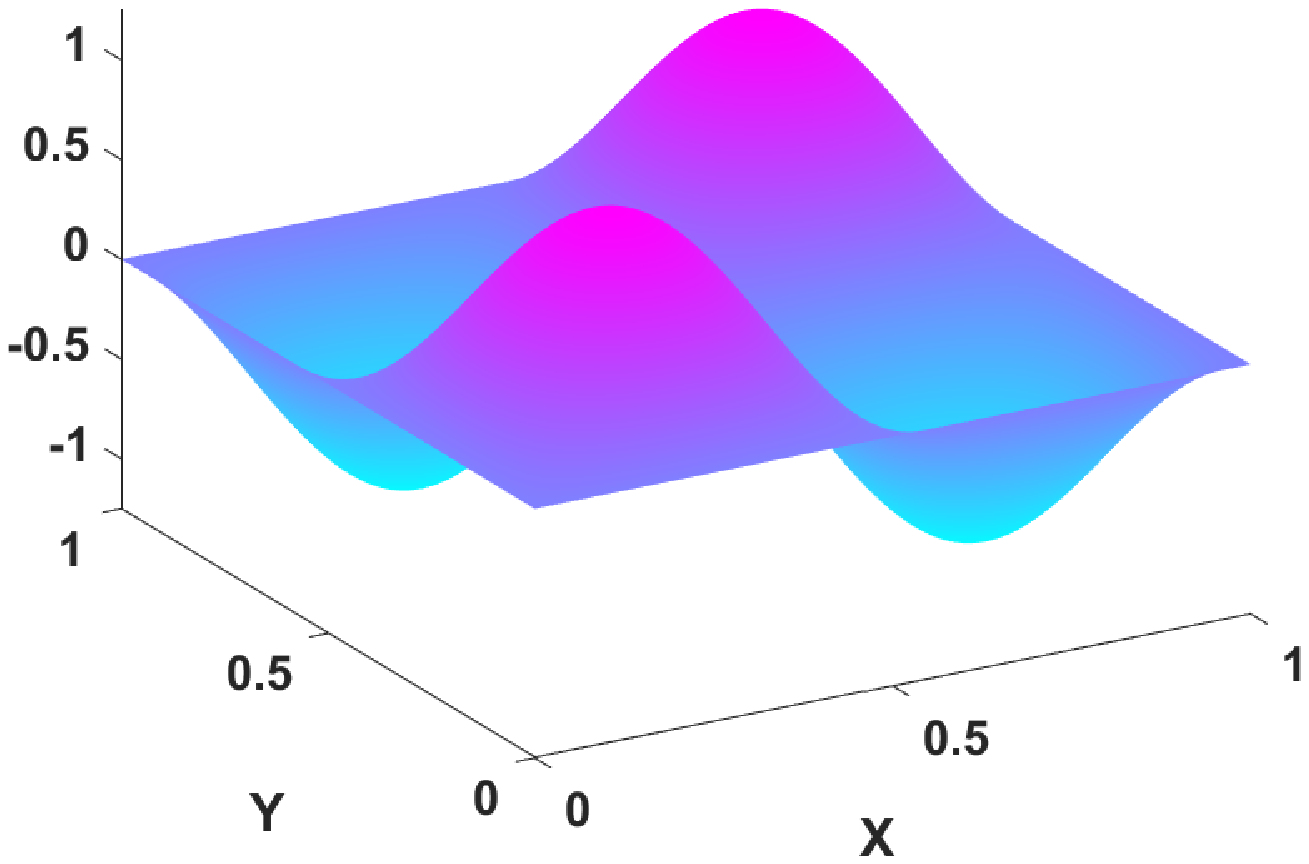}
\end{subfigure}%
\begin{subfigure}{.24\textwidth}
\centering
\includegraphics[scale=0.25]{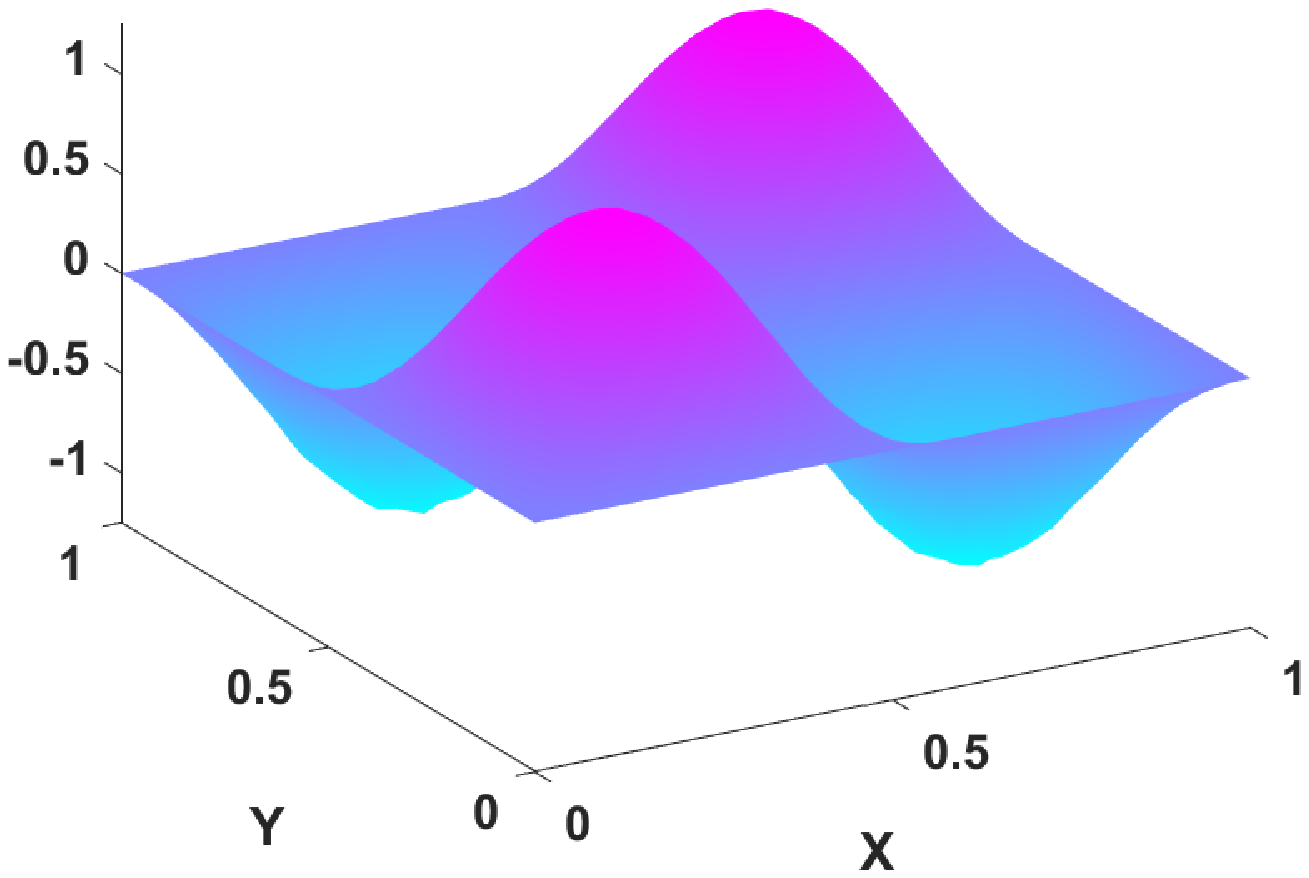}
\end{subfigure}%
\begin{subfigure}{.24\textwidth}
\centering
\includegraphics[scale=0.25]{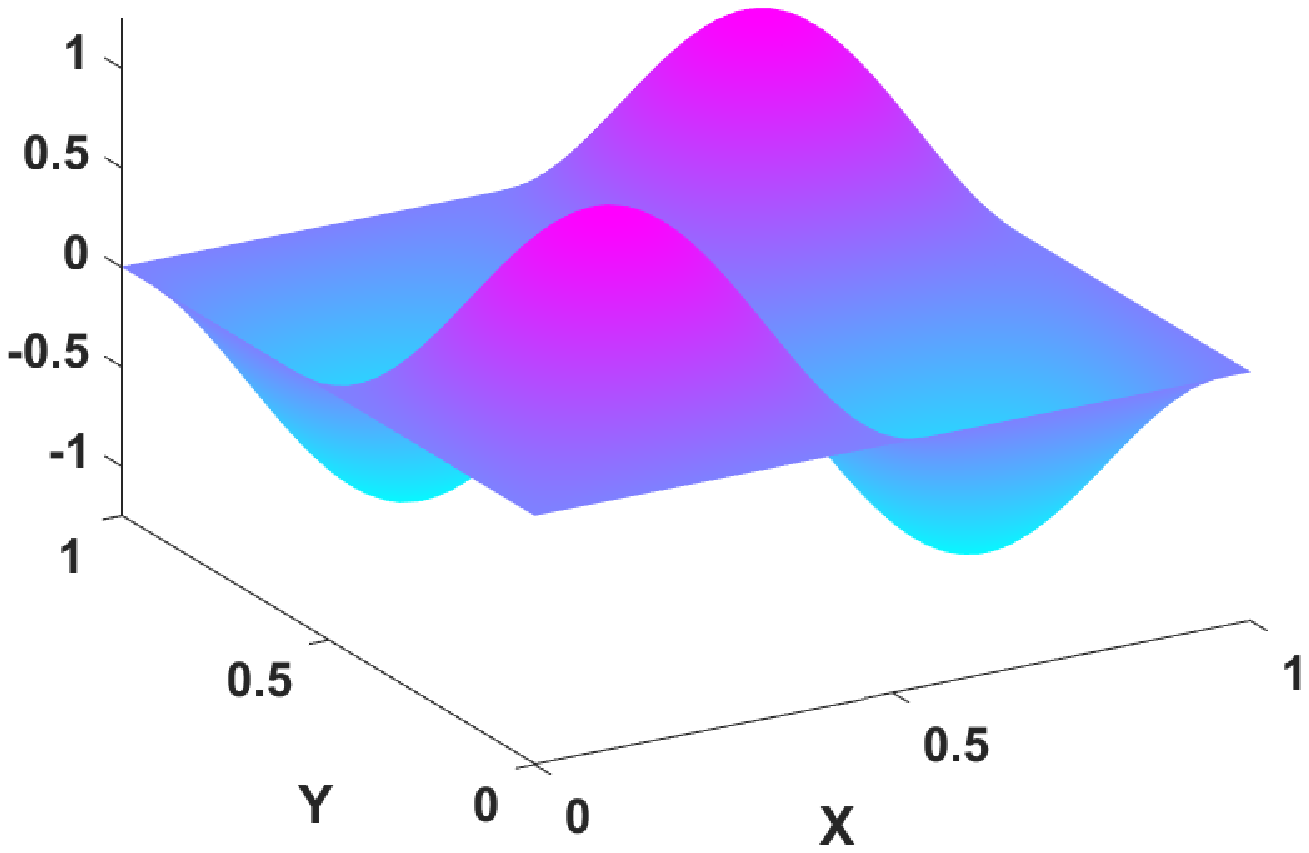}
\end{subfigure}
\begin{subfigure}{.24\textwidth}
\centering
\includegraphics[scale=0.25]{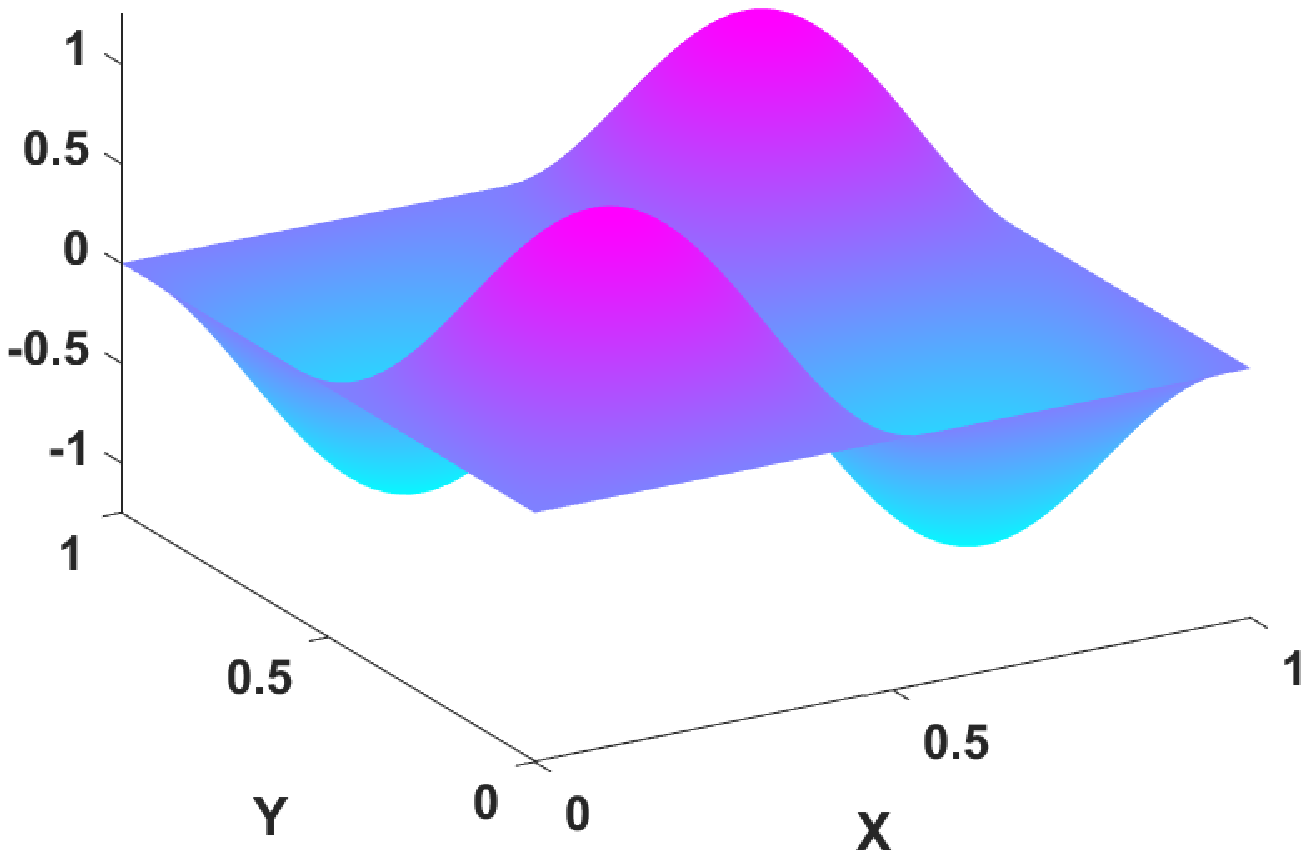}
x
\end{subfigure}
\newline
\raggedleft
\begin{subfigure}{.24\textwidth}
\centering
\includegraphics[scale=0.25]{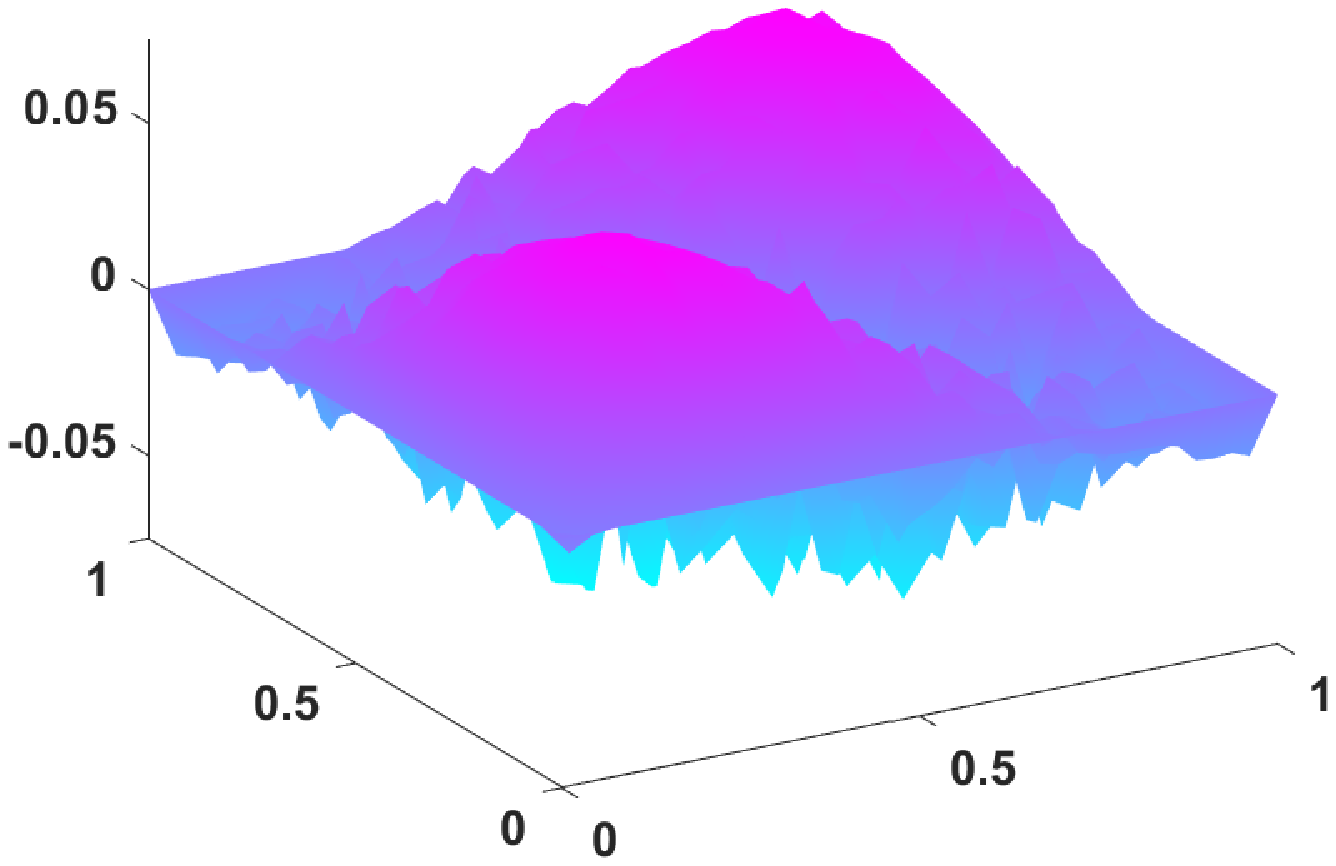}
\caption{$\delta =1e-2$.}
\end{subfigure}%
\begin{subfigure}{.24\textwidth}
\centering
\includegraphics[scale=0.25]{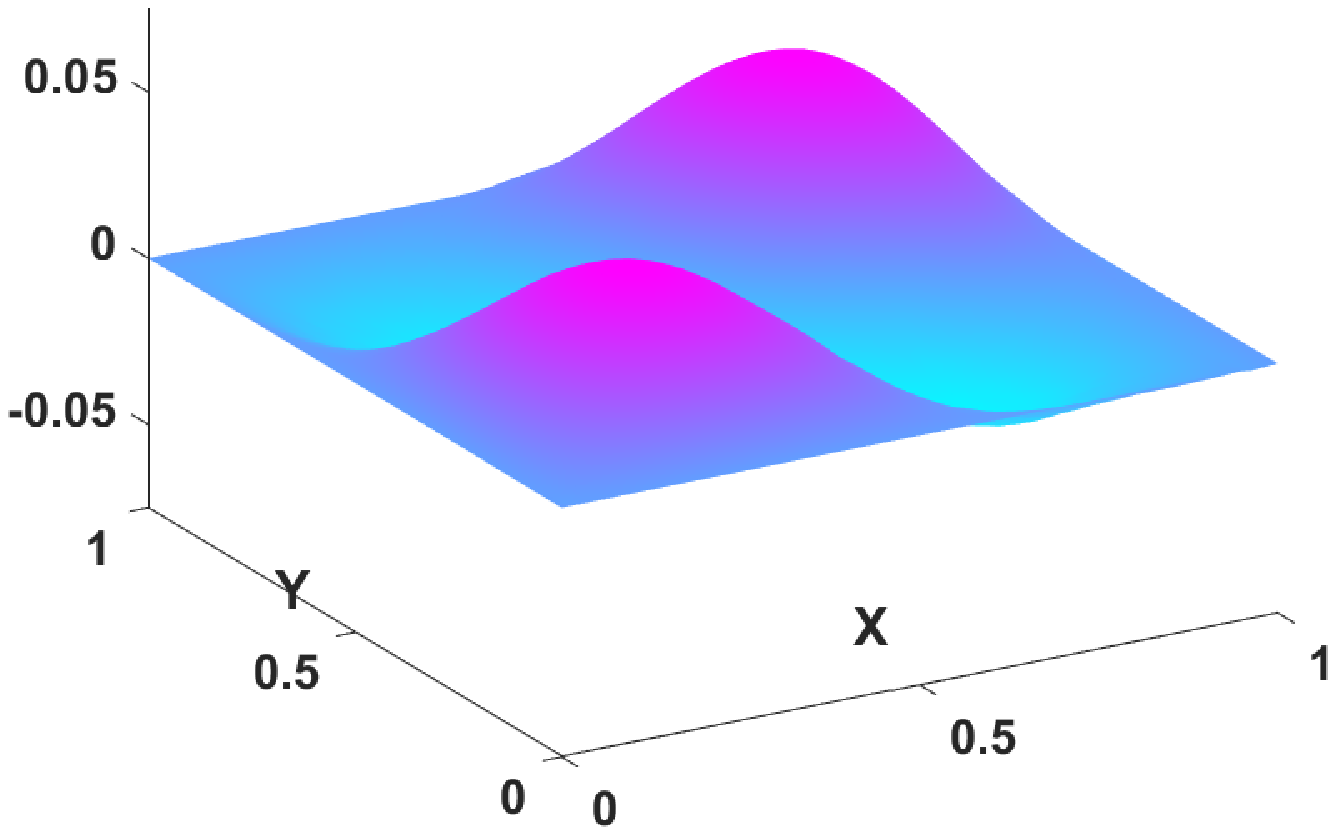}
\caption{$\delta = 5e-3$.}
\end{subfigure}
\begin{subfigure}{.24\textwidth}
\centering
\includegraphics[scale=0.25]{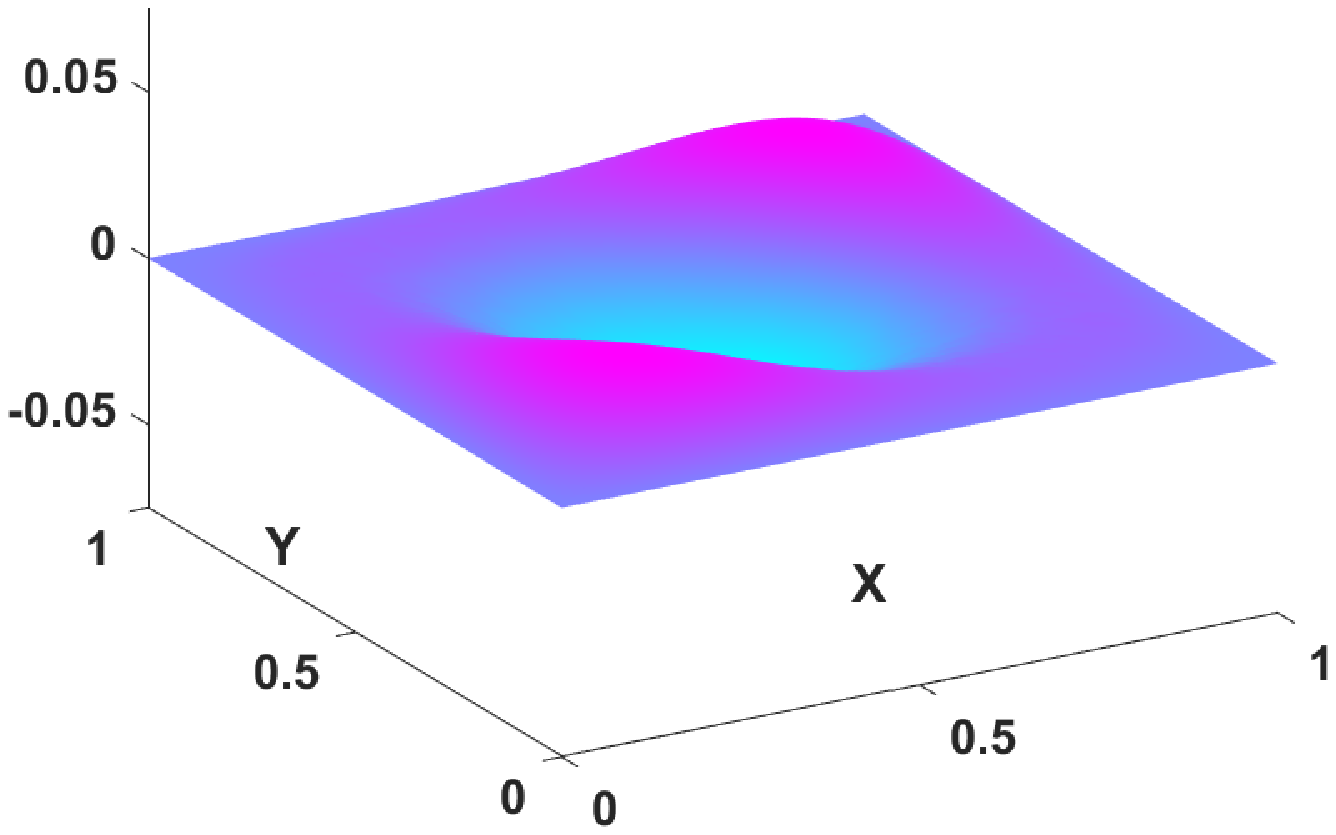}
\caption{$\delta= 2.5e-3$.}
\end{subfigure}
\caption{Example(c): Top left: Exact initial data $a$. The remain three columns are profiles of numerical reconstructions $a_{h,\tau}^\delta$ and theirs errors, with $h=\sqrt{\delta}/4$, $\tau = \sqrt{\delta}/20$, $\gamma=\sqrt{\delta}/4000$.}
\label{fig:2D:a}
\end{figure}
\begin{figure}[htbp]
\begin{subfigure}{.24\textwidth}
\centering
\includegraphics[scale=0.25]{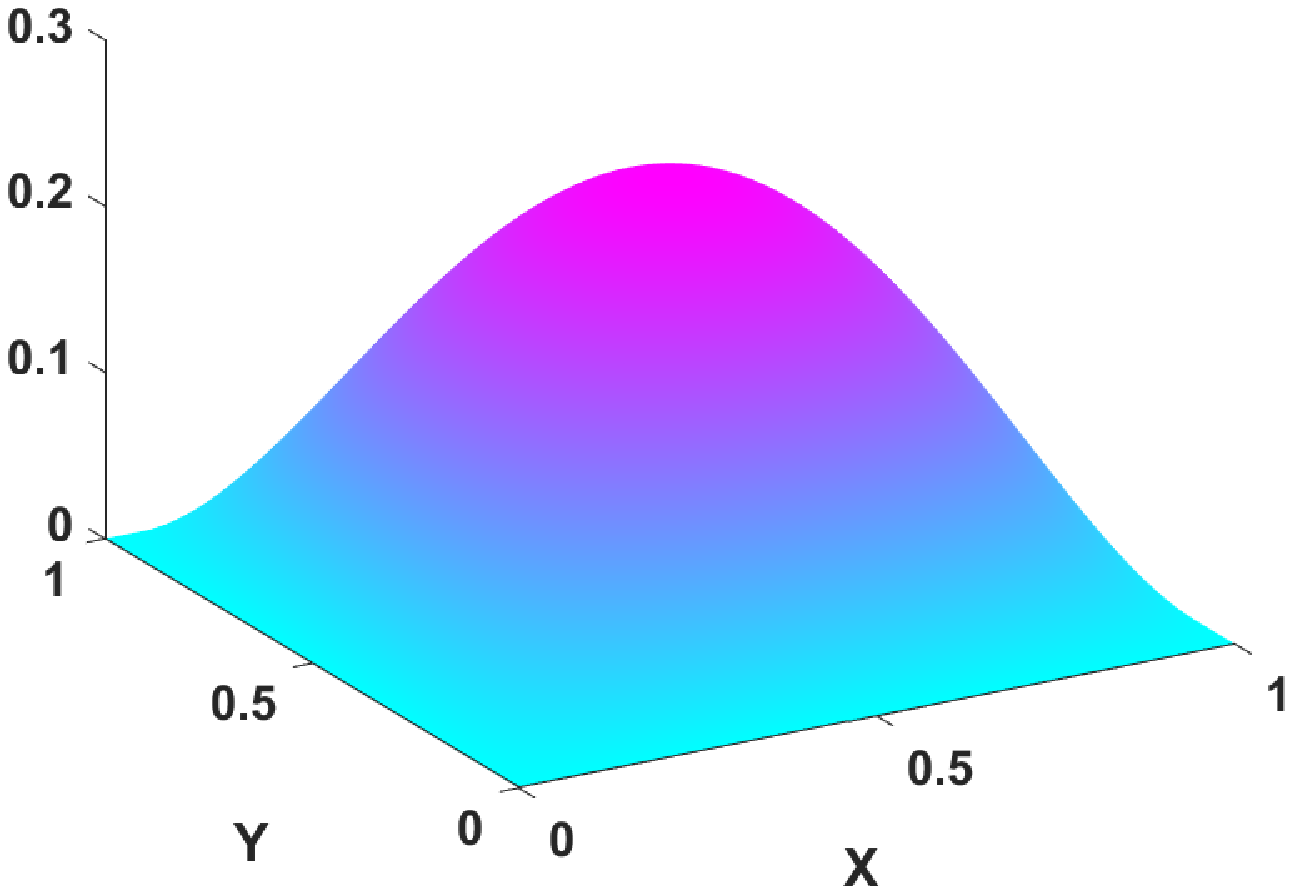}
\end{subfigure}%
\begin{subfigure}{.24\textwidth}
\centering
\includegraphics[scale=0.25]{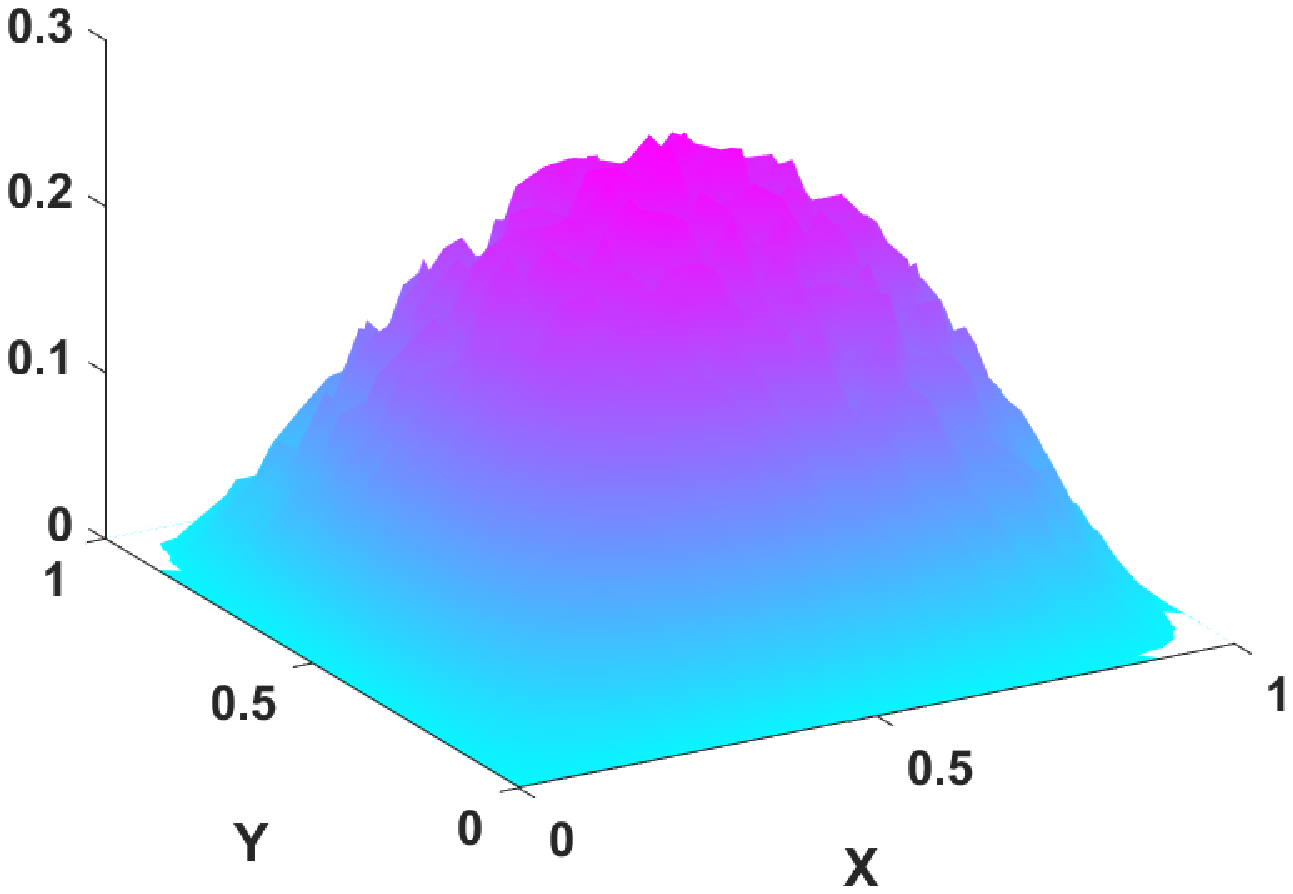}
\end{subfigure}%
\begin{subfigure}{.24\textwidth}
\centering
\includegraphics[scale=0.25]{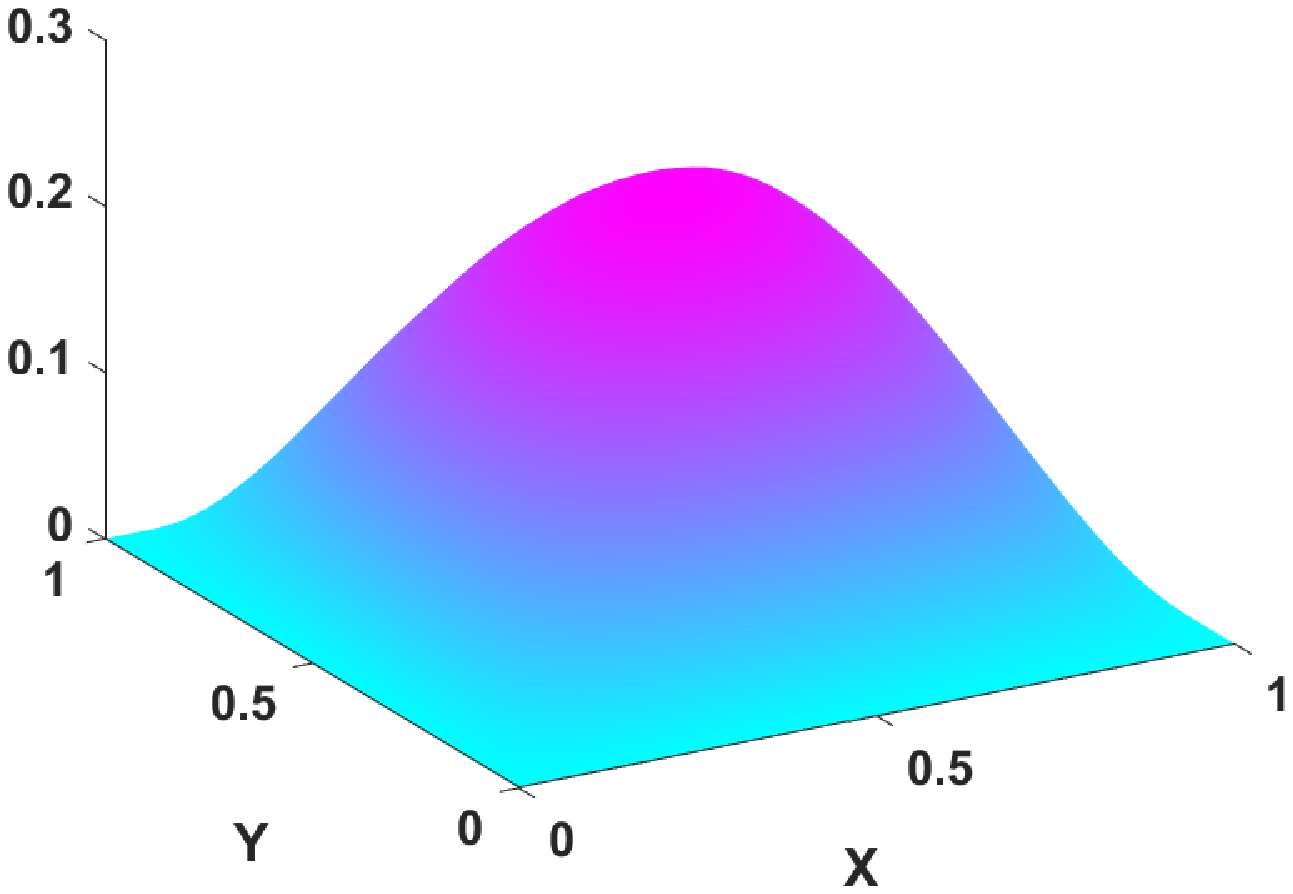}
\end{subfigure}
\begin{subfigure}{.24\textwidth}
\centering
\includegraphics[scale=0.25]{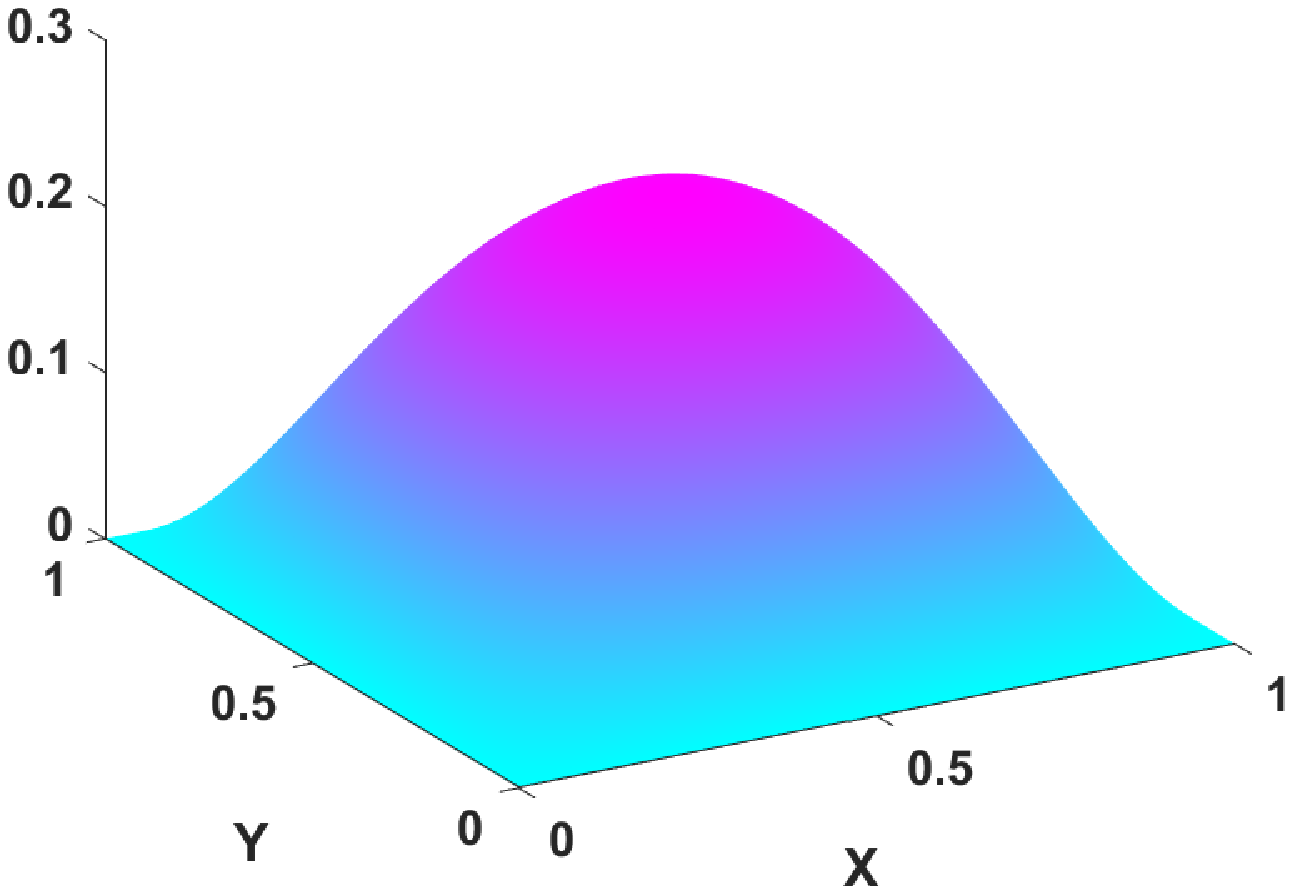}
x
\end{subfigure}
\newline
\raggedleft
\begin{subfigure}{.24\textwidth}
\centering
\includegraphics[scale=0.25]{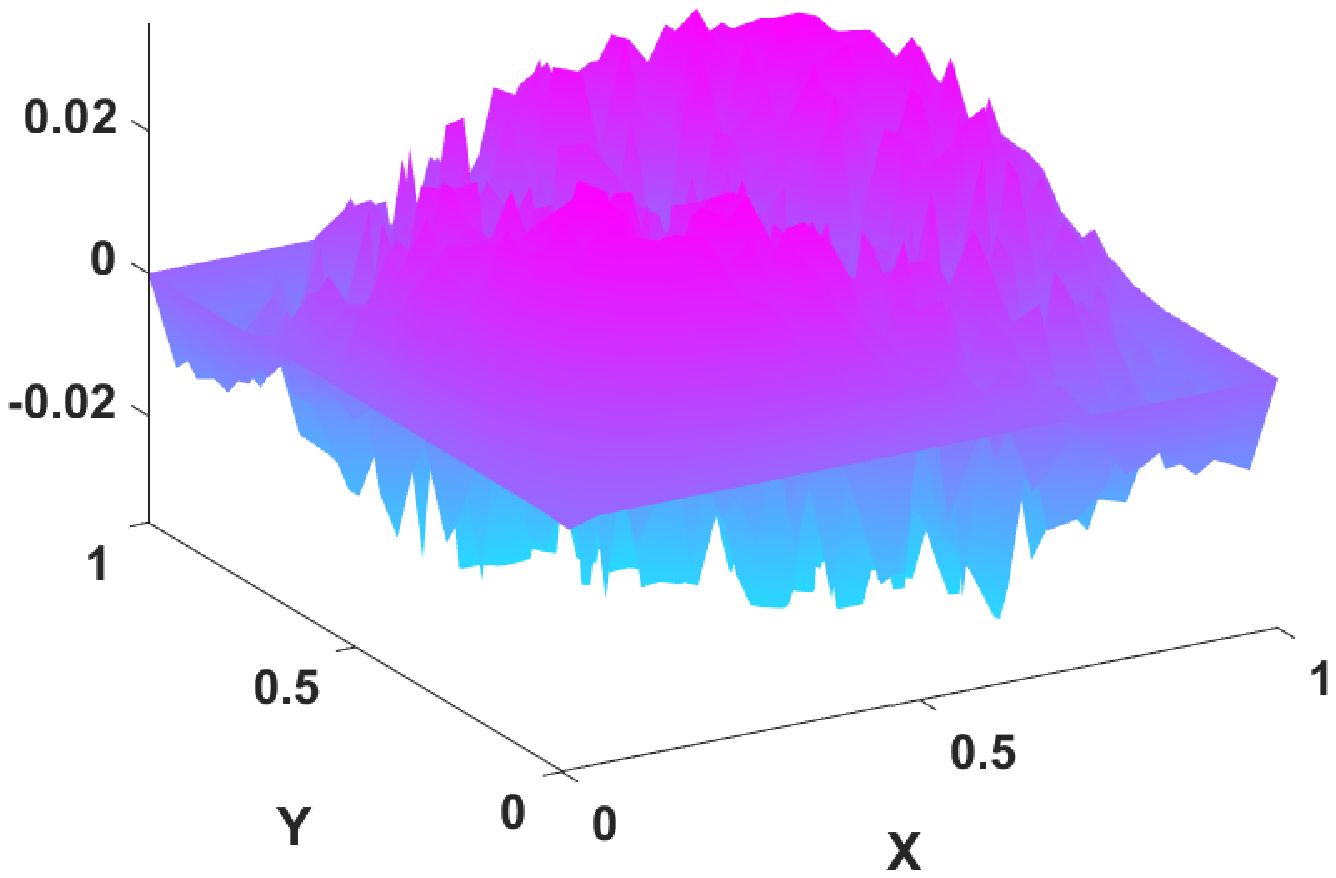}
\caption{$\delta =1e-2$.}
\end{subfigure}%
\begin{subfigure}{.24\textwidth}
\centering
\includegraphics[scale=0.25]{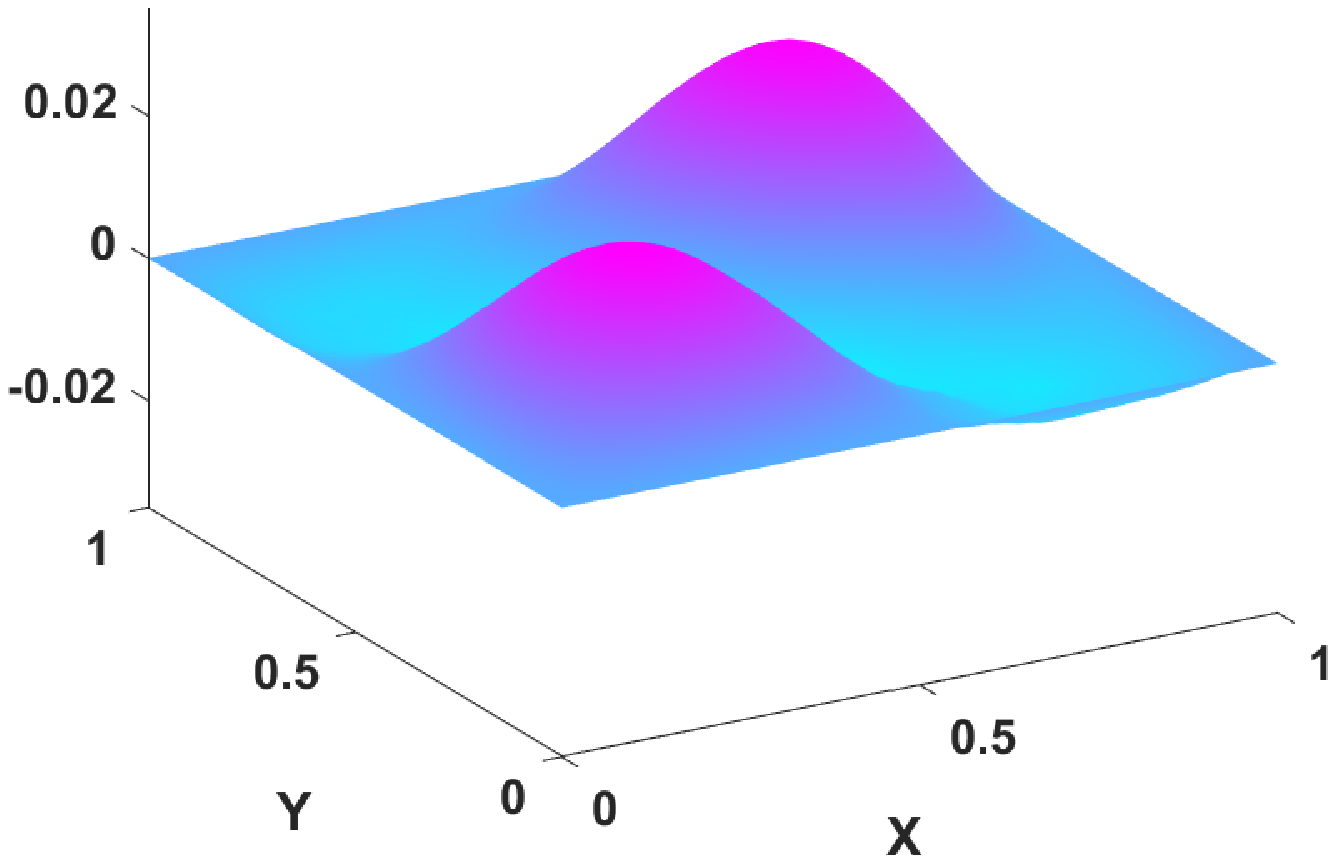}
\caption{$\delta = 5e-3$.}
\end{subfigure}
\begin{subfigure}{.24\textwidth}
\centering
\includegraphics[scale=0.25]{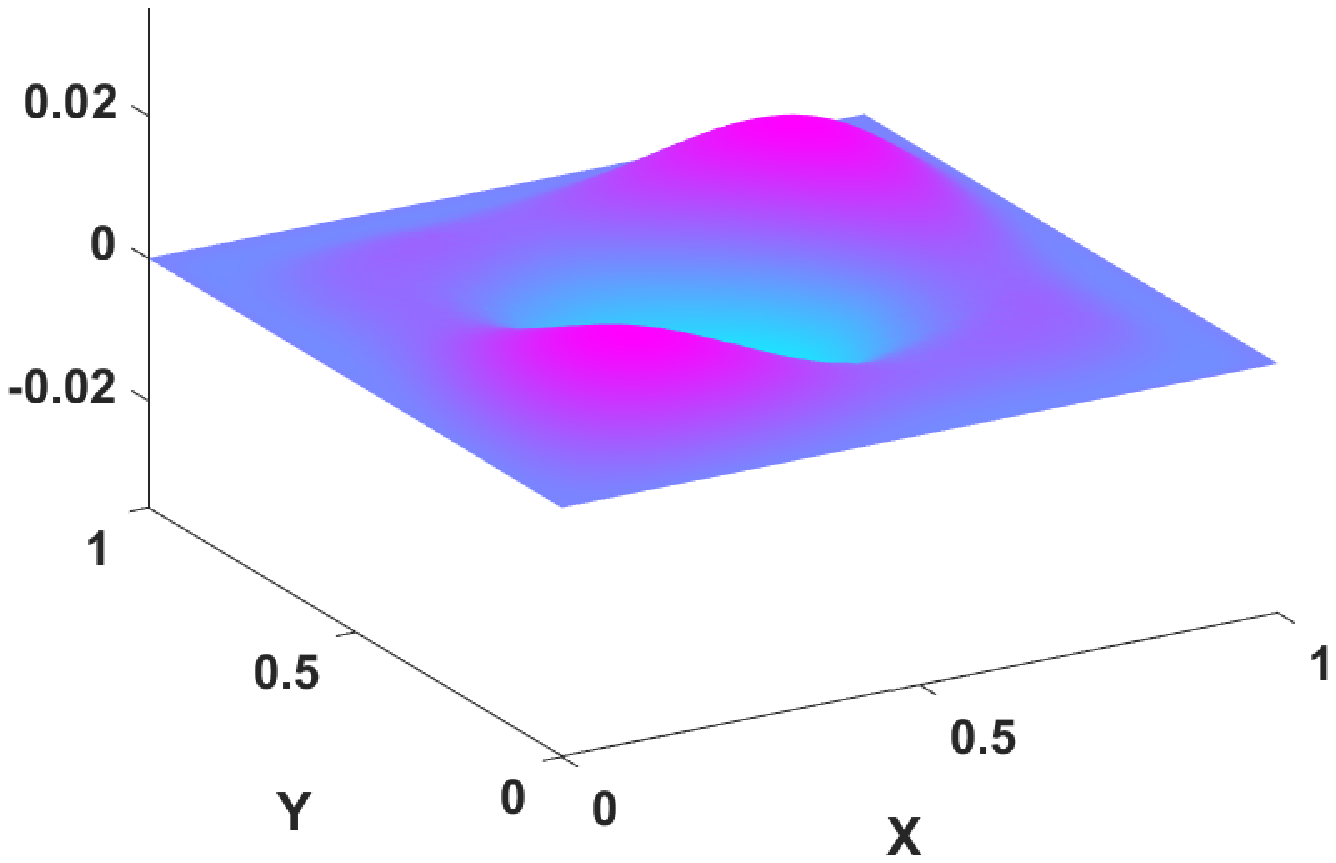}
\caption{$\delta= 2.5e-3$.}
\end{subfigure}
\caption{Example(c): Top left: Exact initial data $b$. The remain three columns are profiles of numerical reconstructions $b_{h,\tau}^\delta$ and their errors, with $h=\sqrt{\delta}/4$, $\tau = \sqrt{\delta}/20$, $\gamma=\sqrt{\delta}/4000$.}
\label{fig:2D:b}
\end{figure}

\section{Concluding remarks}
In this paper, we study the backward diffusion-wave problem,
involving a fractional derivative in time with order $\alpha\in(1,2)$.
From two terminal observations $u(T_1)$ and $u(T_2)$,
we simultaneously determine two initial data $u(0)$ and $u_t(0)$, as well as the solution $u(t)$ for all $t > 0$.
The existence, uniqueness and Lipschitz stability of the backward diffusion-wave problem are theoretically examined
under some mild conditions on $T_1$ and $T_2$.
Then, in case of noisy observations, we apply
quasi-boundary value method to regularize the "mildly" ill-posed problem,
and show the convergence of the regularized solution.
Moreover, in order to numerically solve the regularized problem, we proposed a fully discrete scheme
by using finite element method in space and convolution quadrature in time.
Sharp error bounds of the fully discrete scheme are established in both cases of smooth and nonsmooth data.
Numerical experiments fully support our theoretical findings.

Some interesting questions are still open. First of all, we are interested in the fractional evolution model with time-dependent coefficient, e.g.
\begin{equation} \label{eqn:fde-t}
\partial_t^\alpha u (x,t)+ \nabla \cdot(a(x,t)\nabla u) = f(x,t).
\end{equation}
The current analysis heavily relies on the decay properties of Mittag--Leffler functions, or equivalently the smoothing properties
of solution operators. This stratergy is not directly applicable to the model \eqref{eqn:fde-t}. The direct problem for subdiffusion ($\alpha\in(0,1)$)
and its numerical approximation have been studied in \cite{JinLiZhou:var} by using a perturbation argument.
However,  the backward problem is still unclear and requires some novel approaches.
Besides, we are interested in the backward problem with additional missing information. For example,
 the inverse source problems, determining source term $f(x)$ and fractional order $\alpha$ from terminal observation $u(T)$,
were studied in in \cite{Janno:2018, LiaoWei:2019}. The argument could be extend to the backward problem,
but the error analysis of numerical approximation seems more technical.

\appendix

\section{Proof of Lemma \ref{lem:fully-approx}}

\begin{proof}
{The estimate for $E_{\alpha,1}(-\lambda t_n^\alpha)-F_\tau^n(\lambda)$ follows from the same argument in the proof of \cite[Lemma 4.2]{ZhangZhou:2020}.
Then it suffices to establish a bound for  $t_n E_{\alpha,2}(-\lambda t^\alpha)-\bar F_\tau^n(\lambda)$,
we recall representations \eqref{eqn:FE-LAP} and \eqref{eqn:FE_ht} and derive
\begin{equation*}
\begin{aligned}
|t_nE_{\alpha,2}(-\lambda t_n^\alpha)-\bar{F}_\tau^n(\lambda)| &\le \left|\frac{1}{2\pi i} \int_{\contour\backslash{\Gamma_{\theta,\sigma}^\tau }} e^{zt_n} z^{\alpha-2}(z^\alpha+\lambda)^{-1}dz\right|\\
&\quad +\left|\frac{1}{2\pi i} \int_{\Gamma_{\theta,\sigma}^\tau } e^{zt_n}(z^{\alpha-2}(z^\alpha+\lambda)^{-1}- e^{-z\tau}\delta_\tau(e^{-z\tau})^{\alpha-2} (\delta_\tau(e^{-z\tau})^{\alpha}+\lambda)^{-1} dz\right|\\
&:= I_1+I_2.
\end{aligned}
\end{equation*}
With $\sigma=t_n^{-1}$, the bound for  $I_1$ follows from the direct computation
\begin{equation*}
\begin{aligned}
I_1&\le c\int_{\contour\backslash{\Gamma_{\theta,\sigma}^\tau }} |e^{zt_n}| | z^{\alpha-2}| |(z^\alpha+\lambda)^{-1}| |\d z|
\le c\int_{\pi/(\tau \sin\theta)}^\infty \frac{e^{\rho(\cos\theta)t_n} \rho^{\alpha-2}}{\rho^\alpha} \d\rho\\
&\le ct_n \int_{cn}^\infty e^{-c\rho} \rho^{-2} \d\rho \le ct_n n^{-1}
\end{aligned}
\end{equation*}
and
\begin{equation*}
\begin{aligned}
I_1
&\le c\int_{\pi/(\tau \sin\theta)}^\infty \frac{e^{\rho(\cos\theta)t_n} \rho^{\alpha-2}}{\lambda} \d \rho
\le ct_n (\lambda t_n^\alpha)^{-1} \int_{cn}^\infty e^{-c\rho} \rho^{\alpha-2} d\rho \\
&\le ct_n (\lambda t_n^\alpha)^{-1} n^{-1}\int_{cn}^\infty e^{-c\rho} \rho^{\alpha-1} d\rho
\le ct_n (\lambda t_n^\alpha)^{-1} n^{-1}.
\end{aligned}
\end{equation*}
As a result, we obtain
$
I_1 \le   \frac{c n^{-1}}{(1+\lambda t_n^\alpha)} t_n.
$
Next we turn to  the term $I_2$. According to Lemma \ref{lem:delta}, we have for all $z\in {\Gamma_{\theta,\sigma}^\tau }$,
\begin{equation*}
\begin{aligned}
\left| \frac{z^{\alpha-2}}{z^\alpha+\lambda}- \frac{e^{-z\tau}\delta_\tau(e^{-z\tau})^{\alpha-2}}{\delta_\tau(e^{-z\tau})^{\alpha}+\lambda}\right|
&\le c\tau \frac{|z|^{\alpha-1}}{|z^\alpha+\lambda|}
\end{aligned}
\end{equation*}
Therefore, with $\sigma = t_n^{-1}$, the term $I_2$ can be bounded as
\begin{equation*}
\begin{aligned}
I_2 &\le c\tau \int_{\Gamma_{\theta,\sigma}^\tau }|e^{zt_n}| \frac{ |z|^{\alpha-1}}{|z^\alpha+\lambda|} |dz|\le c\tau\lambda^{-1}(\int_\sigma^\infty e^{\rho\cos\theta t_n}\rho^{\alpha-1}d\rho +\sigma^{\alpha}\int_{-\theta}^\theta d\psi)
\le c\tau(\lambda t_n^{\alpha})^{-1}
\end{aligned}
\end{equation*}
and
\begin{equation*}
\begin{aligned}
I_2&\le c\tau\int_{\Gamma_{\theta,\sigma}^\tau } |e^{zt_n} | |z|^{-1} |dz|\le c\tau (\int_1^\infty e^{\rho\cos\theta}\rho^{-1}d\rho+\int_{-\theta}^\theta  d\psi)\le c\tau.
\end{aligned}
\end{equation*}
Then \eqref{eqn:es-01} follows immediately.

For the second estimate, we note that
\begin{equation*}
\begin{aligned}
t_nE_{\alpha,2}(-\lambda t_n^\alpha) &=   t_n-\frac{\lambda}{2\pi i}\int_\contour e^{zt_n}z^{-2}(z^\alpha+\lambda)^{-1}dz,\\
\bar{F}_\tau^n(\lambda) &= t_n-\frac{\lambda}{2\pi i }\int_{\Gamma_{\theta,\sigma}^\tau } e^{zt_n} e^{-z\tau} \delta_\tau(e^{-z\tau})^{-2}	(\delta_\tau(e^{-z\tau})^{\alpha}+\lambda)^{-1}dz,
\end{aligned}
\end{equation*}
with $n\ge 1$. Then we use the spliiting
\begin{equation*}
\begin{aligned}
\lambda^{-1}|t_nE_{\alpha,2}(-\lambda t_n^\alpha)-\bar{F}_\tau^n(\lambda)|&\le \left|\frac{1}{2\pi i} \int_{\contour\backslash{\Gamma_{\theta,\sigma}^\tau }} e^{zt_n} z^{-2}(z^\alpha+\lambda)^{-1} dz\right|\\
& + \left|\frac{1}{2\pi i}\int_{{\Gamma_{\theta,\sigma}^\tau }} e^{zt_n}[z^{-2}(z^\alpha+\lambda)^{-1} - e^{-z\tau} \delta_\tau(e^{-z\tau})^{-2}(\delta_\tau(e^{-z\tau})^{\alpha}+\lambda)^{-1}]dz\right|\\
&: = I_1+I_2.
\end{aligned}
\end{equation*}
According to Lemma \ref{lem:delta} we have for all $z\in\Gamma_{\theta,\sigma}^\tau$ ,
\begin{equation*}
\begin{aligned}
I_1
& \le c \int_{\contour\backslash{\Gamma_{\theta,\sigma}^\tau}}|e^{zt_n}| |z|^{-\alpha-2} |dz |\le c\int_{\pi/(\tau\sin\theta)} e^{\rho\cos\theta t_n} \rho^{-\alpha-2}d\rho \\
& \le ct_n^{\alpha+1} \int_{cn}^\infty e^{-c\rho} \rho^{-\alpha-2}d\rho \le ct_n^{\alpha+1} n^{-3} \int_0^\infty e^{-c\rho}\rho^{-\alpha+1 }d\rho \le ct_n^{\alpha-2} \tau^3.
\end{aligned}
\end{equation*}
And also we have
\begin{equation*}
|z^{-2}(z^\alpha+\lambda)^{-1} - e^{-z\tau} \delta_\tau(e^{-z\tau})^{-2}(\delta_\tau(e^{-z\tau})^{\alpha}+\lambda)^{-1}| \le c\tau |z|^{-\alpha-1},
\end{equation*}
and therefore with $\sigma = t_n^{-1}$, we have the bound for $n\geq 1$
\begin{equation*}
\begin{aligned}
I_2 &\le c\tau \int_{\Gamma_{\theta,\sigma}^\tau}|e^{zt_n}| |z|^{-\alpha-1} |dz| \le c\tau\left(\int_\sigma^\infty e^{-c\rho t_n} \rho^{-\alpha-1}d\rho +\sigma^{-\alpha} \int_{-\theta}^\theta d\psi\right)  \le c\tau t_n^\alpha .
\end{aligned}
\end{equation*}
This completes the proof of \eqref{eqn:es-02}.
}
\end{proof}

\bibliographystyle{abbrv}

\end{document}